\newtheorem{theorem}{Theorem}[section]
\newtheorem{lemma}[theorem]{Lemma}
\newtheorem{proposition}[theorem]{Proposition}
\newtheorem{remark}[theorem]{Remark}
\theoremstyle{definition}
\newtheorem{example}[theorem]{Example}
\numberwithin{equation}{section}
\newcommand{\remove}[1]{}
\theoremstyle{remark}
\newcommand{\uhat}{\underaccent{\check}}
\newcommand{\ds} {\displaystyle}
\newcommand{\e}{\epsilon}
\newcommand{\al} {\alpha}
\newcommand{\de} {\delta}
\newcommand{\Om} {\Omega}
\newcommand{\ra} {\rightarrow}
\newcommand{\De} {\Delta}
\newcommand{\la} {\lambda}
\newcommand{\La} {\Lambda}
\newcommand{\noi} {\noindent}
\newcommand{\na} {\nabla}
\newcommand{\mb} {\mathbb}
\newcommand{\mc} {\mathcal}
\begin{document}
	
	\title[Quasilinear Schr\"odinger-Choquard equation]{Quasilinear Schr\"odinger equations with Stein-Weiss type  convolution and critical exponential nonlinearity in $\mathbb R^N$}
	\author[Reshmi Biswas, Sarika Goyal and  K. Sreenadh]
	{Reshmi Biswas, Sarika Goyal and K. Sreenadh}
	\address{Reshmi Biswas \newline
		Department of Mathematics, IIT Delhi, Hauz Khas, New Delhi 110016, India}
	\email{reshmi15.biswas@gmail.com}

	\address{Sarika Goyal \newline
		Department of Mathematics, Netaji Subhas University of Technology,
		Dwarka Sector-3, Dwarka, Delhi, 110078, India}
	\email{sarika1.iitd@gmail.com }
	
	\address{K. Sreenadh \newline
		Department of Mathematics, IIT Delhi, Hauz Khas, New Delhi 110016, India}
	\email{sreenadh@maths.iitd.ac.in}
	\subjclass[2020]{35J20, 35J60}
	
	\keywords{Quasilinear Schr\"odinger equation, $N$-Laplacian,  Stein-Weiss type convolution, Trudinger-Moser inequality, Critical exponent}	
	
	\begin{abstract}{ In this article, we investigate the existence of  positive solutions to the following  class of quasilinear  {Schr\"odinger} equations involving Stein-Weiss type convolution
			\begin{equation*}
				\left\{
				\begin{array}{l}
					-\Delta_N u -\Delta_N (u^{2})u +V(x)|u|^{N-2}u=
					\left(\displaystyle\int_{\mb R^N}\frac{F(y,u)}{|y|^\beta|x-y|^{\mu}}~dy\right)\ds\frac{f(x,u)}{|x|^\beta} \; \text{\;\;\; in}\;
					\mathbb R^N,
				\end{array}
				\right. 
			\end{equation*}
			where $N\geq 2,$ $0<\mu<N,\, \beta\geq 0,$ and $2\beta+\mu< N.$ The potential  $V:\mb R^N\to \mb R$ is a continuous function satisfying $0<V_0\leq V(x)$ for all $x\in \mb R^N$ and some suitable assumptions. The nonlinearity  $f:\mb R^N\times \mb R\to \mb R$ is a continuous function with critical exponential growth in the sense of the Trudinger-Moser inequality and $F(x,s)=\int_{0}^s f(x,t)dt$ is the primitive of  $f$. 	 }
	\end{abstract}
	
	\maketitle
	\tableofcontents
	
		\section{Introduction and the main results}
	\noi This article concerns the existence of positive solutions for the following family of quasilinear Schr\"odinger equations with Stein-Weiss type convolution:
	\begin{equation*}
		\label{pq}\tag{$P_{*}$}\left\{
		\begin{array}{l}
			-\Delta_N u -\Delta_N (u^{2})u +V(x)|u|^{N-2}u=
			\left(\displaystyle\int_{\mb R^N}\frac{F(y,u)}{|y|^\beta|x-y|^{\mu}}~dy\right)\ds\frac{f(x,u)}{|x|^\beta} \; \text{in}\;
			\mathbb R^N,
		\end{array}
		\right.
	\end{equation*}
	where $\beta, \mu$ satisfy the following:
	\begin{align}
	\label{be}N\geq 2,\,\beta\geq 0,\,  0<\mu<N,\text{ and}\; 2\beta+\mu< N.
\end{align} 
The nonlinearity  $f:\mb R^N\times \mb R\to \mb R$ is a continuous function with critical exponential growth in the sense of the Trudinger-Moser inequality and fulfills some appropriate hypotheses, described later and $F(x,s)=\int_{0}^s f(x,t)dt$ is the primitive of  $f$. { The potential function  $V:\mb R^N\to \mb R$ is continuous and is assumed to  satisfy some suitable assumptions which are stated afterwards.} \\ 
	
	\noi  The problems driven by the quasilinear  operator $-\Delta u -\Delta (u^{2})u,$ have an ample amount of applications in the modeling of the physical phenomenon such as   dissipative quantum mechanics \cite{has}, plasma physics and fluid mechanics \cite{bass}, etc.
	Solutions of such problems are related to the existence of standing wave solutions for quasilinear Schr\"odinger equations of
	the form
	\begin{align}\label{l}
		iu_t =-\Delta u+V(x)u-h_1(|u|^2)u-C\Delta h_2(|u|^2)h_2'(|u|^2)u,\;\; x\in\mathbb R^N,
	\end{align}
	where $V$ is a continuous potential, $C$ is  some real constant, $h_1$ and $h_2$ are some real valued functions with some suitable assumptions. For  different types of    $h_2$, the quasilinear equations of the form \eqref{l} represent  different phenomenon in the mathematical physics. If $h_2(s)=s$ (see \cite{1}), then \eqref{l} models the superfluid film equation in plasma physics and in case of $h_2=\sqrt{1+s^2}$ (see \cite{33}), \eqref{l} describes the self-channeling of a high-power ultra short laser in matter.
	Observe that the term $\Delta_N(u^2)u$, present in problem  \eqref{pq}, restrains  the natural energy functional corresponding to the
	problem \eqref{pq} to be  well defined for all $u\in W^{1,N}({\mathbb R^N})$ (defined in Section \ref{sec2}). Hence, the variational method can't be applied directly for such problems of type \eqref{pq}. To deal with this 
	inconvenience, researchers have developed several methods and arguments, such as   a constrained minimization
	technique (see for e.g., \cite{20,22,29}), the perturbation method (see for e.g.,\cite{19,24})  and a
	change of variables (see for e.g., \cite{CJ}, \cite {do,8,12}).\\

	The nonlinearity in the problem \eqref{pq}   is nonlocal in nature. It is basically driven by the doubly weighted Hardy-Littlewood-Sobolev inequality (also called Stein-Weiss type inequality) and the Trudinger-Moser inequality in $\mb R^N$. Let us first recall the following doubly weighted Hardy-Littlewood-Sobolev inequality (see \cite{SW}).
	\begin{proposition}\label{HLS}
		(\textbf {Doubly Weighted Hardy-Littlewood-Sobolev inequality}) Let $t$, $s>1$ and $0<\mu<N $ with $\vartheta+\beta\geq0$, $\frac 1t+\frac {\mu+\vartheta+\beta}{N}+\frac 1s=2$, $\vartheta<\frac{ N}{t'}, \beta<\frac {N}{s'}$, $g_1 \in L^t(\mathbb R^N)$ and $g_2 \in L^s(\mathbb R^N)$, where $t'$ and $s'$ denote the H\"older conjugate of $t$ and $s$, respectively. Then there exists a  constant $C(N,\mu,\vartheta,\beta,t,s)$, independent of $g_1,$ $g_2$ such that
		\begin{equation}\label{HLSineq}
			\int_{\mb R^N}\int_{\mb R^N} \frac{g_1(x)g_2(y)}{|x-y|^{\mu}|y|^\vartheta|x|^\beta}\mathrm{d}x\mathrm{d}y \leq C(N,\mu,\vartheta,\beta,t,r)\|g_1\|_{L^t(\mb R^N)}\|g_2\|_{L^s(\mb R^N)}.
		\end{equation}
	
	\end{proposition}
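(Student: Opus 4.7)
The plan is to reduce the statement to the classical unweighted Hardy--Littlewood--Sobolev inequality via duality together with a weighted Hardy-type control on the singular weights. By duality, the inequality is equivalent to proving the boundedness of the linear operator
\begin{equation*}
	(Tg)(x) = \frac{1}{|x|^\beta} \int_{\mb R^N} \frac{g(y)}{|x-y|^\mu\, |y|^\vartheta}\,dy
\end{equation*}
from $L^s(\mb R^N)$ into $L^{t'}(\mb R^N)$; the scaling identity $\frac{1}{t}+\frac{\mu+\vartheta+\beta}{N}+\frac{1}{s}=2$ is precisely what makes $T$ compatible with the natural dilation $g(\cdot) \mapsto g(\lambda\,\cdot)$ between those Lebesgue spaces, so a dilation argument also shows the exponent relation is sharp.

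Next, I would perform a decomposition of $\mb R^N \times \mb R^N$ into three regions: (a) $|y|\leq |x|/2$, where $|x-y|\approx |x|$; (b) $|x|\leq |y|/2$, where $|x-y|\approx |y|$; and (c) $|x|/2 < |y| < 2|x|$, where the two weights $|x|^{-\beta}$ and $|y|^{-\vartheta}$ are comparable. On regions (a) and (b) the kernel collapses to a single power weight in one of the two variables, so H\"older's inequality combined with a one-dimensional weighted Hardy inequality (which requires exactly $\vartheta < N/t'$ and $\beta < N/s'$) yields the required bound. On region (c), the two weights fuse into a single factor comparable to $|x|^{-(\beta+\vartheta)}$ which can be absorbed into the convolution kernel, reducing the estimate to the classical unweighted HLS inequality applied with effective singularity exponent $\mu+\vartheta+\beta$; this is where the standing hypothesis $\vartheta+\beta\geq 0$ is used, since otherwise the ``fused'' kernel would be regularizing rather than singular and the reduction would fail in the opposite direction.

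The main obstacle is the simultaneous handling of the two singular weights: when both $\vartheta$ and $\beta$ are strictly positive, neither can be absorbed trivially, and one must verify that the exponent constraints---the scaling identity, the strict upper bounds $\vartheta<N/t'$ and $\beta<N/s'$, and the lower bound $\vartheta+\beta\geq 0$---are together \emph{exactly} what prevents divergence in each of the three regions. The cleanest way to organize this is through a Schur test applied with testing functions of the form $u(x)=|x|^{-a}$ and $v(y)=|y|^{-b}$: one checks that the set of admissible exponents $(a,b)$ producing finite Schur integrals is nonempty precisely under the stated hypotheses, which completes the proof after undoing the duality. An alternative route, yielding the same bound, is to write $Tg(x)=|x|^{-\beta}\,I_{N-\mu}\!\bigl(|\cdot|^{-\vartheta}g\bigr)(x)$ with $I_{N-\mu}$ the Riesz potential and to invoke Pitt's inequality for the weighted mapping properties of $I_{N-\mu}$; both routes hinge on the same arithmetic of the exponents.
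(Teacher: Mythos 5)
The paper does not prove this proposition at all: it is the classical doubly weighted (Stein--Weiss) Hardy--Littlewood--Sobolev inequality, quoted directly from Stein and Weiss \cite{SW}, so there is no in-paper argument to compare yours against. Your outline does follow the classical route (duality, dilation invariance, three-region decomposition, reduction to unweighted HLS on the diagonal region), and as a plan it is essentially the standard one. Two caveats, though. First, the central step is left unexecuted: you assert that the exponent constraints are ``exactly what prevents divergence in each of the three regions'' and that ``one checks'' the Schur integrals converge, but these verifications are the entire content of the theorem, so what you have is a road map rather than a proof. Second, and more substantively, the plain Schur test with power-weight testing functions $u(x)=|x|^{-a}$, $v(y)=|y|^{-b}$ establishes $L^p\to L^p$ bounds for kernels homogeneous of degree $-N$; here the kernel $|x|^{-\beta}|x-y|^{-\mu}|y|^{-\vartheta}$ is homogeneous of degree $-(\mu+\vartheta+\beta)$, and the scaling relation gives $\frac1s-\frac1{t'}=1-\frac{\mu+\vartheta+\beta}{N}$, so in general $T$ must map $L^s$ into the \emph{larger-exponent} space $L^{t'}$ (it is smoothing), and the naive Schur test cannot produce such off-diagonal bounds. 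One genuinely needs either the decomposition argument with unweighted HLS carrying the region where $|x|\sim|y|$ (as in the original Stein--Weiss paper) or a suitable off-diagonal Schur/interpolation variant; also note that Pitt's inequality for the Riesz potential, which you offer as an alternative, is itself usually deduced from the Stein--Weiss inequality, so that route risks circularity. Finally, be aware that the full classical statement also requires $\frac1t+\frac1s\geq1$ (equivalently $\mu+\vartheta+\beta\leq N$), which is needed for your region-(c) reduction to make sense and which, in the paper's application with $t=s=\frac{2N}{2N-2\beta-\mu}$ and $\vartheta=\beta$, is guaranteed by the standing assumption $2\beta+\mu<N$ in \eqref{be}.
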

	 	\noi	For $\vartheta=\beta=0$, it is reduced to the Hartee type (also called the Choquard type)  nonlinearity, which is driven by the classical Hardy-Littlewood-Sobolev inequality (see \cite{lieb}). So, the problem \eqref{pq} is closely related to the Choquard type equations. The study of Choquard equations started 
	with the seminal work of S. Pekar \cite{pekar}, where the author considered 
	the following nonlinear Schr\"{o}dinger-Newton equation:
	\begin{align}\label{sn}
		-\Delta u + V(x)u = ({\mathcal{K}}_\mu * u^2)u +\la f_1(x, u)\;\;\; \text{in}\; \mathbb R^N,
	\end{align}
	where $\la>0,$ $\mathcal{K}_\mu$ denotes  the  Riesz potential, $V:\mb R^N\to \mb R$ is a continuous function and $f_1:\mb R^N\times\mb R\to \mb R$ is a Carath\'eodory function with some appropriate growth assumptions.
	Researchers pay serious attention to studying such equations due to the rich applications in Physics, such as  the Bose-Einstein
	condensation (see \cite{bose}), the self gravitational
	collapse of a quantum mechanical wave function (see \cite{penrose}), etc. To describe the quantum theory of a polaron at rest and for modeling the phenomenon when an electron gets trapped in its own hole in the Hartree-Fock theory, P. Choquard (see \cite{choq}) used such elliptic equations of type \eqref{sn}.
	 For more rigorous  study  of Choquard type  equations, the readers can refer to
	\cite{lieb,Lions,moroz,moroz-main,moroz4,moroz5} and the references therein. \\
	
 One of the  main features of problem \eqref{pq} is that the 
	nonlinear term $f(x,s)$ has the maximal growth on $s$, that is, critical exponential growth in the sense of  Trudinger–Moser inequality in $\mb R^N$. For some bounded domain $\Om\subset \mb R^N$ ($N\geq 2$)   the  Trudinger–Moser inequality is established in \cite{Trud-Moser}. 
	That  form  of the Trudinger–Moser
	inequality is not valid in unbounded domains. Hence, for the whole of $\mb R^N,$ Cao \cite{cao} proposed an alternative version for the case $N=2$.  Later,
	for general  $N \geq 2$ such inequality is established by J. M. do \'O \cite{17}:
	\begin{theorem}\label{TM-ineq}(\textbf {Trudinger–Moser inequality in $\mb R^N$})
		If $\al>0,$ $N\geq 2$ and $u \in W^{1,N}({\mathbb R^N})$ then
		\[\int_{\mathbb R^N} \left[\exp(\alpha|u|^{\frac{N}{N-1}})-S_{N-2}(\al,u)\right]~dx < \infty,\] where $$\ds S_{N-2}(\al,u)=\ds\sum_{m=0}^{N-2}\frac{\al^m|u|^{\frac{m N}{N-1}}}{m!}.$$ Moreover, if $\alpha < \alpha_N$, and $
		\|u\|_{L^N(\mb R^N)}\leq K$, then there exists a positive constant $C=C(\al,K,N)$ such that
		\[\sup_{\|\nabla u\|_{L^N(\mb R^N)}\leq 1}\int_{\mathbb R^N} \left[\exp(\alpha|u|^{\frac{N}{N-1}})-S_{N-2}(\al,u)\right]~dx < C,\] where $\alpha_N = N\omega_{N-1}^{\frac{1}{N-1}}$ and $\omega_{N-1}=$ $(N-1)$- dimensional surface area of $\mb S^{N-1}.$
	\end{theorem}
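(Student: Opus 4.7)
The plan is to prove both assertions by splitting $\mb R^N$ into $A_1 = \{|u| \leq 1\}$ and $A_2 = \{|u| > 1\}$, treating each part separately, and reducing the analysis on $A_2$ to the classical bounded-domain Trudinger--Moser inequality via Schwarz symmetrization.

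\noi\emph{Small-value part.} On $A_1$ I would expand
\[
\exp(\al|u|^{N/(N-1)}) - S_{N-2}(\al,u) = \sum_{m=N-1}^\infty \frac{\al^m |u|^{mN/(N-1)}}{m!}.
\]
Since $mN/(N-1) \geq N$ for every $m \geq N-1$ and $|u|\leq 1$ on $A_1$, each factor $|u|^{mN/(N-1)}$ is dominated by $|u|^N$, and the entire series is bounded by $\bigl(e^\al - S_{N-2}(\al,1)\bigr)|u|^N$. Integrating over $A_1$ gives $C(\al)\|u\|_{L^N(\mb R^N)}^N$: finite for every $u\in W^{1,N}(\mb R^N)$, and at most $C(\al)K^N$ under the hypotheses of the second assertion.

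\noi\emph{Large-value part.} Since $|A_2| \leq \int_{\mb R^N}|u|^N\,dx \leq K^N$, the set $A_2$ has finite measure. Setting $v := (|u|-1)^+ \in W^{1,N}(\mb R^N)$, one has $|\nabla v|\leq|\nabla u|$ almost everywhere and $\{v>0\}\subseteq A_2$. The Young-type inequality $(1+t)^{N/(N-1)} \leq (1+\e)\,t^{N/(N-1)} + C_\e$ applied with $t = v$ yields on $A_2$
\[
\exp(\al|u|^{N/(N-1)}) \leq e^{\al C_\e}\exp\!\bigl((1+\e)\al\, v^{N/(N-1)}\bigr),
\]
so (since $S_{N-2}\geq 0$) it suffices to estimate $\int_{A_2}\exp((1+\e)\al v^{N/(N-1)})\,dx$, which equals $|A_2| + \int_{\mb R^N}\bigl(\exp((1+\e)\al v^{N/(N-1)})-1\bigr)dx$ because $v\equiv 0$ off $A_2$. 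Schwarz-symmetrizing $v$ to $v^*$, the P\'olya--Szeg\H{o} principle yields $\|\nabla v^*\|_{L^N}\leq 1$, and equimeasurability places $v^*$ on a ball $B_r$ of volume $|\{v>0\}|\leq K^N$ while preserving the exponential integral.

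\noi\emph{Conclusion.} On the bounded ball $B_r$ the classical Trudinger inequality supplies finiteness of the exponential integral for any exponent, which handles the first assertion; for the second assertion I would choose $\e$ so small that $(1+\e)\al < \al_N$ and invoke the sharp Moser inequality on $B_r$, producing a bound depending only on $\al$, $K$, and $N$. The main obstacle, and the reason the Taylor polynomial $S_{N-2}$ must be subtracted, is that $\exp(0)=1$ is not integrable on the unbounded domain $\mb R^N$: the subtraction forces the integrand to vanish to order $N$ where $u$ is small, and the splitting-plus-rearrangement scheme above is precisely the mechanism that transfers the sharp constant $\al_N$ from the bounded-domain Moser inequality to $\mb R^N$.
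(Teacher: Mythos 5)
This theorem is quoted in the paper from do \'O \cite{17} (and Cao \cite{cao} for $N=2$) without any proof, so there is nothing internal to compare against; your argument is essentially the standard proof from those references (splitting into $\{|u|\le 1\}$ and $\{|u|>1\}$, bounding the Taylor remainder by $|u|^N$ on the small set, truncating to $v=(|u|-1)^+$, and transferring to a ball of measure at most $K^N$ via P\'olya--Szeg\H{o} so that Moser's sharp bounded-domain inequality applies), and it is correct. The only steps you take on faith --- the elementary inequality $(1+t)^{N/(N-1)}\le(1+\e)t^{N/(N-1)}+C_\e$, the fact that $v^*$ restricted to $B_r$ lies in $W^{1,N}_0(B_r)$, and the non-uniform integrability of $\exp(\al|w|^{N/(N-1)})$ on bounded domains for arbitrary $\al>0$ needed for the first assertion --- are all classical and unproblematic.
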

	\noi  Here  $W^{1,N}({\mathbb R^N})$ is the Sobolev space, defined in Section \ref{sec2} and $L^N(\mb R^N)$ denotes the classical Lebesgue space. 
 The critical growth non-compact problems associated to this inequality in bounded domains are initially studied by  Adimurthi \cite{adi} and de Figueiredo et al. \cite{DR}. We would like to point out the fact that in the case of critical exponential problem involving $N$-Laplacian,   the critical exponential growth is equivalent to $\exp(|u|^{N/(N-1)})$. But in  problem \eqref{pq}, due to the term $\Delta_N (u^2)u$, the nature of the critical exponential growth is of the form $\exp(|s|^{2{N/(N-1)}})$. In the case $1<p<N$, the nonlinearity is of polynomial growth and the critical growth is equivalent to $ |u|^{2p^*},$ where  $p^*=Np/(N-p)$ (see for e.g. \cite{deng,8,27}).
In the same spirit, in the critical dimension  $N=2$,  the critical nonlinearity is expected to behave like $\exp(\al s^{4})$ as $s\rightarrow \infty$ (see \cite{do, wang}). 
For more development on this topic,  we refer to some recent contemporary works \cite{sv-2,sv-3}, where the authors studied the equations of type \eqref{pq} with critical exponential nonlinearity  for $N=2,$ without the convolution term $\ds\int_{\mb R^N}F(y,u)|x-y|^{-\mu}~dy$.\\

 For the case $\beta=0$ and $N=2$, without the quasilinear term $\De_N(u^2) u$ in  problem \eqref{pq},  Alves et al. \cite{yang-JDE} studied  the  following Choquard equation:
\[-\e^2\Delta u+V(x)u=\left(\displaystyle\int_{\mb R^2}\frac{F(y,u)}{|x-y|^{\mu}}~dy\right)f(x,u),\;\;\; x\in\mb R^2,\] where  $\e>0$, $0<\mu<2$, $V:\mb R^2\to\mb R$ is a continuous potential function with some suitable properties and the continuous function $f:\mb R^2\times\mb R\to\mb R$ enjoys the  critical exponential growth, in the sense of Trudinger-Moser inequality in $\mb R^2.$
In case of higher critical dimension, that is for $N\geq2$ and in  bounded domain $\Om\subset \mb R^N$, Giacomoni et al. \cite{AGMS} investigated the Kirchhoff-Choquard problems involving the $N$-Laplacian ($N\geq 2$) with critical exponential growth.
Furthermore,  the authors in \cite{bis} discussed the existence result for the problem \eqref{pq} in case of $\beta=0, V\equiv 0$ with critical exponential growth on the nonlinearity in a bounded domain  $\Om\subset \mb R^N,\, N\geq 2.$ \\

 {On the other hand, for the problems involving Stein-Weiss type nonlinearity, we can find very few  works
on that topic. In \cite{jjj}, Giacomoni et al. studied the polyharmonic Kirchhoff equations involving the singular weights with critical Choquard (that is, critical Stein-Weiss) type exponential nonlinearity in a bounded domain in $\mb R^N$ for $N\geq2$. Then, in \cite{gao-yang-du}, the authors discussed the following equation in the whole of $\mb R^N$:
$$
\displaystyle-\Delta u
=\frac{1}{|x|^{\beta}}\left(\int_{\mathbb{R}^{N}}\frac{|u(y)|^{2_{\beta, \mu}^{\ast}}}{|x-y|^{\mu}|y|^{\beta}}dy\right)
|u|^{2_{\beta, \mu}^{\ast}-2}u,~~~x\in\mathbb{R}^{N},
$$ where the critical 
exponent $2_{\beta, \mu}^* := \frac{2N-(\mu+\beta)}{N-2}$ is due to the  weighted Hardy-Littlewood-Sobolev inequality \eqref{HLSineq} (with $\beta=\vartheta$) and Sobolev embedding. For more   works on this type of nonlinearity, we refer to \cite{s4,s2,s1,s3} and references there in without attempting to provide the complete list.} \\ 

 {Inspired  by all the aforementioned works, in this article, we investigate the existence results for the problem \eqref{pq}, involving the Stein-Weiss type convolution with critical exponential nonlinearity.
We would like to highlight that, to the best of our knowledge, there is no such existence result in the literature for problem \eqref{pq} even for the case when the  dimension $N=2$. Moreover,
our result in this article is new in some sense  without the presence of the convolution term in the right hand side in problem \eqref{pq}. Furthermore, the equation of type \eqref{pq}  without the term $\Delta_N(u^2)u$ has not been addressed yet. All these three cases are covered in this article for the first time.\\    

Now we consider the following hypotheses  on the continuous  function $f:{\mathbb R^N}\times\mb R\to\mb R$:
\begin{enumerate}
\item[$(f_1)$] $f\in C({{\mathbb R^N}}\times \mb R,\mb R)$ such that for a.e. $x\in\mb R^N,$ $f(x,s)=0,$ if $s\le 0$ and  $f(x,s)>0,$ if
$s>0$.
\item[$(f_2)$] $\displaystyle \lim_{s\to 0^+}\frac{f(x,s)}{s^{N-1} }=0$ uniformly in $x\in\mathbb R^N.$
\item[$(f_3)$]There exists some $\al_0>0$ such that, \begin{equation*}
\ds\lim_{s\to+\infty}f(x,s)\exp\left(-\al|s|^{\frac{2N}{N-1}}\right)=	\left\{
\begin{array}{l}
0,\;\;\;\;\;\;\;\;~~\mbox{ for all}~~\al>\al_0,\\
+\infty,~~\;\;\mbox{ for all}~~ \al<\al_0.
\end{array}
\right.
\end{equation*}
\item[$(f_4)$] There exist positive constants  $s_0$, $M_0$ and $m_0$ such that
\[ 0<s^{m_0} F(x,s)\le M_0 f(x,s),\;\mbox{for all}\; (x,s)\in \mb R^N\times[s_0,+\infty).\]
\item[$(f_5)$]  There exists $\ell>N$  such that $0<\ell F(x,s)\leq f(x,s)s,$ for all $ s>0.$
\item[$(f_6)$] We assume that\begin{equation}\label{h-growth}
\displaystyle \lim_{s\to +\infty} \frac{sf(x,s)F(x,s)}{\exp\left(2 |s|^{\frac{2N}{N-1}}\right)} = +\infty,\mbox{ uniformly in }x \in {{\mathbb R^N}}.
\end{equation}
\end{enumerate}
\begin{remark}
It is important to mention that the condition \eqref{h-growth} can be weakened by replacing $\infty$ by some real number $C$, whose optimum value can  be estimated  (see \cite{ay,M-do1}, etc).  But we would like to convey that  it doesn't  bring any effective
changes in this context of the main result except for the Lemma \ref{PS-level}.
\end{remark}
Next, we consider the following  assumptions on the potential function $V:$
\begin{enumerate}
\item[$(V_1)$] $V\in C({{\mathbb R^N}}, \mb R)$ such that $\displaystyle\inf_{x\in \mb R^N} V(x)=V_0>0$.
\item[$(V_2)$] The potential function $V:\mb R^N\to\mb R$ satisfies $\ds\lim_{|x|\to+\infty} V(x)=+\infty.$
\item[$(V_2^\prime)$] 
$\displaystyle V(x)\leq\lim_{|x|\to+\infty} V(x)=V_\infty<+\infty,$ with $V\not=V_\infty.$

\end{enumerate}
Finally, we state the main theorems  in this article.  
The first theorem in this article reads as:
\begin{theorem}\label{T2}
	Let $\beta, \mu$ satisfy \eqref{be}. Suppose that  the hypotheses $(V_1)$, $(V_2)$ and $(f_1)$-$(f_6)$ are satisfied. Then the problem \eqref{pq} has a nontrivial positive  solution.
\end{theorem}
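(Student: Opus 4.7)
The plan is to follow the by-now standard variational strategy for quasilinear Schr\"odinger equations, suitably adapted to accommodate the Stein--Weiss double weight and the critical exponential growth of order $\exp(|s|^{2N/(N-1)})$ forced by the extra term $-\Delta_N(u^2)u$.

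\textbf{Step 1: Change of variables and the working space.} The natural energy functional associated with \eqref{pq} is not $C^1$ on $W^{1,N}(\mathbb R^N)$ because of the term $\int |\nabla u|^N u^N$ coming from $-\Delta_N(u^2)u$. I would circumvent this by introducing the dual change of variables $v=G(u)$, where $G$ is defined by the ODE $G'(t)=(1+2^{N-1}|G(t)|^{N})^{-1/N}$ (the natural $N$-analogue of the Colin--Jeanjean substitution used for $N=2$), inverted by $u=G^{-1}(v)$. Using standard properties of $G$ one checks that finding a weak solution of \eqref{pq} is equivalent to finding a critical point of
\[
J(v)=\frac1N\int_{\mathbb R^N}|\nabla v|^N\,dx+\frac1N\int_{\mathbb R^N}V(x)|G^{-1}(v)|^N\,dx-\frac12\int_{\mathbb R^N}\!\!\int_{\mathbb R^N}\frac{F(x,G^{-1}(v))F(y,G^{-1}(v))}{|x|^{\beta}|x-y|^{\mu}|y|^{\beta}}\,dx\,dy
\]
on the weighted Sobolev space $E=\{v\in W^{1,N}(\mathbb R^N):\int V(x)|v|^N\,dx<\infty\}$, normed by $\|v\|^N=\int|\nabla v|^N+V(x)|v|^N\,dx$. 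Under $(V_1)$--$(V_2)$, a Rellich-type argument gives a compact embedding $E\hookrightarrow L^q(\mathbb R^N)$ for every $q\in[N,\infty)$, which will be my main compactness tool.

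\textbf{Step 2: Mountain pass geometry.} Combining Proposition~\ref{HLS} (with exponents chosen so that the Stein-Weiss integral is controlled by $\|F(\cdot,G^{-1}(v))\|_{L^t}^2$ for some $t>1$) with Theorem~\ref{TM-ineq}, the growth hypotheses $(f_1)$--$(f_3)$ and the basic inequality $|G^{-1}(v)|\le C|v|^{1/2}$ near infinity, one obtains $J(v)\ge\rho_0>0$ for $\|v\|=r_0$ small. On the other hand $(f_5)$ gives $F(x,s)\ge c|s|^{\ell}$ for large $s$, which together with $\ell>N$ produces $J(tv_0)\to -\infty$ along a nonnegative fixed $v_0$. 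The mountain pass value
\[
c_{\ast}=\inf_{\gamma\in\Gamma}\max_{t\in[0,1]}J(\gamma(t)),\qquad \Gamma=\{\gamma\in C([0,1],E):\gamma(0)=0,\,J(\gamma(1))<0\},
\]
is then well defined and positive.

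\textbf{Step 3: Bounding $c_\ast$ strictly below the Trudinger--Moser threshold.} This is the hard step. Because of the form $\exp(|s|^{2N/(N-1)})$, the allowed PS-level depends on the Stein-Weiss exponents $\beta$ and $\mu$: one needs a threshold of the type
\[
c^{\ast}=\Bigl(\tfrac{1}{2N}-\tfrac{1}{2\ell}\Bigr)\Bigl(\tfrac{(N-\mu-2\beta)\alpha_N}{2N\alpha_0}\Bigr)^{N-1}.
\]
To prove $c_\ast<c^\ast$ I would plug the Moser sequence
\[
M_n(x)=\omega_{N-1}^{-1/N}\begin{cases}(\log n)^{(N-1)/N},& |x|\le \tfrac{1}{n},\\ \dfrac{\log(1/|x|)}{(\log n)^{1/N}},& \tfrac{1}{n}\le |x|\le 1,\\ 0,& |x|\ge 1,\end{cases}
\]
(suitably truncated and normalized in $E$) into $\max_{t\ge0}J(tM_n/\|M_n\|)$. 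After exploiting $|G^{-1}(v)|^N\sim |v|^{N/2}$ for large $v$, the crucial lower bound on the nonlinear term comes from hypothesis $(f_6)$: it forces $sf(x,s)F(x,s)\ge C_\varepsilon \exp(2|s|^{2N/(N-1)})$ for $|s|$ large, which is precisely the growth needed to absorb the double critical exponential created by the Stein-Weiss convolution. Careful book-keeping of the singular weights $|x|^{-\beta}|y|^{-\beta}$ (using polar coordinates around the origin, where $M_n$ concentrates) yields the strict inequality.

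\textbf{Step 4: Convergence of PS sequences and conclusion.} Let $(v_n)\subset E$ be a PS sequence at level $c_\ast$. The Ambrosetti--Rabinowitz-type condition $(f_5)$ together with the properties of $G^{-1}$ shows $(v_n)$ is bounded in $E$; passing to a subsequence, $v_n\rightharpoonup v$ in $E$ and strongly in $L^q(\mathbb R^N)$ for all $q\in[N,\infty)$ by the compactness from $(V_2)$. A Lions-type lemma combined with the $c_\ast<c^\ast$ bound gives uniform $L^{t}$-integrability of the critical exponential nonlinearity for some $t>1$, hence one can pass to the limit in the Stein-Weiss term using Proposition~\ref{HLS} and a Br\'ezis--Lieb-type splitting. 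Therefore $v$ is a nontrivial critical point of $J$. Setting $u=G^{-1}(v)$ gives a nontrivial solution of \eqref{pq}; positivity follows from $(f_1)$ (the nonlinearity vanishes for $s\le0$ so testing against $u^-$ forces $u\ge0$) and the strong maximum principle applied to the transformed equation.

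\textbf{Main obstacle.} The decisive and most delicate step is Step 3: the level estimate $c_\ast<c^\ast$. The doubly-weighted convolution interacts non-trivially with the exponential growth and with the quasilinear correction $|G^{-1}(v)|^N\sim |v|^{N/2}$, so both the exponent in $(f_6)$ and the choice of the Moser test sequence have to be calibrated to the parameters $N,\mu,\beta$ in \eqref{be}. Once this is in place, Steps 2 and 4 are routine adaptations of the arguments in \cite{AGMS,bis,yang-JDE}.
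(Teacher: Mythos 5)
Your overall strategy coincides with the paper's: the Colin--Jeanjean-type substitution reducing \eqref{pq} to a semilinear problem for the $N$-Laplacian, mountain pass geometry on $E$, a Moser-function estimate of the minimax level driven by $(f_6)$, and compactness of Cerami/PS sequences below a singular Trudinger--Moser threshold. However, two steps as written would fail. First, the threshold in your Step 3 is miscalibrated: the correct level is $\frac{1}{2N}\bigl(\frac{2N-2\beta-\mu}{2N}\cdot\frac{\alpha_N}{\alpha_0}\bigr)^{N-1}$, not $\bigl(\frac{1}{2N}-\frac{1}{2\ell}\bigr)\bigl(\frac{(N-\mu-2\beta)\alpha_N}{2N\alpha_0}\bigr)^{N-1}$. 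The exponent $2N-2\beta-\mu$ is forced by the two-sided estimate $\int_{B_\rho}\int_{B_\rho}|x|^{-\beta}|y|^{-\beta}|x-y|^{-\mu}\,dx\,dy\simeq \rho^{\,2N-2\beta-\mu}$ over the concentration ball of the Moser functions: the contradiction argument produces a lower bound of the form $t_k^N\ge \frac p2\,C\delta^{2N-2\beta-\mu}\exp\bigl(\log k\,\bigl[\,2\alpha_0 2^{1/(N-1)}t_k^{N/(N-1)}\omega_{N-1}^{-1/(N-1)}-(2N-2\beta-\mu)\bigr]\bigr)$, and the bracket becomes nonnegative exactly when the level reaches the paper's constant. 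With your smaller constant the bracket stays strictly negative (one computes it is $\le -N$), the right-hand side tends to $0$, and no contradiction is reached; so the strict inequality $c_*<c^*$ cannot be established for general $f$ satisfying only $(f_1)$--$(f_6)$. The prefactor $\frac1{2N}-\frac1{2\ell}$ is likewise unnecessarily small: in the relevant scenario one has the exact identity $\lim\|\nabla v_n\|_{L^N}^N=Nc_*$, not merely an Ambrosetti--Rabinowitz-type inequality.

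Second, Step 4 never actually proves that the weak limit is nontrivial, which is the only place the level estimate is used. Passing to the limit in the equation (including the Stein--Weiss term) is carried out at \emph{every} level by truncation and equi-integrability arguments together with a.e.\ convergence of the gradients --- a genuinely nontrivial point for the $N$-Laplacian that a Br\'ezis--Lieb splitting does not supply. The level bound enters only through the dichotomy: if the weak limit were $0$, then by the compact embedding $E\hookrightarrow L^q$ and the truncation lemma both the potential term and the nonlocal term vanish, so $\lim\|\nabla v_n\|_{L^N}^N=Nc_*$; since $c_*$ lies below the threshold, the singular Trudinger--Moser inequality combined with the Hardy--Littlewood--Sobolev inequality gives uniform integrability of the nonlinear term in $\langle J'(v_n),v_n\rangle$, whence $\|\nabla v_n\|_{L^N}\to0$ and $c_*=0$, a contradiction. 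Your proposal asserts ``therefore $v$ is a nontrivial critical point'' without this (or any) argument. Two minor points: your substitution is written backwards --- for $|G^{-1}(v)|\lesssim |v|^{1/2}$ to hold, the defining ODE must be satisfied by $G^{-1}$ (the paper's $h$, with $u=h(w)$), not by $G$; and a Lions-type vanishing lemma is not needed here, since $(V_2)$ already yields the compact embedding that replaces it.
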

\begin{remark}
In Theorem \ref{T2}, due to the presence of Stein-Weiss type nonlinearity, the corresponding energy functional is not translation invariant. Therefore, to show the existence of nontrivial solution, we are unable to use Lions type concentration compactness lemma, when we consider the potential function $V$ to be satisfying the condition $(V_1)$, $(V_2)$. We propose this case as an open question even for the problem \eqref{pq} without the quasilinear term $\De_N(u^2)u$. So to tackle this issue, we consider the assumption $(V_2^\prime)$ instead of $(V_2)$ in the above theorem.
\end{remark}
\noi In the next theorem, we deal with the Choquard type nonlinearity considering $(V_2^\prime)$ which is a weaker assumption on $V$ and creates a non compact situation while investigating  a  positive solution to the problem \eqref{pq}. 
\begin{theorem}\label{T3}
	Let  $\beta=0$ in \eqref{be}.  Assume that the hypotheses $(V_1)$, $(V_2^\prime)$ and $(f_1)$-$(f_6)$ hold. Then the problem \eqref{pq} has a nontrivial positive solution.
\end{theorem}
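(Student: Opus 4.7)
\noi\textbf{Proof plan for Theorem \ref{T3}.} The strategy is a mountain-pass argument applied to a transformed functional, combined with a strict level comparison against the ``problem at infinity''. The hypothesis $\beta=0$ restores translation invariance of the limiting autonomous problem and thereby substitutes, in the compactness analysis, for the coercivity $(V_2)$ used in Theorem \ref{T2}. Because the quasilinear term $\De_N(u^2)u$ prevents the natural energy from being well defined on $W^{1,N}(\mb R^N)$, I would first carry out a change of variable $v=G^{-1}(u)$ of the type in \cite{CJ,do,12}, after which the energy
\begin{equation*}
J_V(v)=\frac{1}{N}\int_{\mb R^N}|\na v|^N\,dx+\frac{1}{N}\int_{\mb R^N}V(x)|G^{-1}(v)|^N\,dx-\frac{1}{2}\int_{\mb R^N}\int_{\mb R^N}\frac{F(x,G^{-1}(v))F(y,G^{-1}(v))}{|x-y|^\mu}\,dx\,dy
\end{equation*}
is $C^1$ on $E:=\{v\in W^{1,N}(\mb R^N):\int V(x)|v|^N<\infty\}$ by $(V_1)$, $(f_1)$--$(f_3)$, Proposition \ref{HLS} (with $\vartheta=\beta=0$) and Theorem \ref{TM-ineq}. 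Mountain-pass geometry for $J_V$ follows from $(V_1),(f_1),(f_2)$ locally and from $(f_5)$ globally; the Ambrosetti--Rabinowitz condition $(f_5)$ also bounds every Palais--Smale sequence, so Ekeland's principle delivers a bounded $(PS)_{c_V}$-sequence at the mountain-pass level $c_V$.

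The second step is a strict level comparison. I introduce the autonomous functional $J_\infty$ obtained by replacing $V$ by $V_\infty$ and $f(x,\cdot)$ by its limit at infinity, and denote its mountain-pass level by $c_\infty$. From $(V_2^\prime)$ together with $V\not\equiv V_\infty$ one obtains $J_V(u)<J_\infty(u)$ on $E\setminus\{0\}$; evaluating at an optimal path for $J_\infty$ then yields the strict inequality $c_V<c_\infty$. To keep $c_\infty$ below the Trudinger--Moser threshold attached to $\alpha_N$ in Theorem \ref{TM-ineq}, I would insert concentrating Moser-type sequences and use the sharp growth hypothesis $(f_6)$; note that the exponent $2N/(N-1)$ appearing in $(f_3),(f_6)$ encodes precisely the doubling effect of the quasilinear term through the change of variable $G$.

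The third and decisive step is the compactness claim below $c_\infty$. For the $(PS)_{c_V}$-sequence $(v_n)$, Proposition \ref{HLS} combined with the uniform Trudinger--Moser estimate lets me pass to the weak limit $v$ inside the Stein--Weiss term and check $J_V'(v)=0$. To exclude $v\equiv 0$ I invoke Lions's vanishing lemma in $W^{1,N}(\mb R^N)$: vanishing would kill the convolution term and contradict $c_V>0$, and the dichotomy alternative is ruled out by a Brezis--Lieb splitting of both the exponential and the Choquard nonlinearities, together with $c_V<c_\infty$ and the translation invariance of $J_\infty$ — this is exactly where $\beta=0$ is needed. Setting $u:=G^{-1}(v)$ then produces a nontrivial weak solution of \eqref{pq}; positivity follows from $(f_1)$ (which forces $J_V(v)=J_V(v^+)$), a Moser iteration, and the strong maximum principle.

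The main obstacle is the dichotomy alternative in the last step: the gap $c_V<c_\infty$ must be quantitative enough to absorb \emph{both} the Trudinger--Moser concentration and the Hardy--Littlewood--Sobolev loss of compactness inside the Brezis--Lieb splitting. This balance is what $(f_6)$ together with the translation invariance granted by $\beta=0$ provide; the failure of translation invariance when $\beta>0$ is exactly the reason Theorem \ref{T2} has to resort to the coercivity assumption $(V_2)$ instead.
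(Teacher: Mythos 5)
Your skeleton matches the paper's in several places: the change of variable $h$, the mountain--pass setup for the transformed functional, the estimate of the minimax level below the Trudinger--Moser threshold via Moser functions and $(f_6)$, the use of Lions's vanishing lemma to exclude vanishing, and positivity via the maximum principle. The genuine gap is in how you execute the comparison with the problem at infinity. You propose to prove $c_V<c_\infty$ \emph{a priori} by ``evaluating at an optimal path for $J_\infty$''; but the existence of such an optimal path (equivalently, of a ground state, or at least of a path realizing $c_\infty$ with its maximum at a critical point) for the autonomous critical-exponential Choquard problem is not available and is essentially as hard as the theorem itself --- establishing it would require the very compactness you are trying to prove. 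Likewise, the ``Brezis--Lieb splitting of both the exponential and the Choquard nonlinearities'' that you invoke to rule out dichotomy is not a routine tool at critical exponential growth under a Stein--Weiss convolution, and no version of it is proved or used in the paper.

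The paper avoids both difficulties by running the argument in reverse. It assumes the weak limit $w$ of the Cerami sequence is trivial, uses non-vanishing (Lemma \ref{non-van}) to translate the sequence and produce a \emph{nontrivial critical point} $\tilde w$ of the limiting functional $I_\infty$, and then proves the two-sided bound $\theta_\infty\le I_\infty(\tilde w)\le\theta_c$ (Lemma \ref{last}): the upper bound comes from Fatou's lemma applied to $I_\infty-\frac1N\langle I_\infty',\cdot\rangle$, and the lower bound from an explicitly constructed admissible path through $\tilde w$, built from dilations $\tilde w(x/t)$, whose maximum equals $I_\infty(\tilde w)$ \emph{because of the generalized Pohozaev identity} (Proposition \ref{Pohzv}), which forces $\int G(x,\tilde w)\,dx=0$ and hence $I_\infty(\tilde w_t)=I_\infty(\tilde w)=\frac1N\|\nabla\tilde w\|_{L^N}^N$ along the dilation. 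Only then does the strict pointwise inequality $J<I_\infty$ (from $V\le V_\infty$, $V\not\equiv V_\infty$) on that path yield $\theta_c<I_\infty(\tilde w)\le\theta_c$, the desired contradiction. Your proposal contains no substitute for the Pohozaev identity, which the authors list as one of the main technical contributions, so the decisive step of your plan cannot be completed as written. A smaller point: you speak of replacing $f(x,\cdot)$ by ``its limit at infinity'' in the autonomous functional, but no asymptotic behaviour of $f$ in $x$ is assumed in $(f_1)$--$(f_6)$, so that object is undefined; the paper's $I_\infty$ only replaces $V$ by $V_\infty$.
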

half of the critical level when the principal operator does not contain $\De_N(u^2)u$. 
The main  contributions in this work are as follows:
\begin{itemize}
\item Finding the suitable first critical energy level (which is exactly half of the critical level when the principal operator does not contain $\De_N(u^2)u$)  to describe the  convergence of the Cerami sequences below this level.
\item For the case $\beta=0$, constructing a suitable path between two energy functionals $J$ (see \eqref{energy}) and $I_\infty$ (see \eqref{iinft}) due to the lack of compactness occurred in the embedding $W^{1,N}(\mb R^N)$ into  $L^N(\mb R^N)$ (see Section \ref{sec2}) because of the condition $(V_2^\prime)$ as well as, due to the presence of the term $\De_N(u^2)u$ in our the problem.
\item Establishing a general Pohozaev type identity related to our problem which is an important tool to prove Theorem \ref{T3}.
\end{itemize}   To analyze these, we need to carry out very delicate and crucial estimates.
\begin{remark}
Following the similar idea, we can establish  the  results as in  Theorem \ref{T3}  under any of the following conditions imposed on the potential functions $V$:
\begin{itemize}
\item[$1.$] (Compact-coercive case):
$V$ satisfies $(V_1)$ and $(V_2)$. 
In this case, we obtain the compact embedding from $W^{1,N}(\mb R^N)$ into $L^q(\mb R^N)$ for $N\leq q<\infty$.
\item[$2.$] (Radially symmetric case):
$V$ satisfies $(V_1)$ and 
$ V(x)=V(|x|)$ for all $x\in \mb R^N$.
In this case, we have the compact embedding from $W^{1,N}(\mb R^N)$ into $L^q(\mb R^N)$ for $N< q<+\infty$.
\item[$3.$] (Asymptotic case of a periodic
function): $V$ satisfies $(V_1)$ with $\ds\lim_{|x|\to+\infty} V(x)=V_m(x), $ where $V_m$ is a $1$-periodic continuous function. Also, $V(x)\leq V_m(x)$ for all $x\in\mb R^N$ and $V(x)<V_m(x)$ on a positive Lebesgue measure set of $\mb R^N$. 
\end{itemize}

\end{remark}
\noi 
\textbf{Notation.}  Throughout this paper, we make use of the following notations:
\begin{itemize}
\item $C_{1},C_{2},\cdots, \tilde C_1, \tilde C_2,\cdots, C$ and $\tilde C$ denote (possibly different from line to line) positive constants.
\item For any exponent $p>1,$ $p'$ denotes the conjugate of $p$ and is given as $p'=\frac{p}{p-1}.$
\item $B_r(x)$ denotes the ball of radius $r$ centered at $x\in\mb R^N$.   
\item If $S$ is a measurable set in $\mathbb{R}^{N}$, we denote the Lebesgue measure of $S$  by $\vert S \vert$ . 
\item The arrows $\rightharpoonup $ and $\to $ denote the weak convergence and  strong convergence, respectively.
\end{itemize}

\section{Variational Frame-work}\label{sec2}
\noi In this section we recall some preliminary results.	For  $1\leq p < \infty$,  
the Sobolev space $W^{1,p}({\mathbb R^N})$ is defined as 
$$W^{1,p}({\mathbb R^N})=\left\{u\in L^p({\mathbb R^N}) : \int_{\mathbb R^N}|\nabla u|^p dx<\infty\right\}$$ which is a Banach space  equipped with the norm
$$\|u\|_{1,p}:=\displaystyle \left({\int_{{\mathbb R^N}} |\nabla u|^p dx}+\int_{\mathbb R^N } |u|^p dx\right)^{1/p}.$$
\noi When $p=N$, we denote $\|\cdot\|:=\|\cdot\|_{1,N}$.	The embedding $ W^{1,N}({\mathbb R^N}) \hookrightarrow L^q({\mathbb R^N})$ is continuous for  $N\leq q<+\infty.$ This embedding is compact for $N<q<+\infty.$ The space $(W^{1,N}(\mb R^N))^*$ is the topological dual of $W^{1,N}(\mb R^N)$.\\Next, when the potential function $V$ satisfies the assumptions $(V_1)$ and $(V_2)$, we define a  subspace  of $W^{1,N}(\mb R^N)$
\[E:=\left\{u\in W^{1,N}(\mb R^N)\,:\,  \ds\int_{\mb R^N} V(x)| u|^N dx<+\infty\right\}\] endowed with the norm 
\[\|u\|_E:=\displaystyle \left({\int_{{\mathbb R^N}} |\nabla u|^N dx}+\int_{\mathbb R^N } V(x)|u|^N dx\right)^{1/N}.\] Note that $(E,\|\cdot\|_E)$ is a Banach space and since $V(x)\geq V_0>0,$ it follows that $E$ is  continuously
embedded in  $W^{1,N}({\mathbb R^N})$. Moreover, there exist continuous embedding and compact embedding from  $ W^{1,N}({\mathbb R^N})$ to  $L^q({\mathbb R^N})$ for  $N\leq q<+\infty.$
The topological dual of the space $E$ is denoted by  $E^*$.  When $V$ satisfies $(V_1)$ and $(V_2^\prime)$ then $\|\cdot\|_E$ is equivalent to $\|\cdot\|$, that is, $E=W^{1,N}(\mb R^N)$. Also, note that $C_c^\infty(\mb R^N)$ is dense in $E$ and in $W^{1,N}(\mb R^N)$ with respect to corresponding norms.\\
\noi Due to the presence of the singular weight $|x|^{-\beta}$ in the problem \eqref{pq}, we recall the following version of the Trudinger-Moser inequalities studied by Adimurthi and Sandeep \cite{as} for bounded domains  $\Om\subset \mb R^N$.
\begin{theorem}\label{as}  Let $\Om\subset\mb R^N (N\geq2)$ be a smooth bounded domain. Then for $u\in W^{1,N}_0(\Om)$ and for any $\al>0,\; 0<\beta<N,$
$$\frac{\exp\left(\al |u|^{\frac{N}{N-1}}\right)}{|x|^\beta}\in L^1(\Om).$$ Moreover,
\[\sup_{\|u\|_{W^{1,N}_0(\Om)}\leq 1}\int_\Om \frac{\exp(\alpha|u|^{\frac{N}{N-1}})}{|x|^\beta}~dx < \infty\]
if and only if $\alpha/\al_N+\beta/N \leq 1$, where \[W^{1,N}_0({\Om})=\left\{u\in W^{1,N}(\Om) :  u=0 \text{\;\; on }\; \partial\Om\right\}\] with the norm 
\[ \|u\|_{W^{1,N}_0(\Om)}:=\displaystyle \left({\int_{{\Om}} |\nabla u|^N dx}\right)^{1/N}.\]
\end{theorem}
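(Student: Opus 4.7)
\medskip

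\noindent\textbf{Proof plan for Theorem \ref{as}.} The plan is to treat the three claims separately: (i) the pointwise $L^1$ finiteness for a fixed $u\in W^{1,N}_0(\Om)$, (ii) the sufficiency of the condition $\al/\al_N+\beta/N\le 1$ for uniform boundedness over the unit ball of $W^{1,N}_0(\Om)$, and (iii) the necessity of this condition. For (i), I would combine the classical Trudinger--Moser inequality (which gives $\exp(\al'|u|^{N/(N-1)})\in L^p(\Om)$ for every $\al'>0$ and $p\ge 1$, provided $u$ is fixed) with H\"older's inequality. Concretely, for any $p>1$ close enough to $1$ so that $\beta p'<N$, one writes
\[
\int_{\Om}\frac{\exp\bigl(\al|u|^{\frac{N}{N-1}}\bigr)}{|x|^{\beta}}\,dx\le \Bigl(\int_{\Om}\exp\bigl(p\al|u|^{\frac{N}{N-1}}\bigr)\,dx\Bigr)^{1/p}\Bigl(\int_{\Om}|x|^{-\beta p'}\,dx\Bigr)^{1/p'},
\]
and both factors are finite since $0<\beta<N$ allows $p'$ to be chosen slightly larger than $1$.

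\medskip

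\noindent For (ii), the heart of the argument, I would perform Schwarz symmetrisation: let $u^{\ast}$ denote the spherically symmetric decreasing rearrangement of $u$ on the ball $B_R$ with $|B_R|=|\Om|$. Since $|x|^{-\beta}$ is itself radially symmetric and decreasing, the Hardy--Littlewood inequality gives
\[
\int_{\Om}\frac{\exp\bigl(\al|u|^{\frac{N}{N-1}}\bigr)-1}{|x|^{\beta}}\,dx\le \int_{B_R}\frac{\exp\bigl(\al|u^{\ast}|^{\frac{N}{N-1}}\bigr)-1}{|x|^{\beta}}\,dx,
\]
while the P\'olya--Szeg\H{o} principle ensures $\|\nabla u^{\ast}\|_{L^N(B_R)}\le\|\nabla u\|_{L^N(\Om)}\le 1$. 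This reduces the problem to radially symmetric decreasing functions on a ball. Then I would use Moser's change of variables $t=N\log(R/|x|)$ and set $w(t)=\al_N^{(N-1)/N} u^{\ast}(Re^{-t/N})$; the norm constraint becomes $\int_0^\infty |w'|^N\,dt\le 1$ and $w$ is nondecreasing with $w(0)=0$. The weighted integral transforms into
\[
\frac{\omega_{N-1}R^{N-\beta}}{N}\int_0^\infty\exp\Bigl(\tfrac{\al}{\al_N}|w|^{\frac{N}{N-1}}-\bigl(1-\tfrac{\beta}{N}\bigr)t\Bigr)\,dt,
\]
and the one-dimensional Moser-type lemma applied with the ``effective'' exponent $(1-\beta/N)$ yields a uniform bound exactly when $\al/\al_N\le 1-\beta/N$, i.e.\ when $\al/\al_N+\beta/N\le 1$.

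\medskip

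\noindent For (iii), the sharpness, I would test the supremum on the classical Moser sequence
\[
u_n(x)=\frac{1}{\omega_{N-1}^{1/N}}\begin{cases}(\log n)^{(N-1)/N}, & |x|\le R/n,\\[2pt] (\log n)^{-1/N}\log(R/|x|), & R/n\le |x|\le R,\end{cases}
\]
(extended by zero outside $B_R\subset\Om$), which is normalised so that $\|\nabla u_n\|_{L^N}=1$. On the inner ball $\{|x|\le R/n\}$ the integrand equals $\exp(\tfrac{\al}{\al_N}\log n)/|x|^{\beta}=n^{\al/\al_N}|x|^{-\beta}$, and a direct computation shows the contribution behaves like $c\,n^{\al/\al_N+\beta/N-1}$, which blows up as $n\to\infty$ whenever $\al/\al_N+\beta/N>1$.

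\medskip

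\noindent The main obstacle will be the one-dimensional reduction in step (ii): one must handle the weight $|x|^{-\beta}$ carefully through the change of variables (it is responsible for the shift $(1-\beta/N)t$ in the exponent) and then adapt Moser's rearrangement lemma on $(0,\infty)$ so that the sharp threshold $\al/\al_N=1-\beta/N$ emerges from balancing the quadratic-like growth of $|w|^{N/(N-1)}$ against the linear decay $(1-\beta/N)t$. Providing a uniform bound precisely at the endpoint $\al/\al_N+\beta/N=1$ (rather than only strictly below it) requires the finer concavity estimates on $w$ coming from the Bliss-type lemma, and this is where the technical work concentrates.
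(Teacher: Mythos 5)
The paper does not prove this theorem: it is recalled verbatim from Adimurthi and Sandeep \cite{as}, so there is no in-paper argument to compare against. Your outline reproduces the standard proof of that reference --- H\"older splitting for the pointwise $L^1$ claim, Schwarz symmetrisation plus Moser's logarithmic change of variables (which converts the weight $|x|^{-\beta}$ into the damped decay $(1-\beta/N)t$) for sufficiency, and the Moser sequence for necessity --- and the strategy is sound. Two small corrections: in step (i) you need $p'$ close to $1$ (equivalently $p$ large, namely $p>N/(N-\beta)$) so that $\beta p'<N$; taking $p$ close to $1$ sends $p'\to\infty$ and destroys the integrability of $|x|^{-\beta p'}$. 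In step (iii) the inner-ball integrand is $n^{N\al/\al_N}|x|^{-\beta}$, since $\al|u_n|^{\frac{N}{N-1}}=\tfrac{N\al}{\al_N}\log n$, so the contribution grows like $n^{N(\al/\al_N+\beta/N-1)}$; the factor $N$ is missing from your exponent, although the resulting blow-up criterion $\al/\al_N+\beta/N>1$ is unaffected.
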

\noi The analogous version of this result in the whole of $\mb R^N$ was established by Adimurthi and Yang \eqref{ay} and we state that result  by taking $\tau=1$. 
\begin{theorem} \label{ay}    For any $\al>0,\; 0\leq\beta<N$ and $u\in W^{1,N}(\mb R^N)$, there holds
$$\frac{\exp\left(\al |u|^{\frac{N}{N-1}}-S_{N-2}(\al,u)\right)}{|x|^\beta}\in L^1(\mb R^N).$$ Moreover
\[\sup_{\|u\|\leq 1}\int_{\mathbb R^N} \frac{\exp(\alpha|u|^{\frac{N}{N-1}})-S_{N-2}(\al,u)}{|x|^\beta}~dx < \infty\]
if and only if $\frac{\alpha}{\al_N}+\frac{\beta}{N} \leq 1$.\end{theorem}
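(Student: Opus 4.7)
The plan is to derive Theorem \ref{ay} by combining the two Trudinger--Moser inequalities already at our disposal: Theorem \ref{TM-ineq} (do Ó) for the unweighted problem on $\mb R^N$, and Theorem \ref{as} (Adimurthi--Sandeep) for the singular problem on a bounded domain. The guiding observation is that the weight $|x|^{-\beta}$ is problematic only in a neighborhood of the origin while the unboundedness of $\mb R^N$ is problematic only far from it, so a suitable decomposition of the domain separates the two difficulties.

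First I would establish the finiteness of the supremum. Fix $u\in W^{1,N}(\mb R^N)$ with $\|u\|\le1$ and $\frac{\al}{\al_N}+\frac{\beta}{N}\le1$, and split
\[
\int_{\mb R^N}\frac{\exp(\al|u|^{N/(N-1)})-S_{N-2}(\al,u)}{|x|^\beta}\,dx=\int_{B_1(0)}+\int_{\mb R^N\setminus B_1(0)}.
\]
On the exterior region $|x|\ge1$ the weight satisfies $|x|^{-\beta}\le1$, so Theorem \ref{TM-ineq} applied to $u$ (with $\al<\al_N$ following from $\frac{\al}{\al_N}+\frac{\beta}{N}\le 1$ and $\beta>0$; the case $\beta=0$ is Theorem \ref{TM-ineq} directly) bounds this piece uniformly. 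On $B_1(0)$ I would pick a smooth cutoff $\eta\in C_c^\infty(B_2(0))$ with $\eta\equiv1$ on $B_1(0)$, so that $\eta u\in W^{1,N}_0(B_2(0))$ and $|\eta u|=|u|$ on $B_1(0)$, then apply Theorem \ref{as} to $v:=\eta u/\|\eta u\|_{W^{1,N}_0(B_2)}$. The required threshold $\frac{\al\|\eta u\|^{N/(N-1)}}{\al_N}+\frac{\beta}{N}\le1$ has to be arranged carefully; to avoid loss in the cutoff estimate $\|\nabla(\eta u)\|_N\le\|\nabla u\|_N+C\|u\|_N$, one would first absorb the tail of the series $S_{N-2}$ (which in any case dominates the polynomial contribution of the cutoff correction) and then use that the strict inequality $\frac{\al}{\al_N}+\frac{\beta}{N}<1$ gives some room, treating the borderline case by a standard truncation/limit argument.

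Pointwise integrability for an arbitrary $u\in W^{1,N}(\mb R^N)$ (no norm constraint) follows by a rescaling trick: expand the exponential as $\sum_{k\ge N-1}\frac{\al^k|u|^{kN/(N-1)}}{k!\,|x|^\beta}$; each term is locally integrable by Hölder (since $\beta<N$) and, on $B_1^c$, controlled by do Ó's inequality applied to $\lambda u$ for $\lambda$ chosen small enough that $\al\lambda^{-N/(N-1)}\|\lambda u\|^{N/(N-1)}<\al_N$, with the tail summed via dominated convergence. For the necessity of $\frac{\al}{\al_N}+\frac{\beta}{N}\le1$, I would plug in the classical Moser concentrating sequence
\[
M_n(x)=\omega_{N-1}^{-1/N}\begin{cases}(\log n)^{(N-1)/N},&|x|\le 1/n,\\[2pt]\frac{\log(1/|x|)}{(\log n)^{1/N}},&1/n<|x|\le 1,\\[2pt]0,&|x|>1,\end{cases}
\]
suitably cut off to lie in $W^{1,N}(\mb R^N)$ with $\|M_n\|\to 1$, and observe that on $B_{1/n}(0)$ the integrand is of order $n^{\al/\al_N}\cdot n^{\beta-N}$, so the integral blows up whenever $\frac{\al}{\al_N}+\frac{\beta}{N}>1$.

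The chief technical obstacle is the cutoff step: multiplying $u$ by $\eta$ on $B_2(0)$ inflates the Dirichlet norm, and without care one loses the sharp constant $\al_N(1-\beta/N)$. Overcoming this requires either a careful comparison exploiting that the lower order terms $S_{N-2}(\al,u)$ absorb precisely the error introduced by $|\nabla\eta|\,|u|$, or a scaling argument that first dilates so that $u$ is essentially supported where the cutoff is identically one. The remaining steps are routine given Theorems \ref{TM-ineq} and \ref{as}.
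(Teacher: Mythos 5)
The paper does not prove this statement; it is quoted from Adimurthi--Yang \cite{ay} (with $\tau=1$), so there is no internal proof to compare against. Judged on its own terms, your proposal has a genuine gap at its core: the borderline case $\frac{\al}{\al_N}+\frac{\beta}{N}=1$, which is precisely the content of the theorem that goes beyond the two results you are combining. When $\beta=0$ the endpoint is $\al=\al_N$, and the claimed uniform bound over $\|u\|\le 1$ is exactly the Li--Ruf theorem; Theorem \ref{TM-ineq} only covers $\al<\al_N$ (and only under the separate constraints $\|\nabla u\|_{L^N(\mb R^N)}\le 1$, $\|u\|_{L^N(\mb R^N)}\le K$), so your remark that ``the case $\beta=0$ is Theorem \ref{TM-ineq} directly'' fails exactly where it matters. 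For $\beta>0$ the same problem reappears through the cutoff: $\|\nabla(\eta u)\|_{L^N}$ exceeds $\|\nabla u\|_{L^N}$ by a fixed multiple of $\|u\|_{L^N}$, which under the constraint $\|u\|\le1$ is only bounded by $1$, so the normalized function fed into Theorem \ref{as} need not meet the sharp threshold even in the strictly subcritical regime, let alone at the endpoint. Neither of your proposed repairs works: $S_{N-2}(\al,u)$ is merely the truncation of the exponential series and has nothing that could ``absorb'' a cutoff error, and a ``standard truncation/limit argument'' cannot produce the endpoint case --- uniformity at $\frac{\al}{\al_N}+\frac{\beta}{N}=1$ rests on the full-norm trade-off (if $\|\nabla u\|_{L^N}$ is close to $1$ then $\|u\|_{L^N}$ is forced close to $0$) together with a genuine symmetrization and blow-up analysis, as in Li--Ruf and Adimurthi--Yang.

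Two smaller points. The rescaling you invoke for the pointwise integrability claim is vacuous, since $\al\lambda^{-N/(N-1)}\|\lambda u\|^{N/(N-1)}=\al\|u\|^{N/(N-1)}$ does not depend on $\lambda$; and term-by-term integration of the series is not obviously summable, because controlling $\sum_k\frac{\al^k}{k!}\|u\|_{L^{kNp/(N-1)}}^{kN/(N-1)}$ is essentially the inequality itself. (That claim is nonetheless the easy part: the first halves of Theorems \ref{TM-ineq} and \ref{as}, which carry no norm restriction, handle $|x|\ge1$ and $|x|\le1$ respectively after a cutoff, or one uses the standard splitting $u=u_1+u_2$ with $u_1$ bounded of compact support and $\|u_2\|$ small.) The necessity direction via Moser functions is essentially correct, up to the exponent ($\exp(\al|M_n|^{N/(N-1)})\sim n^{N\al/\al_N}$ on $B_{1/n}(0)$, not $n^{\al/\al_N}$) and the need to renormalize $M_n$ by its full $W^{1,N}$ norm, which tends to $1$.
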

{\noi	The  natural energy functional  associated to the problem \eqref{pq} is the following:
\begin{align*}I(u)=\frac{1}{N}\displaystyle\int_{{\mathbb R^N} }(1+2^{N-1}|u|^{N})|\nabla u|^{N}dx+\frac 1N\int_{\mb R^N}V(x)|u|^N dx-\frac 12\int_{{\mathbb R^N}}\left(\int_{{\mathbb R^N}} \frac{F(y,u(y))}{|y|^\beta|x-y|^\mu}dy\right)\frac{F(x,u(x))}{|x|^\beta}dx.
\end{align*}	Note that  the term  $\displaystyle\int_{{\mathbb R^N}} u^{N}|\nabla u|^{N}dx$ is not finite for all $u\in W^{1,N}({\mathbb R^N}) $. Thus,  the functional $I$ is not well defined in $W^{1,N}({\mathbb R^N})$.}
To overcome this difficulty, we employ the following change of variables which was introduced in \cite{CJ}, namely, $w:=h^{-1}(u),$ where $h$ is defined by
\begin{equation}\label{g}
\left\{
\begin{array}{l}
h^{\prime}(s)=\displaystyle\frac{1}{\left(1+2^{N-1}|h(s)|^{N}\right)^{\frac{1}{N}}}~~\mbox{in}~~ [0,\infty),\\
h(s)=-h(-s)~~\mbox{in}~~ (-\infty,0].
\end{array}
\right.
\end{equation}
\noi	Now we state  some important and useful properties of $h$. For the detailed proofs of such results, one can see \cite{CJ,do} and references there in.
\begin{lemma}\label{L1} The function $h$ satisfies the following properties:
\begin{itemize}
\item[$(h_1)$] $h$ is uniquely defined, $C^{\infty}$ and invertible;
\item[$(h_2)$] $h(0)=0$;
\item[$(h_3)$] $0<h^{\prime}(s)\leq 1$ for all $s\in \mathbb{R}$;
\item[$(h_4)$] $\frac{1}{2}h(s)\leq sh^{\prime}(s)\leq h(s)$ for all $s>0$;
\item[$(h_5)$] $|h(s)|\leq |s|$ for all $s\in \mathbb{R}$;
\item[$(h_6)$] $|h(s)|\leq 2^{1/{(2N)}}|s|^{1/2}$ for all $s\in \mathbb{R}$;
\item[$(h_7)$] $\ds \lim_{s\to+\infty}\frac {h(s)}{s^{\frac 12}}=2^{\frac{1}{2N}}$;
\item[$(h_8)$]  $|h(s)|\geq h(1)|s|$ for $|s|\leq 1$ and $|h(s)|\geq h(1)|s|^{1/2}$ for $|s|\geq 1$;
\item[$(h_9)$] $h^{\prime \prime}(s)<0$ when $s>0$ and $h^{\prime \prime}(s)>0$ when $s<0$.
\item[$(h_{10})$] $\displaystyle\lim_{s\to 0}\frac{h(s)}{s}=1.$ 
\end{itemize}
\begin{example}
One of the examples of such functions is given in 
\begin{figure}[H]
\includegraphics[width=80mm, height=50mm]{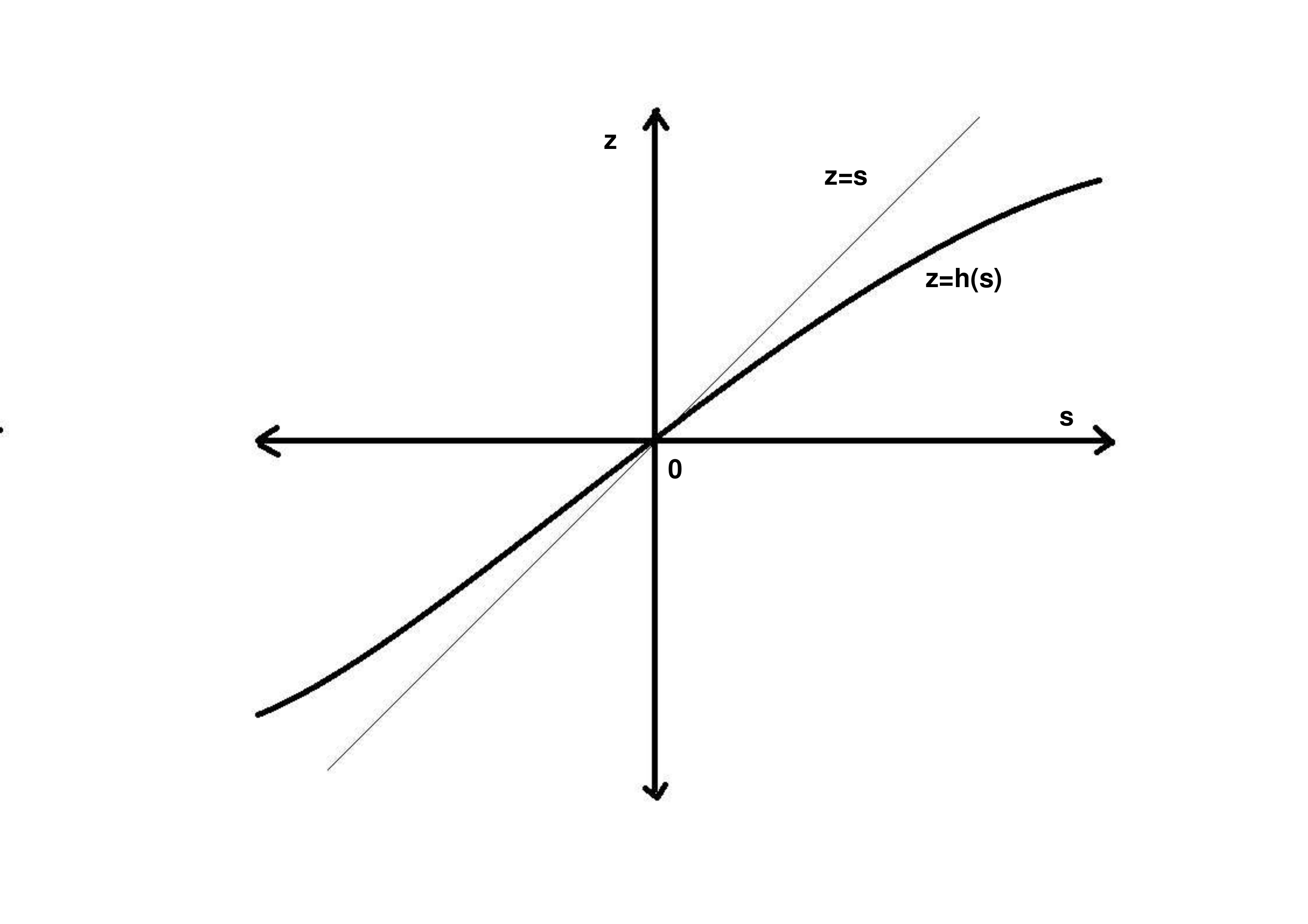}
\caption{Plot of the function $h$}
\label{pic}
\end{figure}	
\end{example}
\end{lemma}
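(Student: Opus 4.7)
The plan is to view $h$ as the maximal solution of the Cauchy problem
\[
h'(s) = \phi(h(s)), \qquad h(0) = 0, \qquad \phi(y) := \bigl(1 + 2^{N-1}|y|^N\bigr)^{-1/N},
\]
extended to $s < 0$ by oddness, and to read off each property either from the ODE directly or from its inverse relation
\[
s \;=\; \int_0^{h(s)} g(r)\,dr, \qquad g(r) := \bigl(1+2^{N-1}|r|^N\bigr)^{1/N} = 1/\phi(r),
\]
obtained by separation of variables.

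First, $(h_1)$ follows from Picard–Lindel\"of applied to the smooth right-hand side $\phi$, combined with the a priori bound $|h'|\le 1$ that forces the maximal solution to be global and $C^\infty$; invertibility is immediate from $h' > 0$. Properties $(h_2)$, $(h_3)$ come from the definition and the bound $1 + 2^{N-1}|h|^N \ge 1$, while $(h_5)$ follows from $|h'|\le 1$ upon integration, and $(h_{10})$ from $h'(0) = 1$. For $(h_9)$, differentiating the ODE on $s > 0$ gives
\[
h''(s) = -\,2^{N-1}\, h(s)^{N-1}\,(h'(s))^{N+2},
\]
which is strictly negative for $s>0$, and positive for $s<0$ by oddness.

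The heart of the lemma is $(h_4)$. Using the inverse relation, for $s > 0$ one can write $sh'(s) = G(\tau)/g(\tau)$ with $\tau := h(s) > 0$ and $G(\tau) := \int_0^\tau g$. Since $g$ is strictly increasing, $G(\tau) \le \tau g(\tau)$, giving the upper bound $sh'(s)\le h(s)$. For the lower bound $sh'(s)\ge h(s)/2$, I will verify by a direct computation that $g''\ge 0$ on $[0,\infty)$, so $g$ lies above its tangent at $\tau$; integrating $g(r) \ge g(\tau) + g'(\tau)(r-\tau)$ on $[0,\tau]$ produces $G(\tau) \ge \tau g(\tau) - \tau^2 g'(\tau)/2$, and the algebraic identity $g(\tau)/g'(\tau) = (1+2^{N-1}\tau^N)/(2^{N-1}\tau^{N-1}) \ge \tau$ then yields $G(\tau) \ge \tau g(\tau)/2$, as required. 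I expect this convexity/tangent step to be the main technical obstacle, since it is the only place where non-trivial geometry of $g$ is invoked.

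With $(h_4)$ and $(h_9)$ in hand, the remaining properties are short. For $(h_6)$, rewrite the ODE as $(h')^N(1+2^{N-1}h^N) = 1$, which gives $2^{N-1}(hh')^N \le 1$, hence $hh' \le 2^{-(N-1)/N}$; integrating from $0$ to $s$ yields $h(s)^2 \le 2^{1/N} s$. For $(h_7)$, the inverse relation $s = G(h(s))$ together with $g(r) \sim 2^{(N-1)/N} r$ as $r\to\infty$ implies $s \sim 2^{(N-1)/N}\,h(s)^2/2$, so $h(s)/s^{1/2} \to 2^{1/(2N)}$. Finally, $(h_8)$ follows from $(h_4)$ by monotonicity: the derivatives $(h(s)/s)' = (sh'(s) - h(s))/s^2 \le 0$ and $(h(s)/s^{1/2})' = (2sh'(s) - h(s))/(2s^{3/2}) \ge 0$ show $s \mapsto h(s)/s$ is non-increasing and $s \mapsto h(s)/s^{1/2}$ non-decreasing on $(0,\infty)$; evaluating at $s = 1$ and extending by oddness gives both claimed estimates.
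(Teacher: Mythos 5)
Your proposal is correct, and it is worth noting that the paper itself offers no proof of this lemma at all: it simply defers to \cite{CJ,do}. Your argument is therefore a genuine addition rather than a paraphrase. All the individual verifications check out: the formula $h''=-2^{N-1}h^{N-1}(h')^{N+2}$ is right, the identity $g/g'=(1+2^{N-1}\tau^N)/(2^{N-1}\tau^{N-1})\ge\tau$ is right, and $g''(r)=2^{N-1}(N-1)r^{N-2}(1+2^{N-1}r^N)^{(1-2N)/N}\ge0$ does hold, so the convexity/tangent-line step you flagged as the main obstacle goes through. For comparison, the route usually taken in the cited references for the lower bound in $(h_4)$ is an integration by parts in the inverse relation: writing $s=G(\tau)=\int_0^\tau g$, one gets $G(\tau)=\tau g(\tau)-\int_0^\tau \frac{2^{N-1}t^N}{1+2^{N-1}t^N}\,g(t)\,dt\ge \tau g(\tau)-G(\tau)$, hence $G(\tau)\ge\tau g(\tau)/2$ directly, without invoking convexity of $g$; your version costs one extra computation ($g''\ge0$) but is equally valid and arguably more geometric. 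Two minor caveats. First, in $(h_1)$ the right-hand side $\phi(y)=(1+2^{N-1}|y|^N)^{-1/N}$ is smooth only away from $y=0$ when $N$ is odd, so the $C^\infty$ regularity of $h$ at the origin deserves a remark (this imprecision is inherited from the paper's own statement, which asserts it without proof). Second, in $(h_8)$ you should say explicitly that monotonicity of $h(s)/s$ and $h(s)/s^{1/2}$ is being used on the intervals $(0,1]$ and $[1,\infty)$ respectively, in opposite directions; as written the evaluation ``at $s=1$'' is correct but slightly telegraphic.
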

{\noi	After applying the change of variable  $w=h^{-1}(u),$ we define the new functional 
\begin{align}\label{energy}J(w)=\frac{1}{N}\displaystyle\int_{{\mathbb R^N}} |\nabla w|^{N}dx+\frac 1N \int_{{\mathbb R^N} } V(x)|h(w)|^N dx-\frac 12\int_{{\mathbb R^N}}\left(\int_{{\mathbb R^N}} \frac{F(y,h(w))}{ |y|^\beta|x-y|^\mu}dy\right)\frac{F(x,h(w))}{|x|^\beta}dx
\end{align}
for $w$ in some appropriate Sobolev space ($ W^{1,N}({\mathbb R^N}) \text{ or } E$, as per the convenience).}
Now using Lemma \ref{L1}-$(h_5)$, we get \begin{equation}\label{go}
	\int_{\mb R^N} V(x)	|h(w)|^N dx\leq\int_{\mb R^N} V(x)	|w|^N dx<\left\{
	\begin{array}{l}
		+\infty~~\mbox{ on}\;\;\; E,\;\;\;\;\;\;\;\;\;\;\;\;\;\;\;\; \text{ if $V$ satisfies $(V_2)$},\\
		+\infty~~\mbox{ on}\;\;\; W^{1,N}({\mathbb R^N}),\;\; \text{ if $V$ satisfies $(V_2^\prime)$}.
	\end{array}
	\right.
\end{equation}
\noi Moreover, from the assumptions $(f_2)$-$(f_3)$, we obtain that for any $\e>0$, $r\geq N$,  there exist  positive constants $ C(r,\e)>0$, $\al>\al_0>0$  such that
\begin{align} 
|F(x,s)| \le \e |s|^N + C(r,\e) |s|^r \left[\exp\left(\al|s|^{\frac{2N}{N-1}}\right)-S_{N-2}(\al, s^2)\right]\;\; \text{for all}\; (x,s)\in {\mathbb R^N} \times \mb R.\label{k1}
\end{align}
Thus, in light of the Sobolev embedding, for any $v \in W^{1,N}({\mathbb R^N})$,
\begin{equation}\label{KC-new1}
F(x,v) \in L^{q}({\mathbb R^N})\mbox{ for any }q\geq N.
\end{equation} Now by \eqref{g}, if $w\in W^{1,N}({\mathbb R^N}),$ then $h(w)\in W^{1,N}({\mathbb R^N})$. Therefore,
Proposition \ref{HLS}  with $t = s$ and  $\vartheta=\beta (\geq 0)$ implies that
\begin{align}\label{k2}
\int_{{\mathbb R^N}}\left(\int_{{\mathbb R^N}}\frac{F(y,h(w))}{|y|^\beta|x-y|^{\mu}}dy\right)\frac{F(x,h(w))}{|x|^\beta}dx  \leq C(N,\mu,\beta){\|F(\cdot,h(w))\|^2_{L^{\frac{2N}{{2N-2\beta-\mu}}}({\mathbb R^N})}}.
\end{align} This together with the properties of $f$ and $h$, \eqref{go} and \eqref{KC-new1} yields that $J$ is well defined  and $J$ is a  $C^1$ functional.
{A function $w \in W^{1,N}({\mathbb R^N})$ is a critical point of the functional $J$,  if for every $v \in  W^{1,N}({\mathbb R^N})$, we have
\begin{align}\label{weak}
&\int_{{\mathbb R^N}} |\nabla w|^{N-2}\nabla w\nabla v dx+\int_{\mb R^N} V(x)|h(w)|^{N-2}h(w)h'(w)v dx\notag\\&\qquad\qquad\qquad\qquad-\int_{{\mathbb R^N}}\left(\int_{{\mathbb R^N}} \frac{F(y,h(w))}{|y|^\beta|x-y|^\mu}dy\right)\frac{f(x,h(w))}{|x|^\beta}h'(w)v(x)dx=0.
\end{align}   
Therefore,    $w$ is a weak solution   to the following problem:
\begin{equation}\label{pp}
-\Delta_N w + V(x)|h(w)|^{N-2}h(w)h'(w)=
\left(\displaystyle\int_{\mb R^N}\frac{F(y,h(w))}{|y|^\beta|x-y|^{\mu}}~dy\right)\frac{f(x,h(w))} {|x|^\beta}h'(w) \; \text{in}\;
{\mathbb R^N}.
\end{equation}}
As in \cite{sv-1}, it can be verified  that the transformed problem \eqref{pp} is equivalent to the problem \eqref{pq}, which takes $u = h(w)$ as its solution. Thus,  it is enough to show  the existence of solution to  \eqref{pp} by finding the critical point of $J$, and then apply the transformation $h$ on that solution, which will serve as the solution to the problem \eqref{pq}.\\
{Next, we have the following two technical results for the later consideration.}
\begin{lemma}\label{mp-inq}
For any $p\ge 1, \al>0$, it holds that
\begin{align*}
\left(\exp\left(\al|s|^{\frac{N}{N-1}}\right)-S_{N-2}(\al, s)\right)^p\leq \exp\left(p\al|s|^{\frac{N}{N-1}}\right)-S_{N-2}(p\al, s), \text{\;\; for all $s\in \mb R.$}
\end{align*} \end{lemma}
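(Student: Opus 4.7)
The plan is to reduce the claim to a scalar inequality and then settle it by a monotonicity argument. Setting $t:=\alpha|s|^{N/(N-1)}\geq 0$, the identity $\alpha^{m}|s|^{mN/(N-1)}=t^{m}$ and $(p\alpha)^{m}|s|^{mN/(N-1)}=(pt)^{m}$ transform the statement of Lemma~\ref{mp-inq} into the equivalent one-variable inequality
\[
\Phi(t)^{p}\leq \Phi(pt)\qquad\text{for all }t\geq 0,\ p\geq 1,
\]
where
\[
\Phi(t):=e^{t}-\sum_{m=0}^{N-2}\frac{t^{m}}{m!}=\sum_{m=N-1}^{\infty}\frac{t^{m}}{m!}.
\]
The case $t=0$ is trivial since both sides vanish; the case $s<0$ is automatic as only $|s|$ appears. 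So I may restrict to $t>0$ and introduce $H(t):=\Phi(pt)/\Phi(t)^{p}$, with the aim of proving $H\geq 1$ on $(0,\infty)$.

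I would deduce $H\geq 1$ from two facts: $\lim_{t\to\infty}H(t)=1$ and $H$ is non-increasing on $(0,\infty)$. The asymptotic limit is immediate because the subtracted polynomial is negligible compared with $e^{t}$, so $\Phi(t)\sim e^{t}$ and hence both $\Phi(t)^{p}$ and $\Phi(pt)$ are asymptotic to $e^{pt}$.

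For the monotonicity I would differentiate the logarithm:
\[
(\log H)'(t)=p\left(\frac{\Phi'(pt)}{\Phi(pt)}-\frac{\Phi'(t)}{\Phi(t)}\right).
\]
Term-by-term differentiation of the power series yields the key identity $\Phi'(u)=\Phi(u)+u^{N-2}/(N-2)!$, and therefore
\[
\frac{\Phi'(u)}{\Phi(u)}=1+\frac{1}{(N-2)!}\cdot\frac{u^{N-2}}{\Phi(u)}.
\]
Since $pt\geq t$, it suffices to verify that $u\mapsto \Phi(u)/u^{N-2}$ is non-decreasing on $(0,\infty)$, because this is exactly what is needed to force the bracketed difference above to be non-positive. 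But this final monotonicity is transparent from
\[
\frac{\Phi(u)}{u^{N-2}}=\sum_{m=N-1}^{\infty}\frac{u^{m-N+2}}{m!},
\]
every exponent $m-N+2\geq 1$ being at least one, so the ratio is a sum of non-decreasing non-negative functions on $[0,\infty)$. Combining the limit at infinity with the monotonicity of $H$ gives $H(t)\geq 1$ for all $t>0$, which is the desired inequality. The only mildly delicate step is selecting the correct monotone ratio $\Phi(u)/u^{N-2}$ to examine; once that reformulation is made, the remaining computations are routine.
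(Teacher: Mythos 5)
Your argument is correct, and it is substantially more complete than the one in the paper. After the substitution $t=\al|s|^{N/(N-1)}$ the claim is indeed equivalent to $\Phi(t)^p\le\Phi(pt)$ with $\Phi(t)=\sum_{m=N-1}^{\infty}t^m/m!$, and each of your steps checks out: the identity $\Phi'(u)=\Phi(u)+u^{N-2}/(N-2)!$ follows from term-by-term differentiation and reindexing (and degenerates correctly to $\Phi'(u)=e^u$ when $N=2$); the logarithmic derivative of $H(t)=\Phi(pt)/\Phi(t)^p$ then reduces to comparing $u^{N-2}/\Phi(u)$ at $u=t$ and $u=pt$; and the monotonicity of $\Phi(u)/u^{N-2}=\sum_{m\ge N-1}u^{m-N+2}/m!$ is immediate since every exponent is at least $1$. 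Combined with $H(t)\to 1$ as $t\to\infty$, the non-increasing property forces $H\ge 1$, and your treatment of the boundary cases $t=0$, $s<0$, and general real $p\ge1$ is sound.

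The paper, by contrast, offers only a one-line sketch: it says the result ``follows by analyzing the limits'' of the ratio as $|s|\to0$ and $|s|\to+\infty$ via L'Hospital's rule. Knowing the two boundary limits of the ratio does not by itself yield the global inequality, so the paper's proof as written has a gap that your monotonicity argument is precisely what is needed to fill. In that sense you take the same starting point (study the ratio and its limit at infinity) but add the essential new ingredient --- the log-derivative computation built on $\Phi'=\Phi+u^{N-2}/(N-2)!$ --- that turns the limit analysis into a proof. An alternative, more combinatorial route (expanding $\Phi(t)^p$ by the multinomial theorem and comparing coefficients with those of $\Phi(pt)$) only works directly for integer $p$, so your analytic argument has the added benefit of covering all real $p\ge1$ as the lemma requires.
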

\begin{proof}
The proof follows by analyzing the limits of the expression $$\ds\frac{\left(\exp\left(\al|s|^{\frac{N}{N-1}}\right)-S_{N-2}(\al, s)\right)^p}{\exp\left(p\al|s|^{\frac{N}{N-1}}\right)-S_{N-2}(p\al, s)},$$ as $|s|\to 0$ and $|s|\to+\infty$ 
using L'Hospital's rule.
\end{proof}
{\begin{lemma}\label{techn}
Let the function $h$ be defined as in \eqref{g}. Then for any $w\in W^{1,N}(\mb R^N)$ and $p>0$, we have
\[\exp\left(p|h(w)|^{\frac{2N}{N-1}}\right)-S_{N-2}\left (p, |h(w)|^2\right)\leq \exp\left(p2^{\frac{1}{N-1}}|w|^{\frac{N}{N-1}}\right)-S_{N-2}\left(p2^{\frac{1}{N-1}}, |w|\right).\]
\end{lemma}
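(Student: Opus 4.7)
The plan is to expand both sides as power series and compare term by term, using property $(h_6)$ of the function $h$. Recall that $S_{N-2}(\alpha,u)=\sum_{m=0}^{N-2}\alpha^{m}|u|^{mN/(N-1)}/m!$ is exactly the partial sum of the exponential series. Hence
\[
\exp\bigl(p|h(w)|^{\tfrac{2N}{N-1}}\bigr)-S_{N-2}(p,|h(w)|^{2})=\sum_{m=N-1}^{\infty}\frac{p^{m}\,|h(w)|^{\tfrac{2mN}{N-1}}}{m!},
\]
and similarly
\[
\exp\bigl(p2^{\tfrac{1}{N-1}}|w|^{\tfrac{N}{N-1}}\bigr)-S_{N-2}\bigl(p2^{\tfrac{1}{N-1}},|w|\bigr)=\sum_{m=N-1}^{\infty}\frac{\bigl(p2^{\tfrac{1}{N-1}}\bigr)^{m}|w|^{\tfrac{mN}{N-1}}}{m!}.
\]
So it suffices to establish the pointwise bound
\[
p^{m}\,|h(w)|^{\tfrac{2mN}{N-1}}\le\bigl(p2^{\tfrac{1}{N-1}}\bigr)^{m}|w|^{\tfrac{mN}{N-1}}\qquad\text{for every integer }m\ge N-1.
\]

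To obtain this term-by-term inequality I would invoke $(h_6)$, which says $|h(s)|\le 2^{1/(2N)}|s|^{1/2}$ for every $s\in\mathbb R$. Squaring gives $|h(w)|^{2}\le 2^{1/N}|w|$, and raising to the power $mN/(N-1)$ yields
\[
|h(w)|^{\tfrac{2mN}{N-1}}=\bigl(|h(w)|^{2}\bigr)^{\tfrac{mN}{N-1}}\le 2^{\tfrac{m}{N-1}}|w|^{\tfrac{mN}{N-1}},
\]
which is exactly what is needed after multiplying by $p^{m}$. Summing these inequalities over $m\ge N-1$ and dividing by $m!$ yields the claim of the lemma.

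There is no real obstacle here: the only substantive ingredient is $(h_6)$, and the rest is bookkeeping with series expansions. Optionally I would remark that the identity of the tails (i.e., the fact that the truncating polynomial $S_{N-2}$ exactly matches the first $N-1$ terms of the exponential series) is what makes the term-by-term comparison produce the stated inequality without leftover boundary terms; this is the reason the lemma is tailored to the same cut-off index $N-2$ on both sides.
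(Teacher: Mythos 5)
Your proof is correct and is essentially the same as the paper's: the authors also write both sides as the tail series $\sum_{m=N-1}^{\infty}$ and compare term by term via $(h_6)$, i.e.\ $|h(w)|^{2}\leq 2^{1/N}|w|$ raised to the power $mN/(N-1)$. No gaps; your additional remark about the matching cut-off index $N-2$ is accurate but not needed.
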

\begin{proof}Recalling Lemma \ref{L1}-$(h_6)$,  we obtain
\begin{align*}\exp\left(p|h(w)|^{\frac{2N}{N-1}}\right)-S_{N-2}\left(p, |h(w)|^2\right)=\ds\sum_{m=N-1}^{\infty}\frac{p^m|h(w)|^{\frac{2m N}{N-1}}}{m!}&\leq \ds\sum_{m=N-1}^{\infty}\frac{\left( p 2^{\frac{1}{N-1}}\right)^m|w|^{\frac{m N}{N-1}}}{m!}.
\end{align*}Thus, the result is proved.
\end{proof}
\section{Analysis of the Cerami sequence and convergence results}
\noi In this section, we discuss the behaviour of the Cerami sequence of $J$ and prove some convergence results. The sequence $\{w_k\}$ in  $W^{1,N}({\mathbb R^N})$ is called a Cerami sequence of $J$ at level $c\in\mb R$, if
\begin{align*}
J(w_k) \to c\,; \text{\;\; and \;}  \;\left(1+\|w_k\|\right) J^{\prime}(w_k) \to 0 \text{\; in \;} (W^{1,N}({\mathbb R^N}))^*\text{\;\;\;  as $ k \to+\infty$.}
\end{align*}
\begin{lemma}\label{lem712}
Let \eqref{be} hold. Suppose  the function  $h$ is defined in \eqref{g} and let the hypotheses $(V_1)$, $(V_2^\prime)$ on $V$ and $(f_1)$ and $(f_5)$ on $f$ be satisfied. Then every Cerami sequence of $J$ is bounded in $W^{1,N}({\mathbb R^N})$. 
\end{lemma}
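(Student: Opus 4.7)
The plan is to extract boundedness by testing the derivative $J'(w_k)$ against two carefully chosen test functions. Following the standard strategy for quasilinear problems with the change of variable $u=h(w)$, the first test function will be $\varphi_k := h(w_k)/h'(w_k)$, which neatly diagonalises the potential and nonlinear contributions, and the second will be $w_k$ itself, which directly exposes $\int |\nabla w_k|^N\,dx$. Throughout, the Cerami property $(1+\|w_k\|)\|J'(w_k)\|_* \to 0$ is used.

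For the first test, differentiating the defining relation $h'(s) = (1+2^{N-1}|h(s)|^N)^{-1/N}$ yields $\frac{d}{ds}\bigl(h(s)/h'(s)\bigr) = (1+2^N|h(s)|^N)/(1+2^{N-1}|h(s)|^N) \le 2$, and together with $(h_5)$--$(h_6)$ this produces the pointwise bound $|h(w)/h'(w)| \le 3|w|$. Hence $\|\varphi_k\|_{W^{1,N}} \le 3\|w_k\|_{W^{1,N}}$, so Cerami gives $|J'(w_k)\varphi_k| \le 3\varepsilon_k$. A direct computation now produces the algebraic identity
\begin{align*}
2NJ(w)-J'(w)\varphi &= \int_{\mathbb{R}^N}\frac{|\nabla w|^N}{1+2^{N-1}|h(w)|^N}\,dx+\int_{\mathbb{R}^N}V(x)|h(w)|^N\,dx \\
&\quad + A(w)-NB(w),
\end{align*}
where $A(w)$ and $B(w)$ denote the Stein--Weiss double integrals with respective integrands $F(y,h(w))f(x,h(w))h(w(x))/(|y|^\beta|x-y|^\mu|x|^\beta)$ and $F(y,h(w))F(x,h(w))/(|y|^\beta|x-y|^\mu|x|^\beta)$. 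By $(f_1)$ and $(f_5)$, $A(w) \ge \ell B(w)$, whence $A(w)-NB(w) \ge \frac{\ell-N}{\ell}A(w) \ge 0$. Applying the identity with $w=w_k$, the left-hand side is $O(1)$, and we extract three simultaneous bounds:
\begin{align*}
\int_{\mathbb{R}^N}V(x)|h(w_k)|^N\,dx \le C,\qquad \int_{\mathbb{R}^N}\frac{|\nabla w_k|^N}{1+2^{N-1}|h(w_k)|^N}\,dx \le C,\qquad A(w_k) \le C.
\end{align*}

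For the second test, rearranging $J'(w_k)w_k$ gives
\begin{align*}
\int_{\mathbb{R}^N}|\nabla w_k|^N\,dx &= J'(w_k)w_k - \int_{\mathbb{R}^N}V(x)|h(w_k)|^{N-2}h(w_k)h'(w_k)w_k\,dx \\
&\quad + \int_{\mathbb{R}^N}\!\int_{\mathbb{R}^N}\frac{F(y,h(w_k))f(x,h(w_k))h'(w_k)w_k}{|y|^\beta|x-y|^\mu|x|^\beta}\,dx\,dy.
\end{align*}
The first term is $O(\varepsilon_k)$ by Cerami; the second is nonnegative since $|h(s)|^{N-2}h(s)h'(s)s \ge 0$ for every $s\in\mathbb{R}$ by $(h_3)$ and the oddness of $h$, so it may be dropped to produce an upper bound; the third is dominated by $A(w_k)$ because $(h_4)$ gives $h'(w)w \le h(w)$ on $\{w>0\}$ while $f(x,h(w)) \equiv 0$ on $\{w \le 0\}$ by $(f_1)$. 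Hence $\int |\nabla w_k|^N \le C$. To conclude $\int |w_k|^N \le C$, observe that $|\nabla h(w_k)|^N = |\nabla w_k|^N/(1+2^{N-1}|h(w_k)|^N)$, so the preceding estimates show $\{h(w_k)\}$ is bounded in $W^{1,N}(\mathbb{R}^N)$, hence in $L^{2N}(\mathbb{R}^N)$ by Sobolev embedding. On $\{|w_k|\le 1\}$, $(h_8)$ yields $|w_k|^N \le h(1)^{-N}|h(w_k)|^N$, and on $\{|w_k|>1\}$, $(h_8)$ yields $|w_k|^N \le h(1)^{-2N}|h(w_k)|^{2N}$; integrating gives $\int |w_k|^N \le C$.

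The main obstacle is that, unlike in the semilinear setting, the AR-type combination $2NJ(w)-J'(w)\varphi$ only controls the weighted quantity $\int |\nabla w|^N/(1+2^{N-1}|h(w)|^N)$, which is strictly weaker than $\int |\nabla w|^N$ wherever $|w|$ is large, so a single test function cannot deliver a $W^{1,N}$ bound. The fix is the combination described above: testing additionally against $w_k$ and exploiting the already-obtained bound on $A(w_k)$ to unlock $\int |\nabla w_k|^N$, and then recovering $\int |w_k|^N$ from the Sobolev embedding of $h(w_k)$ together with the square-root growth of $h$ at infinity encoded in $(h_8)$.
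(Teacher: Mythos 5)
Your proof is correct and follows essentially the same route as the paper: both test $J'(w_k)$ against $v_k=h(w_k)/h'(w_k)$, use $(f_5)$ to discard the nonlocal term, and recover $\int_{\mathbb R^N}|w_k|^N\,dx$ from $(h_8)$ together with the boundedness of $h(w_k)$ in $W^{1,N}(\mathbb R^N)$ (you via the Sobolev embedding into $L^{2N}$, the paper via Gagliardo--Nirenberg, which is the same estimate). The one genuine deviation is your choice of coefficient: the paper forms $J(w_k)-\tfrac{1}{2\ell}\langle J'(w_k),v_k\rangle$, for which the coefficient of $|\nabla w_k|^N$ is $\tfrac1N-\tfrac{1}{2\ell}\bigl(2-\tfrac{1}{1+2^{N-1}|h(w_k)|^N}\bigr)\ge\tfrac1N-\tfrac1\ell>0$, so the full gradient bound drops out of a single test; your combination $2NJ-\langle J',\varphi\rangle$ kills the nondegenerate part of the gradient term and forces the extra test against $w_k$. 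That extra step is carried out correctly (the potential term has the right sign and $(h_4)$ with $(f_1)$ dominates the nonlocal term by $A(w_k)$), but your closing claim that ``a single test function cannot deliver a $W^{1,N}$ bound'' is not accurate --- it is only your particular linear combination that loses this information.
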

\begin{proof}
Let $\{w_k\} \subset W^{1,N}({\mathbb R^N})$ be a Cerami sequence of $J$ at level $c\in\mb R$.
Then, as $k \to+\infty$, we have
\begin{align}\label{kc-PS-bdd0}
J(w_k)=	&\frac 1N \int_{\mb R^N}|\nabla w_k|^N dx+\frac 1N \int_{\mb R^N} V(x) |h(w_k)|^N dx - \frac12 \int_{{\mathbb R^N}} \left(\int_{{\mathbb R^N}} \frac{F(y,h(w_k))}{|y|^\beta|x-y|^{\mu}}dy \right)\frac{ F(x,h (w_k))}{|x|^\beta}~dx \to c,
\end{align} and for any $\phi\in W^{1,N}(\mb R^N)$, 
\begin{align}\label{kc-PS-bdd1}
&\left(1+\|w_k\|\right)	\left|  \langle J^{\prime}(w_k),\phi\rangle \right|\notag\\&=\left(1+\|w_k\|\right)	\left| \int_{{\mathbb R^N}}{|\nabla w_k|^{N-2}\nabla w_k \nabla\phi}{dx}+\int_{\mb R^N} V(x) |h(w_k)|^{N-2} h(w_k)h'(w_k)\phi dx\right.\notag\\
&\quad\quad\quad\quad\quad\quad\quad\quad\quad\left. -\int_{\mathbb R^N} \left(\int_{\mathbb R^N} \frac{F(y,h(w_k))}{|y|^\beta|x-y|^{\mu}}dy \right)\frac{f(x,h(w_k))}{|x|^\beta} h'(w_k)\phi ~dx \right|\leq\e_k\|\phi\|,
\end{align}
where $\e_k \to 0$ as $k\to+\infty$.  In the last relation, taking $\phi=w_k$, we get 
\begin{equation}\label{kc-PS-bdd2}
\left| \int_{\mb R^N}|\nabla w_k|^{N}dx+\int_{\mb R^N} V(x) |h(w_k)|^{N} dx-\int_{\mathbb R^N} \left(\int_{\mathbb R^N} \frac{F(y,h(w_k))}{|y|^\beta|x-y|^{\mu}}dy \right)\frac{f(x,h(w_k)}{|x|^\beta} h'(w_k) w_k ~dx \right|\leq \e_k.
\end{equation}
Now set $$v_k:=\frac{h(w_k)}{h'(w_k)}.$$  By Lemma \ref{L1}-$(h_4)$, we have $|v_k|\leq 2 |w_k|.$  Moreover,  using \eqref{g} and Lemma \ref{L1}-$(h_5)$, we get 
$$|\nabla v_k|^N= \left(1+\frac{2^{N-1}|h(w_k)|^N}{1+2^{N-1}|h(w_k)|^N}\right)|\nabla w_k|^N\leq 2|\nabla w_k|^N. $$	 Therefore, 
$$\|v_k\|\leq2\|w_k\|.$$ Therefore, $v_k\in W^{1,N}({\mathbb R^N})$ and hence, in \eqref{kc-PS-bdd1}, taking $\phi=v_k$, it follows that
\begin{align}\label{1.0}
\left|\langle J'(w_k), v_k\rangle\right|&=\bigg|\int_{\mathbb R^N}\left(1+\frac{2^{N-1}|h(w_k)|^N}{1+2^{N-1}|h(w_k)|^N}\right)|\nabla w_k|^N dx+\int _{\mb R^N}V(x)|h(w_k)|^{N} dx\notag\\&\;\;\;\;\;\;-\int_{\mathbb R^N} \left(\int_{\mathbb R^N} \frac{F(y,h(w_k))}{|y|^\beta|x-y|^{\mu}}dy \right)\frac{f(x,h(w_k))} {|x|^\beta}h(w_k) dx\bigg|\notag\\
&\leq\e_k\frac{\|v_k\|}{(1+\|w_k\|)}\leq 2\e_k.
\end{align}	
Recalling \eqref{kc-PS-bdd0}, \eqref{1.0}  and $(f_5)$, we get
\begin{align}\label{bdd0}
c+\frac {\e_k}{\ell}&\geq J(w_k)-\frac{1}{2\ell}\langle J'(w_k), v_k\rangle\notag\\
&\geq \int_{\mathbb R^N}\left[\frac1N-\frac{1}{2\ell}\left(1+\frac{2^{N-1}|h(w_k)|^N}{1+2^{N-1}|h(w_k)|^N}\right)\right]|\nabla w_k|^N dx+\int_{\mb R^N} \left(\frac1N-\frac{1}{2\ell}\right)V(x)|h(w_k)|^N dx\notag\\&\;\;\;\;\;\;-\frac12\int_{\mathbb R^N} \left(\int_{\mathbb R^N} \frac{F(y,h(w_k))}{|y|^\beta|x-y|^{\mu}}dy \right)\left[\frac{F(x,h(w_k))}{|x|^\beta}-\frac1\ell \frac{f(x,h(w_k))}{|x|^\beta} h(w_k)\right] dx\notag\\
&\geq \int_{\mathbb R^N}\left(\frac1N-\frac{1}{\ell}\right)|\nabla w_k|^N dx+\int_{\mb R^N} \left(\frac1N-\frac{1}{2\ell}\right)V(x)|h(w_k)|^N dx.
\end{align}\label{bdd2}
This yields that there is a constant $C$ (independent of $k, w_k, v_k$ ) such  that \begin{align}\label{bdd1}\int _{\mb R^N}|\nabla w_k|^Ndx+\int_{\mb R^N}V(x)|h(w_k)|^Ndx<C,
\end{align} since $\ell>N$. 
{Using \eqref{bdd1} together with  Lemma \ref{L1}-$(h_5)$,$(h_8)$, $(V_1)$ and the Gagliardo-Nirenberg interpolation inequality, we deduce
\begin{align*}
\|w_k\|_{L^N(\mb R^N)}^N&=	\int_{\mb R^N} |w_k|^N dx=\int_{\{w_k\leq 1\}} |w_k|^N dx+\int_{\{w_k>1\}} |w_k|^N dx\\
&\leq \frac {1} { V_0(h(1))^N}\int_{ \{w_k\leq 1\}} V(x) |h(w_k)|^N dx+\frac {1} {(h(1))^N}\int_{\{w_k>1\}} |h(w_k)|^{2N}dx\\
&\leq C+C \|\nabla h(w_k) \|_{L^N{(\{w_k>1\})}}^N\| h(w_k) \|_{L^N{(\{w_k>1\})}}^N\\
&\leq C+\frac{ C}{{ V_0}}\|h'(w_k)\nabla w_k \|_{L^N{(\mb R^N)}}^N  \int_{\mb R^N}V(x)|h(w_k)|^N dx \leq C.		
\end{align*}}
\noi From the last estimate, we  infer that the sequence $\{w_k\}$ is bounded in $W^{1,N}(\mb R^N)$. This concludes the lemma. 
\end{proof}\noi Let $\{w_k\}\subset W^{1,N}({\mathbb R^N})$ be a Cerami sequence for $J$.
Then Lemma \ref{lem712} yields that $\{w_k\}$  is bounded in $W^{1,N}({\mathbb R^N})$. Thus, there exists  $w\in W^{1,N}({\mathbb R^N})$ such that up to a subsequence, still denoted by $\{w_k\}$, as $k \to+\infty$
\begin{equation}\label{cnvv}\left.
\begin{split}&w_k \rightharpoonup w \text { \;\;\; weakly in } W^{1,N}({\mathbb R^N}),\\
&w_k \to w \text{\;\; strongly in } L_{loc}^q({\mathbb R^N}),\,\text{for all \;\;} q \in [1,\infty),\\
&w_k(x) \to w(x)\text{ \;\;\; point-wise a.e. in } {\mathbb R^N}.\end{split}\right\}
\end{equation}  Then  from \eqref{kc-PS-bdd1} and \eqref{kc-PS-bdd2}, we deduce  
\begin{align}
\int_{\mathbb R^N} \left(\int_{\mathbb R^N}\frac{F(y,h(w_k))}{|y|^\beta |x-y|^\mu}dy\right)\frac{F(x,h(w_k))}{|x|^\beta}~dx &\leq C,\label{wk-sol10}\\
\int_{\mathbb R^N} \left(\int_{\mathbb R^N}\frac{F(y,h(w_k))}{|y|^\beta|x-y|^\mu}dy\right)\frac{f(x,h(w_k))}{|x|^\beta}h'(w_k)w_k~dx & \leq C,\label{wk-sol100}
\end{align} where $C$ is some positive constant. 
Now in the next  lemmas,  we consider the Cerami sequence $\{w_k\}$ to be satisfying all the   facts stated above.
{\begin{lemma}\label{PS-ws} Assume that \eqref{be} holds. Let the function $h$ be defined as in \eqref{g}	and  the assumptions $(f_1)$-$(f_5)$ hold. Suppose $\{w_k\}\subset W^{1,N}({\mathbb R^N})$ is a Cerami sequence for $J$.
Then for any $\Om\Subset\mb R^N,$ it follows that	\begin{align*}
\lim_{k \to+\infty} \int_{{\Om}} \left(\int_{{\mathbb R^N}}\frac{F(y,h(w_k))}{|y|^\beta|x-y|^{\mu}} dy \right) \frac{F(x,h(w_k))}{|x|^\beta}  ~dx = \int_{{\Om}} \left(\int_{{\mathbb R^N}}\frac{F(y,h(w))}{|y|^\beta|x-y|^{\mu}} dy \right) \frac{F(x,h(w))}{|x|^\beta}~dx.
\end{align*} 
\end{lemma}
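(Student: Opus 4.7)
The plan is to promote the a.e.\ pointwise convergence of $F(\cdot,h(w_k))$ to strong $L^{q_0}_{\mathrm{loc}}$-convergence, where $q_0:=\frac{2N}{2N-2\beta-\mu}$, and then to pass to the limit in the bilinear form via the Stein--Weiss inequality (Proposition \ref{HLS}). Since $w_k\to w$ pointwise a.e.\ by \eqref{cnvv} and $f$, $h$ are continuous, $F(\cdot,h(w_k))\to F(\cdot,h(w))$ a.e. The preparatory estimate is the uniform bound
\begin{equation*}
\sup_k\|F(\cdot,h(w_k))\|_{L^{q_0}(\mb R^N)}<\infty,
\end{equation*}
together with uniform absolute continuity on bounded sets. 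To obtain these, I would insert the bound $|h(w_k)|\le |w_k|$ of Lemma \ref{L1}-$(h_5)$ and Lemma \ref{techn} into \eqref{k1} raised to the $q_0$-th power, then apply H\"older to split into a polynomial part (controlled globally by the Sobolev embedding $W^{1,N}(\mb R^N)\hookrightarrow L^q(\mb R^N)$ for every $q\ge N$) and an exponential part of the form $\exp(\alpha'|w_k|^{N/(N-1)})-S_{N-2}(\alpha',|w_k|)$ with $\alpha'=\alpha\cdot 2^{1/(N-1)}$. Choosing $\alpha$ in \eqref{k1} sufficiently close to $\alpha_0$ so that $q_0\alpha' M^{N/(N-1)}<\alpha_N$, where $M:=\sup_k\|w_k\|$ (finite by Lemma \ref{lem712}), and combining Theorem \ref{ay} applied to $w_k/M$ with Lemma \ref{mp-inq} to absorb the outer power $q_0$, one gets the required uniform $L^{q_0}$ bound. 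Vitali's convergence theorem then yields $F(\cdot,h(w_k))\to F(\cdot,h(w))$ in $L^{q_0}(B_R)$ for every $R>0$.

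With this in hand, I decompose $\mathcal I_k-\mathcal I=I_1+I_2$ where
\begin{align*}
I_1&:=\int_\Om\int_{\mb R^N}\frac{F(y,h(w_k))}{|y|^\beta|x-y|^\mu}\cdot\frac{F(x,h(w_k))-F(x,h(w))}{|x|^\beta}\,dy\,dx,\\
I_2&:=\int_\Om\int_{\mb R^N}\frac{F(y,h(w_k))-F(y,h(w))}{|y|^\beta|x-y|^\mu}\cdot\frac{F(x,h(w))}{|x|^\beta}\,dy\,dx.
\end{align*}
For $I_1$, Proposition \ref{HLS} with $t=s=q_0$ and $\vartheta=\beta$ (whose admissibility conditions reduce to $\mu>0$, which is \eqref{be}) gives
\begin{equation*}
|I_1|\le C\|F(\cdot,h(w_k))\|_{L^{q_0}(\mb R^N)}\cdot\|\chi_\Om\bigl[F(\cdot,h(w_k))-F(\cdot,h(w))\bigr]\|_{L^{q_0}(\mb R^N)}\to 0,
\end{equation*}
using the $L^{q_0}(\Om)$-convergence from the previous paragraph. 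For $I_2$ the difference ranges over all of $\mb R^N$, so I fix $R>2\,\mathrm{diam}(\Om)$ and split the $y$-integral into $B_R\cup B_R^c$; the $B_R$-part is $o(1)$ as $k\to\infty$ by the same Stein--Weiss bound together with $L^{q_0}(B_R)$-convergence. For the $B_R^c$-piece, $|x-y|\ge|y|/2$ when $x\in\Om$, so H\"older's inequality gives
\begin{equation*}
\left|\int_{B_R^c}\frac{F(y,h(w_k))-F(y,h(w))}{|y|^\beta|x-y|^\mu}\,dy\right|\le C\bigl(\|F(\cdot,h(w_k))\|_{L^{q_0}(\mb R^N)}+\|F(\cdot,h(w))\|_{L^{q_0}(\mb R^N)}\bigr)\big\||y|^{-(\beta+\mu)}\big\|_{L^{q_0'}(B_R^c)};
\end{equation*}
the last factor tends to $0$ as $R\to\infty$ because $q_0'(\beta+\mu)=\frac{2N(\beta+\mu)}{2\beta+\mu}>N$ precisely when $\mu>0$. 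Multiplying by the finite quantity $\int_\Om|F(\cdot,h(w))|/|x|^\beta\,dx$ (finite by Stein--Weiss and Step 1), this piece is uniformly $o(1)$ as $R\to\infty$. Sending $k\to\infty$ first, then $R\to\infty$, concludes the lemma.

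The main obstacle I anticipate is Step 1, the uniform global $L^{q_0}$ bound: the product $q_0\cdot 2^{1/(N-1)}\cdot\alpha\cdot M^{N/(N-1)}$ (with $\alpha$ taken just above $\alpha_0$) must sit strictly below $\alpha_N$ in order for Theorem \ref{ay} to apply after rescaling $w_k$ by $M$, and Lemma \ref{mp-inq} is needed to absorb the outer $q_0$-th power cleanly into the exponential. This delicate exponent bookkeeping---typical of the critical exponential framework---is the only nontrivial point; once the uniform $L^{q_0}$ bound is secured, the Vitali--Stein--Weiss machinery and the tail estimate $\mu>0\Rightarrow q_0'(\beta+\mu)>N$ yield the conclusion essentially mechanically.
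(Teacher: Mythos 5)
Your proposal has a genuine gap at its foundation, namely Step~1. You need the uniform global bound $\sup_k\|F(\cdot,h(w_k))\|_{L^{q_0}(\mb R^N)}<\infty$ with $q_0=\frac{2N}{2N-2\beta-\mu}$, and you correctly identify that this forces the condition $q_0\,\alpha\, 2^{1/(N-1)}M^{N/(N-1)}<\alpha_N$ with $M=\sup_k\|w_k\|$. But you never resolve this: taking $\alpha$ close to $\alpha_0$ does not help, because $M$ is merely \emph{some} finite bound on the Cerami sequence coming from Lemma \ref{lem712} and can be arbitrarily large (it depends on the level $c$). There is no smallness of $\|\nabla w_k\|_{L^N}$ available for a general Cerami sequence, so Theorem \ref{TM-ineq}/Theorem \ref{ay} cannot be applied after rescaling by $M$, and the uniform exponential integrability you need simply is not known at this stage. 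Indeed, in this paper that kind of estimate (see \eqref{wk-soln} in the proof of Theorem \ref{T2}) is only deployed \emph{after} the mountain-pass level has been shown to lie strictly below the critical threshold of Lemma \ref{PS-level} \emph{and} under the additional contradiction hypothesis $\uhat w\equiv 0$, which together yield $\|\nabla \uhat w_k\|_{L^N}^{N/(N-1)}<\frac{2N-2\beta-\mu}{2N\,2^{1/(N-1)}}\frac{\alpha_N}{\alpha_0}(1-l)$. Lemma \ref{PS-ws}, by contrast, is stated for an arbitrary Cerami sequence at an arbitrary level, so your route is not available.

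The paper avoids this obstruction entirely by never seeking integrability of the exponential nonlinearity in a Lebesgue space better than $L^1$. Instead it truncates on the level sets $\{h(w_k)\geq R\}$ and $\{h(w)\geq \Lambda\}$ and uses hypothesis $(f_4)$, i.e.\ $F(x,s)\leq M_0 f(x,s)s^{-m_0}$ for $s\geq s_0$, in combination with the bound \eqref{wk-sol100} (which follows directly from the Cerami condition tested against $v_k=h(w_k)/h'(w_k)$) to make the contribution of the large-value regions uniformly small in $k$. On the remaining bounded-value region the integrand is dominated via \eqref{k1} with a fixed truncation parameter, and the generalized dominated convergence theorem finishes the argument. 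If you want to salvage your plan, you must replace Step~1 by this $(f_4)$-based truncation; the Stein--Weiss bilinear estimates and the tail computation $q_0'(\beta+\mu)>N$ in your Step~2 are fine but cannot compensate for the missing uniform $L^{q_0}$ bound.
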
}
\begin{proof}
Since $\{w_k\}$ is a Cerami sequence, it is bounded in  $W^{1,N}(\mb R^N)$ by Lemma \ref{lem712} and it verifies \eqref{cnvv}.
On the other hand, from \eqref{k2}, we have
$$\left(\displaystyle\int_{\mathbb R^N}\frac{F(y,h(w))}{|y|^\beta|x-y|^\mu}dy\right)\frac{F(\cdot,h(w))}{|x|^\beta} \in L^{1}({\mathbb R^N}).$$
Therefore, for any $\hat \de>,0$  we can choose $R> \max\left\{1,\left(\frac{2C M_0}{{\hat\de}}\right)^{\frac{1}{m_0+1}}, s_0\right\}$, where $C$ is defined in \eqref{wk-sol10} and \eqref{wk-sol100}, such that
{\begin{align}\label{bb}
&\int_{\{h(w)\geq R\}} \left(\int_{{\mathbb R^N}} \frac{F(y,h(w))}{|y|^\beta|x-y|^{\mu}} dy \right)\frac{F(x,h(w))}{|x|^\beta}  ~dx  \leq \hat\de.
\end{align}}
Now using $(f_4)$, Lemma \ref{L1}-$(h_4)$ and \eqref{wk-sol100}, we obtain
\begin{align}\label{b}
&\int_{\{h(w_k)\geq R\}} \left(\int_{{\mathbb R^N}} \frac{F(y,h(w_k))}{|y|^\beta|x-y|^{\mu}} dy \right)\frac{ F(x,h(w_k))} {|x|^\beta} ~dx \notag\\
&\leq M_0 \int_{\{h(w_k)\geq R\}} \left(\int_{{\mathbb R^N}} \frac{F(y,h(w_k))}{|y|^\beta|x-y|^{\mu}}dy\right) \frac{f(x,h(w_k))h(w_k)}{|x|^\beta(h(w_k))^{m_0+1}}   ~dx\notag\\
&\leq \frac{2M_0}{ R^{m_0+1}}\int_{ \{h(w_k)\geq R\}} \left(\int_{{\mathbb R^N}} \frac{F(y,h(w_k))}{|y|^\beta|x-y|^{\mu}} dy\right) {\frac{f(x,h(w_k))} {|x|^\beta}h'(w_k)w_{k}} ~dx<\hat\de.
\end{align}
Gathering  \eqref{bb} and \eqref{b}, we get
\begin{align*}
&\left| \int_{{\Om}} \left(\int_{{\mathbb R^N}}\frac{F(y,h(w_k))}{|y|^\beta|x-y|^{\mu}} dy \right) \frac{F(x,h(w_k))} {|x|^\beta}~dx-  \int_{{\Om}} \left(\int_{{\mathbb R^N}}\frac{F(y,h(w) )}{|y|^\beta|x-y|^{\mu}} dy \right) \frac{F(x,h(w))}{|x|^\beta} ~dx\right|\\
&\leq 2\hat\de+ \bigg|\int_{\Om\cap \{h(w_k)\leq R\}} \left(\int_{{\mathbb R^N}}\frac{F(y,h(w_k))}{|y|^\beta|x-y|^{\mu}} dy \right) \frac{F(x,h(w_k))}{|x|^\beta} ~dx\\&\qquad\qquad\qquad- \int_{\Om\cap \{h(w)\leq R\}} \left(\int_{{\mathbb R^N}}\frac{F(y,h(w))}{|y|^\beta|x-y|^{\mu}} dy \right) \frac{F(x,h(w))}{|x|^\beta} ~dx\bigg|.
\end{align*}
So, now it is enough to prove that 
\begin{align*}
\int_{ \Om\cap\{h(w_k)\leq R\}} \left(\int_{{\mathbb R^N}}\frac{F(y,h(w_k))}{|y|^\beta|x-y|^{\mu}} dy \right) \frac{F(x,h(w_k))}{|x|^\beta} dx\to \int_{ \Om\cap\{h(w)\leq R\}} \left(\int_{{\mathbb R^N}}\frac{F(y,h(w))}{|y|^\beta|x-y|^{\mu}} dy \right) \frac{F(x,h(w))}{|x|^\beta}dx
\end{align*} as $k \to+\infty$.
Since $\left(\displaystyle\int_{\mathbb R^N}\frac{F(y,h(w))}{|y|^\beta|x-y|^\mu}dy\right)\frac{F(\cdot,h(w))}{|x|^\beta} \in L^{1}({\mathbb R^N})$, by Fubini's theorem, we  infer that
\begin{align*}
&\lim_{\Lambda \to+\infty} \int_{\Om \cap\{h(w)\leq R\}}\left(\int_{\{h(w)\geq \Lambda\}}\frac{F(y,h(w))}{|y|^\beta|x-y|^{\mu}}dy\right)\frac{F(x,h(w))}{|x|^\beta}~dx\\
&= \lim_{\Lambda \to+\infty} \int_{\{h(w)\geq \Lambda\}}\left(\int_{\Om \cap\{h(w)\leq R\}}\frac{F(y,h(w))}{|y|^\beta|x-y|^{\mu}}dy\right)\frac{F(x,h(w))}{|x|^\beta}~dx=0.
\end{align*}
Thus, we can fix  $\Lambda> \max\left\{\left(\frac{2C M_0}{\hat\de}\right)^{\frac{1}{m_0+1}}, s_0\right\}$, where $C$ is defined in \eqref{wk-sol10} and \eqref{wk-sol100},  such that
\[\int_{{\Om} \cap\{h(w)\leq R\}} \left(\int_{ \{h(w)\geq \Lambda\} }\frac{F(y,h(w) )}{|y|^\beta|x-y|^{\mu}} dy \right) \frac{F(x,h(w))}{|x|^\beta} ~dx \leq \hat \delta.\]
Moreover, from \eqref{wk-sol100}, $(f_4)$ and Lemma \ref{L1}-$(h_4)$, we deduce
\begin{align*}
&\int_{\Om \cap\{h(w_k)\leq R\}} \left(\int_{\{h(w_k)\geq \Lambda\}}\frac{F(y,h(w_k))}{|y|^\beta|x-y|^{\mu}} dy \right)\frac{F(x,h(w_k))}{|x|^\beta} ~dx\\
&\leq \frac{M_0}{{\Lambda}^{m_{0}+1}} \int_{\Om \cap\{h(w_k)\leq R\}} \left(\int_{\{h(w_k)\geq \Lambda\} }\frac{  f(y,h(w_k))h (w_k)(y)}{|y|^\beta|x-y|^{\mu}} dy \right) \frac{F(x,h(w_k))} {|x|^\beta}~dx\\
&\leq \frac{2M_0}{\Lambda^{m_{0}+1}} \int_{\mb R^N}\left(\int_{{\mathbb R^N} }\frac{F(y,h(w_k))}{|y|^\beta|x-y|^{\mu}} dy \right) \frac{f(x,h(w_k))} {|x|^\beta}h'(w_k)w_k ~dx\leq \hat\de.
\end{align*}
Therefore, using the last two relations, it follows that
\begin{align*}
&\left|\int_{\Om \cap\{h(w)\leq R\}} \left(\int_{\{h(w)\geq \Lambda\} }\frac{F(y,h(w) )}{|y|^\beta|x-y|^{\mu}} dy \right) \frac{F(x,h(w))}{|x|^\beta} ~dx\right.\\
&\quad \quad \left.- \int_{\Om \cap\{h(w_k)\leq R\}} \left(\int_{ \{h(w_k)\geq \Lambda\} }\frac{F(y,h(w_k))}{|y|^\beta|x-y|^{\mu}} dy \right) \frac{F(x,h(w_k))}{|x|^\beta} ~dx\right|\leq 2\hat\de.
\end{align*}
Now we claim that for fixed positive real numbers $R$ and $\Lambda$, the following holds:
\begin{equation}\label{choq-new}
\begin{split}
	&\ds\lim_{k\to+\infty}\left|\int_{\Om\cap\{h(w_k)\leq R\}} \left(\int_{\{h(w_k)\leq \Lambda\}}\frac{F(y,h(w_k))}{|y|^\beta|x-y|^{\mu}} dy \right) \frac{F(x,h(w_k))}{|x|^\beta} ~dx\right.\\
	&\qquad- \left.\int_{\Om\cap\{h(w)\leq R\}} \left(\int_{\{h(w)\leq \Lambda\}}\frac{F(y,h(w) )}{|y|^\beta|x-y|^{\mu}} dy \right) \frac{F(x,h(w))}{|x|^\beta} ~dx \right|= 0.
\end{split}
\end{equation}
It can easy be verified that  as $k \to+\infty$,
\begin{align}\label{a0}
&\left(\int_{\{h(w_k)\leq \Lambda\} }\frac{F(y,h(w_k))}{|y|^\beta|x-y|^{\mu}} dy \right) \frac{F(x,h(w_k))}{|x|^\beta}\chi_{ \Om\cap \{h(w_k)\leq R\}}(x)\notag\\  &\qquad\ra \left(\int_{\{h(w)\leq \Lambda\} }\frac{F(y,h(w))}{|y|^\beta|x-y|^{\mu}} dy \right) \frac{F(x,h(w))}{|x|^\beta}\chi_{ \Om\cap \{h(w)\leq R\}}(x)
\end{align}
point-wise a.e. 
Now taking $r=N$ in \eqref{k1}, using Lemma \ref{L1}-$(h_5)$ and \eqref{HLSineq}, we get a constant $C_{K,\Lambda}>0$ depending on $K$ and $\Lambda$ such that
\begin{align*}
	&\int_{\Om \cap \{h(w_k)\leq R\}}\left( \int_{\{h(w_k)\leq \Lambda\} }\frac{F(y,h(w_k))}{|y|^\beta|x-y|^{\mu}} dy \right) \frac{ F(x,h(w_k))}{|x|^\beta}dx  \notag\\
	&\leq  C_{R,\Lambda}\int_{\Om \cap \{h(w_k)\leq R\}}\left( \int_{\{h(w_k)\leq \Lambda\} }\frac{|h(w_k)(y)|^{N}}{|y|^\beta|x-y|^{\mu}} dy \right)  \frac{|h(w_k)(x)|^{N}}{|x|^\beta} dx \notag\\
	&\leq  C_{R,\Lambda}\int_{\Om\cap \{h(w_k)\leq R\}}\left( \int_{\{h(w_k)\leq \Lambda\} }\frac{|w_k(y)|^{N}}{|y|^\beta|x-y|^{\mu}} dy \right)  \frac{|w_k(x)|^{N}} {|x|^\beta}dx \notag\\
	& \leq C_{R,\Lambda} \int_{\Om}\left(\int_{{\mb R^N} }\frac{|w_k(y)|^{N}}{|y|^\beta|x-y|^{\mu}}~dy  \right) \frac{|w_k(x)|^{N}}{|x|^\beta} ~dx\notag\\
	& \leq { {C_{R,\Lambda}C(N,\mu,\beta)\|w_k\|_{L^{\frac{2N^2}{{2N-2\beta-\mu}}}(\Om)}^{2N} \to C_{R,\Lambda}C(N,\mu,\beta)\|w\|_{L^{\frac{2N^2}{{2N-2\beta-\mu}}}({\Om})}^{2N}}} \; \text{as}\; k \to+\infty.
\end{align*}  
Hence, by Theorem $4.9$ in \cite{bz} there exists some function $ \mc F \in L^1({\mathbb R^N})$ such that  up to a subsequence, still denoted by $\{w_k\}$, for each $k\in \mathbb N$, we have 
\[\left|\left( \int_{\{h(w_k)\leq \Lambda\} }\frac{F(y,h(w_k))}{|y|^\beta|x-y|^{\mu}} dy \right)  \frac{F(x,h(w_k))}{|x|^\beta}\chi_{ {\Om} \cap \{h(w_k)\leq R\}} \right| \leq | \mc F(x)|.\]
So, using \eqref{a0} and applying the Lebesgue dominated convergence theorem, we achieve \eqref{choq-new}.	This concludes the proof.
\end{proof}
\noi Next, we prove the following result regarding the regularity of  the convolution term with the singular weight, which we will use in the later context.
\begin{lemma}\label{reg}Let  \eqref{be} be satisfied and let the function $h$ be defined in \eqref{g}. Assume that $(f_1)$-$(f_3)$ hold. Then for any $w\in W^{1,N}(\mb R^N)$, we have
\begin{equation}\label{wk-sol7}
	\int_{\mb R^N} \frac{F(y,h(w))}{|y|^\beta|x-y|^{\mu}}dy\in L^\infty(\mb R^N).
\end{equation}
\end{lemma}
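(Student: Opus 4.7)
The plan is to establish a uniform pointwise bound on
$$G(x) := \int_{\mathbb{R}^N}\frac{F(y,h(w(y)))}{|y|^{\beta}|x-y|^{\mu}}\,dy,$$
which is stronger than $G \in L^\infty(\mathbb{R}^N)$. First I would combine the growth estimate \eqref{k1} (taken with $r=N$ and some $\alpha>\alpha_0$) with Lemma \ref{L1}-$(h_5)$ and Lemma \ref{techn} to obtain a pointwise bound
$$F(y,h(w))\leq \varepsilon|w|^{N}+C_{\varepsilon}|w|^{N}\bigl[\exp(\tilde{\alpha}|w|^{\frac{N}{N-1}})-S_{N-2}(\tilde{\alpha},|w|)\bigr],$$
with $\tilde{\alpha}=\alpha\cdot 2^{1/(N-1)}$. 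Note that $F(y,h(w(\cdot)))\in L^{1}(\mathbb{R}^{N})$: the power term is integrable since $w\in L^{N}(\mathbb{R}^{N})$, while the exponential term is treated by H\"older's inequality, placing $|w|^N$ into any $L^{s'}(\mathbb{R}^N)$ via Sobolev embedding and the Trudinger--Moser--type factor into $L^{s}(\mathbb{R}^N)$ for some $s$ close enough to $1$ via Theorem \ref{TM-ineq} and Lemma \ref{mp-inq} (after fixing $\alpha$ so that $s\tilde{\alpha}\|w\|^{N/(N-1)}<\alpha_{N}$).

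Next I would split $\mathbb{R}^{N}=\{|x-y|>1\}\cup\{|x-y|\le1\}$. On the outer region, $|x-y|^{-\mu}\le 1$, hence
$$\int_{|x-y|>1}\frac{F}{|y|^{\beta}|x-y|^{\mu}}\,dy\le\int_{\mathbb{R}^N}\frac{F(y,h(w))}{|y|^{\beta}}\,dy,$$
which I would control by splitting further into $\{|y|\le 1\}$ and $\{|y|>1\}$: the latter is bounded by $\|F(\cdot,h(w))\|_{L^{1}(\mathbb{R}^{N})}$, and the former is handled by H\"older with an exponent $q'$ satisfying $\beta q'<N$ (possible since $\beta<N/2<N$ by \eqref{be}), using \eqref{KC-new1} to bound $\|F(\cdot,h(w))\|_{L^{q}(\mathbb{R}^{N})}$ with $q\ge N$.

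On the inner region $\{|x-y|\le1\}$, both singularities at $y=0$ and $y=x$ may coexist when $|x|$ is small, so I would apply the generalized three-factor H\"older inequality:
$$\int_{|x-y|\le 1}\frac{F}{|y|^{\beta}|x-y|^{\mu}}\,dy\le\|F(\cdot,h(w))\|_{L^{p}(\mathbb{R}^{N})}\Bigl(\int_{B_{1}(0)}|y|^{-\beta q}dy\Bigr)^{\!\!1/q}\Bigl(\int_{B_{1}(x)}|x-y|^{-\mu r}dy\Bigr)^{\!\!1/r},$$
with exponents $p,q,r\ge1$ obeying $\frac{1}{p}+\frac{1}{q}+\frac{1}{r}=1$, $p\ge N$, $\beta q<N$ and $\mu r<N$. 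The two weighted integrals are finite constants depending only on $N,\beta,\mu$ (the second by translation invariance, hence independent of $x$), and the $L^{p}$-norm of $F(\cdot,h(w))$ is finite by \eqref{KC-new1}.

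The main obstacle is the simultaneous feasibility of this exponent triple. Since \eqref{be} yields $\beta+\mu\le 2\beta+\mu<N$, we have $\beta/N+\mu/N<1$. Choosing $1/q=\beta/N+\delta$ and $1/r=\mu/N+\delta'$ with $\delta,\delta'>0$ small, we get $1/q+1/r<1$, so $1/p=1-1/q-1/r>0$ is well defined; moreover $\delta,\delta'$ may be selected to also satisfy $\delta+\delta'\ge (N-1-\beta-\mu)/N$, which guarantees $p\ge N$. The non-empty interval
$$\tfrac{N-1-\beta-\mu}{N}\le \delta+\delta'<\tfrac{N-\beta-\mu}{N}$$
(the lower bound being automatic when $\beta+\mu\ge N-1$) provides the required exponents. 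Combining the estimates over the two regions then yields a bound on $G(x)$ depending only on $N,\mu,\beta,w$, not on $x$, proving the lemma.
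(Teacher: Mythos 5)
Your proof is correct, but the decomposition is genuinely different from the paper's. The paper (following \cite{gao-yang-du}) splits the $y$-integral over $B_r(0)$ and its complement, then distinguishes cases on $x$ (inside or outside $B_{2r}(0)$) and on whether $y\in B_r(x)$; on each piece it compares $|x-y|$ with $|y|$ so that the two weights merge into a single one, $|y|^{-(\mu+\beta)}$ or $|x-y|^{-(\mu+\beta)}$, and a two-factor H\"older inequality together with $\mu+\beta<N$ finishes. You instead split by $|x-y|\gtrless 1$, dispose of the far region via the global integrability $F(\cdot,h(w))\in L^{1}(\mathbb{R}^{N})$ (a slightly stronger statement than the paper's \eqref{KC-new1}, but one that follows from exactly the same estimate \eqref{k1} with $(h_5)$, Lemma \ref{techn}, Lemma \ref{mp-inq} and the first part of Theorem \ref{TM-ineq}), and handle the region where both singularities can coexist by a three-factor H\"older inequality, whose exponent feasibility you correctly reduce to $\beta+\mu<N$, which indeed follows from \eqref{be}. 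Your route buys a shorter case analysis at the price of the exponent bookkeeping; the paper's route avoids the three-factor H\"older entirely. Two cosmetic points: in the near-region estimate the weighted integral of $|y|^{-\beta q}$ should be taken over $B_{1}(x)$ (the actual domain of integration), which is still bounded uniformly in $x$ by $\int_{B_{1}(0)}|y|^{-\beta q}\,dy+|B_{1}|$ since the integrand is at most $1$ off $B_1(0)$; and the smallness condition $s\tilde{\alpha}\|w\|^{N/(N-1)}<\alpha_N$ is not needed for the $L^1$ claim, since the first assertion of Theorem \ref{TM-ineq} already gives finiteness for every $\alpha>0$. Neither affects the validity of the argument.
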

\begin{proof}
To prove this result,  we follow the idea as in \cite[Theorem 2.7]{gao-yang-du}.
For any $r>0$, we can write
\begin{equation}\label{UJM}
	\int_{\mb R^N} \frac{F(y,h(w))}{|y|^\beta|x-y|^{\mu}}dy\leq\int_{B_{r}(0)}\frac{F(y,h(w))}{|y|^\beta|x-y|^{\mu}}dy+\int_{\mathbb{R}^{N}\setminus B_{r}(0)}\frac{F(y,h(w))}{|y|^\beta|x-y|^{\mu}}dy.
\end{equation}
\noi Now for $x\in\mathbb{R}^{N}\setminus B_{2r}(0)$, we have $|x-y|>|y|$, which together with H\"older's inequality and the fact  $F(\cdot,h(w))\in L^{q}(B_r(0))$ for any $q>1$, implies that
\begin{equation}\nonumber
	\begin{aligned}
		\int_{B_{r}(0)}\frac{F(y,h(w))}{|y|^\beta|x-y|^{\mu}}dy
		<\int_{B_{r}(0)}\frac{F(y,h(w))}{|y|^{\mu+\beta}}dy\leq\|F(\cdot,h(w))\|_{L^{p^\prime}(B_{r}(0))}
		\int_{B_r(0)}\frac{1}{|y|^{(\mu+\beta)p}}dy<+\infty,
	\end{aligned}
\end{equation}
provided  $1<p<\frac{N}{\mu+\beta}$.\\
For $x\in B_{2r}(0)$, using the arguments as above, we deduce
\begin{equation}\nonumber
	\begin{aligned}
		\int_{B_{r}(0)}\frac{F(y,h(w))}{|y|^\beta|x-y|^{\mu}}dy
		&\leq\int_{B_{r}(0)}\frac{|F(y,h(w))|}{|y|^{\mu+\beta}}dy
		+\int_{B_{3r}(x)}\frac{F(y,h(w))}{|x-y|^{\mu+\beta}}dy<+\infty.
	\end{aligned}
\end{equation}
Therefore, for each $x\in \mb R^N$, we get
\begin{equation}\label{EDC}
	\int_{B_{r}(0)}\frac{F(y,h(w))}{|y|^\beta|x-y|^{\mu}}dy<+\infty.
\end{equation}
On the other hand, we have
$$
\int_{\mathbb{R}^{N}\setminus B_{r}(0)}\frac{F(y,h(w))}{|y|^\beta|x-y|^{\mu}}dy
=\int_{(\mathbb{R}^{N}\setminus B_{r}(0))\cap B_{r}(x)}\frac{F(y,h(w))}{|y|^\beta|x-y|^{\mu}}dy
+\int_{(\mathbb{R}^{N}\setminus B_{r}(0))\cap (\mathbb{R}^{N}-B_{r}(x))}\frac{F(y,h(w))}{|y|^\beta|x-y|^{\mu}}dy
$$
Again, following the above estimates, we obtain
\begin{equation}\nonumber
	\begin{aligned}
		\int_{(\mathbb{R}^{N}\setminus B_{r}(0))\cap B_{r}(x)}\frac{F(y,h(w))}{|y|^\beta|x-y|^{\mu}}dy
		&\leq\frac{1}{r^{\beta}}\int_{(\mathbb{R}^{N}\setminus B_{r}(0))\cap B_{r}(x)}\frac{F(y,h(w))}{|x-y|^{\mu}}dy\\
		&\leq\frac{1}{r^{\beta}}\int_{B_{r}(x)}\frac{F(y,h(w))}{|x-y|^{\mu}}dy<+\infty.\\
	\end{aligned}
\end{equation}
Similarly, using $F(\cdot,h(w))\in L^q (\mb R^N),$ for any $q\geq N,$ we have the below estimate
\begin{equation}\nonumber
	\begin{aligned}
		\int_{(\mathbb{R}^{N}\setminus B_{r}(0))\cap (\mathbb{R}^{N}\setminus B_{r}(x))}\frac{F(y,h(w))}{|y|^\beta|x-y|^{\mu}}dy
		&\leq\frac{1}{r^{\mu}}\int_{\mathbb{R}^{N}\setminus B_{r}(0)}\frac{F(y,h(w))}{|y|^{\beta}}dy\\
		&\leq\frac{1}{r^{\mu}}\|F(\cdot, h(w))\|_{L^{p}(\mathbb{R}^{N}\setminus B_{r}(0))}
		\int_{\mathbb{R}^{N}\setminus B_{r}(0)} \frac{1}{|y|^{\beta p}}dy<+\infty,
	\end{aligned}
\end{equation}
provided $p>\frac{N}{\beta}$.
Thus, 
\begin{equation}\label{YHN}
	\int_{\mathbb{R}^{N}\setminus B_{r}(0)}\frac{F(y,h(w))}{|y|^\beta|x-y|^{\mu}}dy<+\infty.
\end{equation}
Combining \eqref{UJM}, \eqref{EDC}, \eqref{YHN}, we can obtain our result.
\end{proof}
{\begin{lemma}\label{wk-sol}Assume that \eqref{be} holds. Let the assumptions $(f_1)$-$(f_6)$ be satisfied and the function $h$ be defined  in \eqref{g}. 	 If $\{w_k\}\subset W^{1,N}({\mathbb R^N})$ is a Cerami sequence for $ J$, then $\nabla w_k(x) \rightarrow \na w(x)$ a.e. in ${\mathbb R^N}$. Moreover, 
	\begin{equation}\label{wk-sol2}
		|\nabla w_k|^{N-2}\nabla w_k \rightharpoonup |\nabla w|^{N-2}\nabla w\; \text{weakly in}\; (L^{\frac{N}{N-1}}({\mathbb R^N}))^N \text{\;\; as $k\to+\infty$.}
	\end{equation}
\end{lemma}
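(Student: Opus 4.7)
The plan is the standard two-stage strategy for quasilinear problems: first establish $\nabla w_k(x) \to \nabla w(x)$ a.e.\ on every ball $B_R(0)$ (and hence a.e.\ in $\mathbb R^N$ by a diagonal argument in $R$), then upgrade to the stated weak convergence in $(L^{N/(N-1)}(\mathbb R^N))^N$. The pointwise convergence will be obtained from the monotonicity inequality
\[
(|a|^{N-2}a - |b|^{N-2}b)\cdot(a-b) \ge c_N |a-b|^N, \qquad a,b \in \mathbb R^N,\ N\ge 2,
\]
once I show that for every fixed $R>0$,
\[
I_k := \int_{B_R(0)} \bigl(|\nabla w_k|^{N-2}\nabla w_k - |\nabla w|^{N-2}\nabla w\bigr)\cdot\nabla(w_k - w)\,dx \;\longrightarrow\; 0.
\]
This will force $\nabla w_k \to \nabla w$ in $L^N(B_R(0))^N$, hence a.e.\ along a subsequence, and a diagonal subsequence argument handles all of $\mathbb R^N$. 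For the weak convergence claim, $\{|\nabla w_k|^{N-2}\nabla w_k\}$ is bounded in $(L^{N/(N-1)}(\mathbb R^N))^N$ by Lemma \ref{lem712}, so combined with the a.e.\ convergence just produced, a standard argument (e.g.\ \cite[Theorem 4.9]{bz}) identifies the weak limit as $|\nabla w|^{N-2}\nabla w$.

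To reduce matters to $I_k \to 0$, fix $R>0$ and a cutoff $\eta \in C_c^\infty(\mathbb R^N)$ with $\eta\equiv 1$ on $B_R(0)$, $\mathrm{supp}\,\eta \subset B_{2R}(0)$, and set $\phi_k := \eta(w_k - w)$. Since $\{\phi_k\}$ is bounded in $W^{1,N}(\mathbb R^N)$ (by Lemma \ref{lem712}), the Cerami condition yields $\langle J'(w_k),\phi_k\rangle \to 0$, which, upon expanding using $\nabla\phi_k = \eta\nabla(w_k-w)+\nabla\eta\,(w_k-w)$, reads
\begin{align*}
& \int_{\mathbb R^N}\!\eta\,|\nabla w_k|^{N-2}\nabla w_k\cdot\nabla(w_k-w)\,dx
+\int_{\mathbb R^N}\!|\nabla w_k|^{N-2}\nabla w_k\cdot\nabla\eta\,(w_k-w)\,dx \\
&\quad + \int_{\mathbb R^N}V(x)|h(w_k)|^{N-2}h(w_k)h'(w_k)\,\eta(w_k-w)\,dx \\
&\quad - \int_{\mathbb R^N}\!\left(\int_{\mathbb R^N}\!\frac{F(y,h(w_k))}{|y|^\beta|x-y|^\mu}dy\right)\!\frac{f(x,h(w_k))}{|x|^\beta}h'(w_k)\,\eta(w_k-w)\,dx \;=\; o(1).
\end{align*}
The second and third integrals vanish thanks to the strong convergence $w_k \to w$ in $L^q(B_{2R}(0))$ for all $q<\infty$, the bound $|h(w_k)h'(w_k)| \le |w_k|$ from Lemma \ref{L1}, the boundedness of $V$ on $B_{2R}(0)$, and H\"older combined with the uniform bound of $\{|\nabla w_k|^{N-1}\}$ in $L^{N/(N-1)}$. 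Subtracting the quantity $\int \eta|\nabla w|^{N-2}\nabla w \cdot \nabla(w_k-w)\,dx$, which tends to $0$ by weak convergence of $\nabla w_k$ in $L^N$ against $\eta|\nabla w|^{N-2}\nabla w \in (L^{N/(N-1)})^N$, reduces the task to verifying that the convolution term is $o(1)$.

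The hard part will be precisely this nonlocal term, because the Stein--Weiss kernel carries singular weights and $f(\cdot,h(w_k))$ grows exponentially. The plan is to first invoke Lemma \ref{reg} to treat $\int_{\mathbb R^N}\frac{F(y,h(w_k))}{|y|^\beta|x-y|^\mu}dy$ as an $L^\infty(\mathbb R^N)$ factor, with bound uniform in $k$ obtained from \eqref{wk-sol10} together with the Stein--Weiss inequality \eqref{HLSineq}. Next, using $(f_2)$--$(f_3)$, Lemma \ref{L1}-$(h_3)$, Lemma \ref{mp-inq}, Lemma \ref{techn}, and the weighted Trudinger--Moser inequality (Theorem \ref{as} applied on $B_{2R}(0)$), one shows that $\frac{f(\cdot,h(w_k))}{|\cdot|^\beta}h'(w_k)$ is bounded in $L^{q'}(B_{2R}(0))$ uniformly in $k$ for some $q'>1$ close to $1$; pairing this via H\"older with $\eta(w_k-w) \to 0$ strongly in $L^{q}(B_{2R}(0))$ (the conjugate exponent) then gives the desired $o(1)$. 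With $I_k \to 0$ established, the monotonicity inequality finishes the a.e.\ convergence of gradients, and the weak convergence of $|\nabla w_k|^{N-2}\nabla w_k$ follows as described above.
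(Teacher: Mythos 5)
Your overall skeleton (test $J'(w_k)$ against $\eta(w_k-w)$, isolate $I_k=\int_{B_R}(|\nabla w_k|^{N-2}\nabla w_k-|\nabla w|^{N-2}\nabla w)\cdot\nabla(w_k-w)$, and conclude by the monotonicity of the $N$-Laplacian) is the standard route and matches in spirit what the paper delegates to \cite{M-do1} and \cite{ay}. But there is a genuine gap at the step you yourself flag as ``the hard part.'' You claim that $(f_2)$--$(f_3)$ together with Theorem \ref{as} on $B_{2R}(0)$ give a bound for $\frac{f(\cdot,h(w_k))}{|\cdot|^\beta}h'(w_k)$ in $L^{q'}(B_{2R}(0))$, uniformly in $k$, for some $q'>1$. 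This cannot be extracted from the Trudinger--Moser inequality on an arbitrary ball: since $|f(x,h(w_k))|\lesssim \exp(\al|h(w_k)|^{\frac{2N}{N-1}})\leq \exp(\al 2^{\frac1{N-1}}|w_k|^{\frac{N}{N-1}})$, the uniform bound requires $\al q' 2^{\frac1{N-1}}\,\|\nabla w_k\|_{L^N(B_{2R})}^{\frac{N}{N-1}}$ to sit strictly below the Moser threshold $(1-\beta/N)\al_N$ after normalization. Lemma \ref{lem712} only gives that $\|w_k\|$ is bounded by some constant $C$ which has no reason to be small, and the lemma is stated for a Cerami sequence at an arbitrary level, so no energy-level restriction is available to shrink $C$. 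If the sequence concentrates at a point of $B_{2R}(0)$, the local Dirichlet energy there is as large as the global one and the exponential integral can genuinely blow up. This is exactly why the paper introduces the finite energy-concentration set $X_\nu$, proves the local $L^q$ bound \eqref{aa} only on balls $B_{l_0/2}(z_0)$ whose local energy is below a small $\nu$ chosen so that $\al q 2^{\frac1{N-1}}\nu^{\frac1{N-1}}(1-\e)^{\frac1{N-1}}<(1-\beta/N)\al_N/\al_0\cdot\al_N$-type condition holds, obtains a.e.\ convergence of gradients on $\mb R^N\setminus X_\nu$, and then uses that $X_\nu$ is finite (hence null) to conclude. Your proof is missing this decomposition entirely, and without it the key limit $I_k\to 0$ on a fixed ball $B_R(0)$ is not justified.

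A secondary issue: you propose to treat $\int_{\mb R^N}\frac{F(y,h(w_k))}{|y|^\beta|x-y|^\mu}\,dy$ as an $L^\infty$ factor ``with bound uniform in $k$'' obtained from \eqref{wk-sol10} and \eqref{HLSineq}. Lemma \ref{reg} is a qualitative finiteness statement for a fixed function; its proof rests on $F(\cdot,h(w))\in L^q$, which again comes from the non-uniform version of Trudinger--Moser, and \eqref{wk-sol10} controls only the full double integral, not the supremum of the inner convolution. The paper sidesteps this: it uses the $L^\infty$ bound only for the fixed weak limit $w$ (term $L_1$), and handles the $k$-dependent convolution (term $L_2$) via the semigroup property of the Riesz potential and Cauchy--Schwarz, again under the local smallness furnished by the concentration set. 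You would need to replicate that structure rather than assert a uniform $L^\infty$ bound.
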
}
\begin{proof}
Since $\{w_k\}$  is a Cerami sequence for $J$, by Lemma \ref{lem712}, $\{w_k\}$  is bounded in $W^{1,N}({\mathbb R^N})$ and it satisfies \eqref{cnvv}.  
Thus, the sequence $\{|\nabla w_k|^{N-2}\nabla w_k\}$ is bounded in $(L^{\frac{N}{N-1}}({\mathbb R^N}))^N$. This implies that 
there exists $u \in (L^{\frac{N}{N-1}}({\mathbb R^N}))^N$ such that,
\begin{align*}|\nabla w_k|^{N-2}\nabla w_k \rightharpoonup u \; \text{weakly in}\;  (L^{\frac{N}{N-1}}({\mathbb R^N}))^N \; \text{as} \; k \to+\infty.\end{align*}
Also we have, $\{|\nabla w_k|^N+|w_k|^N\}$ is bounded in $L^1({\mathbb R^N})$,  which yields that there exists a non-negative radon measure $\sigma$ such that, up to a subsequence, we have
\begin{align*}|\nabla w_k|^N+| w_k|^N \to \sigma \; \text{in}\; (C({\Om}))^*\; \text{as}\; k \to+\infty,
\end{align*} for any $\Om\subset\subset \mb R^N.$
We show that $u = |\nabla w|^{N-2}\nabla w $.
For any fixed
$\nu>0$ we define the energy concentration set $$X_\nu := \left\{x \in  {\mathbb R^N}:\; \lim_{l\to 0}\lim_{k\to+\infty}\int_{B_l(x)} \left(|\nabla w_k|^N+|w_k|^N\right)dx\geq \nu\right\}.$$
Thus, $X_\nu$ is a finite set. 
Indeed, if not, then there exists a sequence of distinct points $\{z_k\}$ in $X_\nu$ such that, $\sigma(B_l(z_k))\geq \nu$ for all $l>0$ and for all $k\in\mathbb N$. This gives that $\sigma(\{z_k\}) \geq \nu$ for all $k$. Therefore, $\sigma(X_\nu)= +\infty$. But this is a contradiction to the fact that
\[\sigma(X_\nu) = \lim_{k \to+\infty} \int_{X_\nu} (|\nabla w_k|^N+|w_k|^N )~dx \leq C.\] 
Thus, we can take $X_\nu= \{z_1,z_2,\ldots, z_n\}$.\\
Next we claim that, we can choose sufficiently small $\nu>0,$  with $\nu^{\frac{1}{N-1}} < \frac{1}{2^{\frac{1}{N-1}}} \frac{{2N-2\beta-\mu}}{2N}\left(1-\frac{\beta}{N}\right)\frac {\alpha_N}{\al_0}$ such that
\begin{align}\label{wk-sol6}
	\lim_{k \to+\infty} \int_\Om\left( \int_{\mathbb R^N} \frac{F(y, h (w_k))}{|y|^\beta|x-y|^{\mu}}dy\right) \frac{f(x,h(w_k))}{|x|^\beta}h'(w_k)w_k~dx=  \int_\Om \left( \int_{\mathbb R^N} \frac{F(y,h(w))}{|y|^\beta|x-y|^{\mu}}dy\right) \frac{f(x,h(w))}{|x|^\beta} h'(w)w~dx, \end{align} where  $\Om$ is any compact subset of $ {\mathbb R^N} \setminus X_\nu$.\\
Let us choose $z_0 \in \Om$ and $l_0>0$ be such that $\sigma(B_{l_0}(z_0)) < \nu$. Hence  $z_0 \notin X_\nu$. Furthermore, we consider  $\phi \in C_c^\infty({\mathbb R^N})$ with $0\leq \phi(x)\leq 1$ for $x \in {\mathbb R^N}$, $\phi \equiv 1$ in $B_{\frac{l_0}{2}}(z_0)$ and $\phi \equiv 0$ in $ {\mathbb R^N} \setminus B_{l_0}(z_0)$. Then
\[\lim_{k \to+\infty} \int_{B_{\frac{l_0}{2}}(z_0) }(|\nabla w_k|^N+|w_k|^N) dx\leq \lim_{k \to+\infty} \int_{B_{l_0}(z_0) }(|\nabla w_k|^N+|w_k|^N)\phi dx \leq \sigma(B_{l_0}(z_0) ) < \nu. \]
Therefore, for sufficiently large $k \in \mb N$ and  sufficiently small $\e>0$, we have
\begin{equation}\label{wk-sol3}
	\int_{B_{\frac{l_0}{2}}(z_0)}(|\nabla w_k|^N+|w_k|^N) dx\leq \nu(1-\e).
\end{equation}
From $(f_2)$, $(f_3)$,  for  $\al>\al_0$, very close to $\al_0$, there exists some constant $C>0$ such that $|f(x,s)|\leq C\exp\left( \al |s|^{\frac{2N}{N-1}}\right)$ in $B_{\frac{l_0}{2}}(z_0)$. Using this, together with 
\eqref{wk-sol3} and  Lemma \ref{L1}-$(h_6
)$, we deduce
{\small	\begin{equation*}
		\begin{split}
			 \int_{B_{\frac{l_0}{2}}(z_0)}\frac{|f(x,h(w_k))|^q}{|x|^\beta}~dx 
			&  \leq C\int_{B_{\frac{l_0}{2}}(z_0)}\frac{\exp\left(\al q 2^{\frac{1}{N-1}}|w_k|^{\frac{N}{N-1}}\right)}{|x|^\beta}~dx\\
			& \leq C \int_{B_{\frac{l_0}{2}}(z_0)}\frac{1}{|x|^\beta}\exp\Big( \al q 2^{\frac{1}{N-1}}\nu^{\frac{1}{N-1}}(1-\e)^{\frac{1}{N-1}}\Bigg(\frac{|w_k|^N}{\int_{B_{\frac{l_0}{2}}(z_0) }(|\nabla w_k|^N+| w_k|^N) dx}\Bigg)^{\frac{1}{N-1}}\Big)~dx
		\end{split}
\end{equation*}}
\noi Hence, we can choose $q>1$  such that $ \al q 2^{\frac{1}{N-1}}\nu^{\frac{1}{N-1}}(1-\e)^{\frac{1}{N-1} }< \left(1-\frac{\beta}{N}\right)\frac{\alpha_N}{\al_0}$ and using Theorem \ref{as} in the last relation, we get
\begin{align}\label{aa}
	& \int_{B_{\frac{l_0}{2}}(z_0)}\frac{|f(x,h(w_k))|^q}{|x|^\beta}~dx\leq C.
\end{align} Next, we consider
\begin{equation*}
	\begin{split}
		&\int_{B_{\frac{l_0}{2}}(z_0) } \left|\left( \int_{\mathbb R^N} \frac{F(y,h(w_k))}{|y|^\beta |x-y|^{\mu}}dy\right) \frac{f(x,h(w_k))}{|x|^\beta}h'(w_k) w_k- \left( \int_{\mathbb R^N} \frac{F(y,h(w))}{|y|^\beta|x-y|^{\mu}}dy\right) \frac{f(x,h(w))}{|x|^\beta}h'(w) w \right|~dx\\
		& \leq \int_{B_{\frac{l_0}{2}}(z_0)} \left( \int_{\mathbb R^N} \frac{F(y,h(w))}{|y|^\beta|x-y|^{\mu}}dy\right) \left|\frac{f(x,h(w_k))}{|x|^\beta}h'(w_k) w_k-\frac{f(x,h(w))}{|x|^\beta}h'(w))w\right|~dx\\
		& \quad + \int_{B_{\frac{l_0}{2}}(z_0) } \left|\left( \int_{\mathbb R^N} \frac{F(y,h(w_k))-F(y,h(w))}{|y|^\beta|x-y|^{\mu}}dy\right) \frac{f(x,h(w_k))}{|x|^\beta}h'(w_k) w_k\right|~dx\\
		&:= L_1 + L_2.
	\end{split}
\end{equation*}
\noi Now	using \eqref{wk-sol7}, we get the estimate
\begin{align}\label{wk-sol8}
	L_1 \leq  C\int_{B_{\frac{l_0}{2}}(z_0)}  \left |\frac{f(x,h(w_k))}{|x|^\beta}h'(w_k)w_k-\frac{f(x,h(w))}{|x|^\beta}h'(w)w \right |~dx,
\end{align}
where $C>0$ is some constant. 
Moreover, 	the asymptotic growth assumptions on  $f$ gives us
\begin{equation*}
	\lim_{s \to+\infty} \frac{f(x,s
		)s}{(f(x,s))^r} = 0\; \text{uniformly in }x \in {\mathbb R^N},\; \text{for all}\; r>1.
\end{equation*} This  together with \eqref{aa} and Lemma \ref{L1}-$(h_4)$  implies that $$\int_{B_{\frac{l_0}{2}}(z_0)}\ds\frac {f(x,h(w_k))}{|x|^\beta}h'(w_k)w_kdx\leq\int_{B_{\frac{l_0}{2}}(z_0)}\ds\frac {f(x,h(w_k))}{|x|^\beta}h(w_k)dx\leq\int_{B_{\frac{l_0}{2}}(z_0)}\ds\frac {(f(x,h(w_k)))^q}{|x|^\beta}dx\leq C.$$  That is $\left\{\ds\frac {f(x,h(w_k))}{|x|^\beta}h'(w_k)w_k\right\}$ is an equi-integrable family of functions over ${B_{\frac{l_0}{2}}(z_0)} $. Also, by the continuity of $f $ and $h$ we get,   $\frac {f(x,h(w_k))}{|x|^\beta}h'(w_k)w_k(x)$ $\to$ $\frac {f(x,h(w))}{|x|^\beta}h'(w)w(x)$  a.e. in $\mb R^N$, as $k \to+\infty.$ Hence,  we obtain $L_1 \to 0$ as $k\to+\infty$, thanks to Vitali's convergence theorem. Next, we show that $L_2 \to 0$ as $k \to+\infty$.\\
For that, first we get the following estimate  with the help of the semigroup property of the Riesz Potential and the Cauchy-Schwartz inequality:
\begin{align*}
	\begin{split}
		&\int_{{\mathbb R^N}} \left(\int_{{\mathbb R^N}}\frac{F(y,h(w_k))-F(y,h(w))}{|y|^\beta|x-y|^{\mu}} dy \right) \chi_{B_{\frac{l_0}{2}}(z_0) }(x) \frac{f(x,h(w_k))}{|x|^\beta}h'(w_k) w_k ~dx \\
		&\leq  C\left(\int_{B_{\frac{l_0}{2}} (z_0)} \left( \int_{{\mathbb R^N}}\frac{|F(y,h(w_k))- F(y,h(w))| dy }{|y|^\beta|x-y|^{\mu}}\right) \frac{|F(x,h(u_k))- F(x,h(u))|}{|x|^\beta} ~dx \right)^{\frac12}\\
		&\quad \times \left(\int_{{\mathbb R^N}} \left(\int_{{\mathbb R^N}} \chi_{B_{\frac{l_0}{2}}(z_0)}(y) \frac{f(y,h(w_k))h'(w_k) w_k}{|y|^\beta|x-y|^{\mu}} dy \right) \chi_{B_{\frac{l_0}{2}} (z_0)}(x) \frac{f(x,h(w_k))}{|x|^\beta}h'(w_k) w_k ~dx \right)^{\frac12},
	\end{split}
\end{align*}where $C$ is some constant, independent of $k$.
By Theorem \ref{HLS} with $\beta=\vartheta$ and	gathering \eqref{aa}, \eqref{wk-sol8}, and choosing $\nu$  sufficiently small such that  $\nu^{\frac{1}{N-1}} < \frac{1}{2^{\frac{1}{N-1}}}\frac{{2N-2\beta-\mu}}{2N} \left(1-\frac{\beta}{N}\right) \frac{\alpha_N}{\al_0}$, we find that
\begin{align}\label{bdf}
	&\left(\int_{{\mathbb R^N}} \left(\int_{{\mathbb R^N}} \chi_{B_{\frac{l_0}{2}} (z_0)}(y) \frac{f(y,h(w_k))}{|y|^\beta|x-y|^\mu}h'(w_k) w_k dy \right) \chi_{B_{\frac{l_0}{2}}(z_0) }(x) \frac{f(x,h(w_k))}{|x|^\beta}h'(w_k) w_k ~dx \right)^{\frac12}\notag\\& \leq \|\chi_{B_{\frac{l_0}{2}} (z_0)}f(\cdot,h(w_k))h'(w_k)w_k \|_{L^{\frac{2N}{{2N-2\beta-\mu}}}({\mathbb R^N})} \leq C.
\end{align}
Now  we claim that \begin{align}\label{3.233}
	\lim_{k \to+\infty} \int_{B_{\frac{l_0}{2}} (z_0)} \left(\int_{{\mathbb R^N}}\frac{|F(y,h(w_k))-F(y,h(w))|}{|y|^\beta|x-y|^{\mu}} dy \right) \frac{|F(x,h(w_k))-F(x,h(w))|} {|x|^\beta} ~dx =0.
\end{align}  Using the similar arguments used in Lemma \ref{PS-ws}, we get 
as $\Lambda \to+\infty$
\begin{align}\label{3.25}
	&\int_{B_{\frac{l_0}{2}} (z_0)} \int_{|h(w)| \geq \Lambda\}} \frac{F(y,h(w))}{|y|^\beta|x-y|^{\mu}} \frac{F(x,h(w))}{|x|^\beta} dy ~dx=o(\Lambda),\\ &\int_{B_{\frac{l_0}{2}} (z_0)} \int_{\{|h(w_k)| \geq \Lambda\}} \frac{F(y,h(w_k))}{|y|^\beta|x-y|^{\mu}} \frac{F(x,h(w_k))}{|x|^\beta} dy ~dx = o(\Lambda)\label{3.20},
\end{align}
\begin{equation}\label{3.26}
	\int_{B_{\frac{l_0}{2}} (z_0)} \int_{\{|h(w)| \geq \Lambda\}} \frac{F(y,h(w_k))}{|y|^\beta|x-y|^{\mu}} \frac{F(x,h(w))} {|x|^\beta}dy ~dx = o(\Lambda),
\end{equation}
and
\begin{equation}\label{3.27}
	\int_{B_{\frac{l_0}{2}} (z_0)} \int_{\{|h(w_k)| \geq \Lambda\}} \frac{F(y,h(w_k))}{|y|^\beta|x-y|^{\mu}}\frac{ F(x,h(w))}{|x|^\beta} dy ~dx = o(\Lambda).
\end{equation}
So,
\begin{equation*}
	\begin{split}
		& \int_{B_{\frac{l_0}{2}} (z_0)} \left(\int_{{\mathbb R^N}}\frac{|F(y,h(w_k))-F(y,h(w))|}{|y|^\beta|x-y|^{\mu}} dy \right) \frac{|F(x,h(w_k))-F(x,h(w))|}{|x|^\beta}  ~dx\\&\leq
		2\int_{B_{\frac{l_0}{2}} (z_0)} \left(\int_{{\mathbb R^N}}\frac{\chi_{\{h(w_k)\geq \Lambda\}}(y)F(y,h(w_k))}{|y|^\beta|x-y|^{\mu}} dy \right)\frac{ F(x,h(w_k))}{|x|^\beta} ~dx \\
		&+4 \int_{B_{\frac{l_0}{2}} (z_0)} \left(\int_{{\mathbb R^N}}\frac{F(y,h(w_k))}{|y|^\beta|x-y|^{\mu}} dy \right) \chi_{\{h(w)\geq \Lambda\}}(x)\frac{F(x,h(w))}{|x|^\beta}~dx\\&+4 \int_{B_{\frac{l_0}{2}} (z_0)} \left(\int_{{\mathbb R^N}}\frac{\chi_{\{h(w_k)\geq \Lambda\}}(y)F(y,h(w_k))}{|y|^\beta|x-y|^{\mu}} dy \right)\frac{ F(x,h(w))} {|x|^\beta}~dx\\
		&+2\int_{B_{\frac{l_0}{2}} (z_0)} \left(\int_{{\mathbb R^N}}\frac{\chi_{\{h(w)\geq \Lambda\}}(y)F(y,h(w))}{|y|^\beta|x-y|^{\mu}} dy \right) \frac{F(x,h(w))}{|x|^\beta} ~dx \\
		&+\int_{B_{\frac{l_0}{2}} (z_0)}\Bigg[\left(\int_{\mathbb R^N}\frac{|F(y,h(w_k))\chi_{\{h(w_k)\leq \Lambda\}}(y)-F(y,h(w))\chi_{\{h(w)\leq \Lambda\}}(y)|}{|y|^\beta|x-y|^\mu}dy\right)\\&\qquad\qquad\qquad\frac{|F(x,h(w_k))\chi_{\{h(w_k)\leq \La\}}(x)-F(x,h(w))\chi_{\{h(w)\leq \Lambda\}}(x)|}{|x|^\beta}\Bigg]~dx.
	\end{split}
\end{equation*}
Then the Lebesgue dominated convergence theorem yields that the last integration tends to $0$ as $k \to+\infty.$ Recalling \eqref{3.25}-\eqref{3.27}, we get \eqref{3.233}, which together with \eqref{bdf}	implies that $L_2 \to 0$ as $k \to+\infty$.
This yields that
\begin{align*}&\lim_{k \to+\infty}\int_{B_{\frac{l_0}{2}}(z_0)} \Bigg|\left( \int_{\mathbb R^N} \frac{F(y,h(w_k))}{|y|^\beta|x-y|^{\mu}}dy\right) \frac{f(x,h(w_k))}{|x|^\beta}h'(w_k)w_k\\&\qquad\qquad\qquad\qquad\qquad- \left( \int_{\mathbb R^N} \frac{F(y,h(w))}{|y|^\beta|x-y|^{\mu}}dy\right) \frac{f(x,h(w))}{|x|^\beta}h'(w)w \Bigg|~dx=0.\end{align*}
Since $\Om$ is compact, by applying standard finite covering lemma, we obtain \eqref{wk-sol6}. Rest of the proof of \eqref{wk-sol2} can be concluded by following the classical arguments as in the proof of Lemma $4$ in \cite{M-do1} and Lemma 4.4 in \cite{ay}. 
\end{proof}
\begin{lemma}\label{VV}
Assume that \eqref{be} is satisfied.	Suppose that $(V_1)$, $(V_2^\prime$) and $(f_1)$-$(f_6)$   hold and  $h$ is defined as in \eqref{g}. 	 Let $\{w_k\}\subset W^{1,N}({\mathbb R^N})$ be a Cerami sequence for $ J$. Then
\begin{align*} 
\int_{\mb R^N}	V(x)|h(w_k)|^{N-2}h(w_k) h'(w_k)\phi dx \to \int_{\mb R^N} V(x)|h(w)|^{N-2}h(w) h'(w)\phi dx \text{\; for any \; } \phi \in C_c^\infty(\mb R^N) \text{\; as $k\to+\infty$.}
\end{align*} 
\end{lemma}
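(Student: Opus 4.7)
The plan is to reduce the convergence to a compact set via the compactly supported test function, establish pointwise convergence a.e.\ using continuity of $h$ and $h'$, and then invoke Vitali's convergence theorem after producing a uniform bound in terms of $|w_k|^{N-1}$.

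First, fix $\phi\in C_c^\infty(\mathbb R^N)$ and set $K:=\operatorname{supp}\phi$, which is compact, hence of finite Lebesgue measure. Since $V$ is continuous on $\mathbb R^N$, there exists $V_K>0$ such that $|V(x)|\leq V_K$ for all $x\in K$. Writing
\[
g_k(x):=V(x)|h(w_k(x))|^{N-2}h(w_k(x))h'(w_k(x))\phi(x),\qquad g(x):=V(x)|h(w(x))|^{N-2}h(w(x))h'(w(x))\phi(x),
\]
the lemma reduces to proving $\int_K g_k\,dx\to\int_K g\,dx$ as $k\to+\infty$.

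Next, by Lemma \ref{lem712} the Cerami sequence is bounded in $W^{1,N}(\mathbb R^N)$, so up to a subsequence \eqref{cnvv} holds; in particular $w_k(x)\to w(x)$ a.e.\ in $\mathbb R^N$. Since $h\in C^\infty$ by Lemma \ref{L1}-$(h_1)$ and the map $s\mapsto |h(s)|^{N-2}h(s)h'(s)$ is continuous, we obtain $g_k(x)\to g(x)$ for a.e.\ $x\in\mathbb R^N$. It therefore suffices to establish equi-integrability of $\{g_k\}$ on $K$ and apply Vitali's convergence theorem.

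For the domination step, the properties $(h_3)$ and $(h_5)$ of Lemma \ref{L1} give $|h(s)|\leq|s|$ and $0<h'(s)\leq 1$, so
\[
|g_k(x)|\leq V_K\,\|\phi\|_{L^\infty(\mathbb R^N)}\,|w_k(x)|^{N-1}\qquad\text{on }K.
\]
By \eqref{cnvv}, $w_k\to w$ strongly in $L^N(K)$, hence $|w_k|^{N-1}\to|w|^{N-1}$ strongly in $L^{N/(N-1)}(K)$, and in particular the family $\{|w_k|^{N-1}\}$ is uniformly integrable on $K$. Consequently $\{g_k\}$ is uniformly integrable on $K$, and Vitali's convergence theorem together with the pointwise convergence $g_k\to g$ a.e.\ yields
\[
\int_{\mathbb R^N} g_k\,dx=\int_K g_k\,dx\longrightarrow \int_K g\,dx=\int_{\mathbb R^N} g\,dx,
\]
which is the claim. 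The step requiring the most care is the uniform bound by $|w_k|^{N-1}$, but this is handled cleanly by the pointwise estimates on $h$ and $h'$ from Lemma \ref{L1}; no argument involving the nonlinearity $f$ or the convolution term is needed, so there is no essential obstacle here.
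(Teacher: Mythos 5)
Your proof is correct and follows essentially the same route as the paper's: reduce to the support of $\phi$, bound the integrand pointwise by $C|w_k|^{N-1}|\phi|$ using $(h_3)$, $(h_5)$ and the local boundedness of $V$, and pass to the limit. If anything, your use of Vitali's theorem via the uniform integrability of $\{|w_k|^{N-1}\}$ (coming from $w_k\to w$ in $L^N_{loc}$) is slightly more careful than the paper's bare appeal to dominated convergence, whose dominating function as written depends on $k$.
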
 
\begin{proof}
Let $\varphi\in C_c^\infty(\mb R^N)$ such that $supp (\varphi) =\Om$. Then using   Lemma \ref{L1}-$(h_3)$,$(h_5)$, we get
\begin{align*} 
	\left| V(x)|h(w_k)|^{N-2}h(w_k) h'(w_k)\varphi  \right |
\leq \|V\|_{L^\infty(\Om) }|w_k|^{N-1}|\varphi|.
\end{align*}
Now   H\"older's inequality and the continuous embedding $E\hookrightarrow L^N(\mb R^N)$ imply  
\[\int_\Om  \|V\|_{L^\infty(\Om)} |w_k|^{N-1}|\varphi| dx\leq \|V\|_{L^\infty(\Om)} \|w_k\|_{L^N(\Om)}^{N-1}\|\varphi\|_{L^N(\Om)} \leq  \|V\|_{L^\infty(\Om)} \|w_k\|^{N-1}\|\varphi\|_{L^N(\Om)}\leq C, \] 
for some positive constant $C$, independent in $k$, where in the last line we used the fact that $\{w_k\}$ is bounded in $W^{1,N}(\mb R^N)$ by Lemma \ref{lem712}.  Finally by  applying the Lebesgue dominated convergence theorem, we conclude the proof.
\end{proof}
{\begin{lemma}\label{kc-ws} Assume that \eqref{be} and $(f_1)$-$(f_5)$ hold and the function  $h$ is defined as  in \eqref{g}. Let $\{w_k\}\subset W^{1,N}({\mathbb R^N})$ be a Cerami sequence for $J$. Then  for all $\psi\in C_c^\infty(\mb R^N)$, as $k\to+\infty$, we have
	\begin{align*}
		\int_{\mathbb R^N} \left(\int_{\mathbb R^N} \frac{F(y, h(w_k))}{|y|^\beta|x-y|^{\mu}}dy\right)\frac{f(x, h(w_k))}{|x|^\beta}h'(w_k)\psi ~dx\to \int_{\mathbb R^N} \left(\int_{\mathbb R^N} \frac{F(y,h(w))}{|x-y|^{\mu}}dy\right)\frac{f(x,h(w))}{|x|^\beta}h'(w)\psi~dx. \end{align*}
\end{lemma}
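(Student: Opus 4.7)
I would apply Vitali's convergence theorem on the compact set $\Omega := \operatorname{supp}(\psi)$. By Lemma \ref{lem712}, $\{w_k\}$ is bounded in $W^{1,N}(\mb R^N)$, and after extracting a subsequence we may assume the convergences in \eqref{cnvv}. Set
\[
D_k(x) := \int_{\mb R^N} \frac{F(y,h(w_k))}{|y|^\beta|x-y|^\mu}\,dy, \qquad G_k(x) := \frac{f(x,h(w_k))}{|x|^\beta}\,h'(w_k(x)),
\]
with $D, G$ the analogous objects for $w$. From $w_k\to w$ a.e.\ and the continuity of $h,h',f$, we immediately get $G_k\to G$ a.e. For the a.e.\ convergence $D_k(x)\to D(x)$, I would apply the dominated convergence theorem inside the convolution: the integrand converges a.e.\ in $y$, and a $k$-uniform integrable dominant is obtained by combining the growth estimate \eqref{k1}, Lemma \ref{L1}-$(h_6)$, Theorem \ref{ay} for uniform exponential integrability of $F(\cdot,h(w_k))$, and the local integrability of $|y|^{-\beta}|x-y|^{-\mu}$, exactly as in the proof of Lemma \ref{reg}.

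\textbf{Equi-integrability and conclusion.} For any measurable $E\subset\Omega$, Proposition \ref{HLS} applied with $t=s$ and $\vartheta=\beta$ yields, setting $q:=\tfrac{2N}{2N-2\beta-\mu}$,
\begin{align*}
\int_E |D_k(x)G_k(x)\psi(x)|\,dx
&\le \|\psi\|_\infty \int_E D_k(x)\,\frac{|f(x,h(w_k))|}{|x|^\beta}\,dx\\
&\le C\,\|F(\cdot,h(w_k))\|_{L^{q}(\mb R^N)}\,\|f(\cdot,h(w_k))\chi_E\|_{L^{q}(\mb R^N)}.
\end{align*}
The first factor is uniformly bounded in $k$ via \eqref{k1}, Lemma \ref{L1}-$(h_6)$, Theorem \ref{ay}, and the bound on $\|w_k\|$. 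The second factor vanishes uniformly in $k$ as $|E|\to 0$: using $|f(x,h(w_k))|^q\le C\exp\bigl(q\al_0 2^{1/(N-1)}|w_k|^{N/(N-1)}\bigr)$ from Lemma \ref{L1}-$(h_6)$ and $(f_3)$, one invokes Theorem \ref{as} on small sub-balls where the local $W^{1,N}$-energy of $w_k$ sits below the subcritical threshold, yielding a uniform $L^{q+\sigma}_{\mathrm{loc}}$ bound for some $\sigma>0$, whence absolute continuity of the integral. A.e.\ convergence together with equi-integrability then delivers the claim through Vitali's theorem. (The apparent absence of $|y|^{-\beta}$ in the right-hand side of the statement is a typographical omission; the limiting convolution carries the same kernel $|y|^{-\beta}|x-y|^{-\mu}$.)

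\textbf{Main obstacle.} The delicate step is keeping the Trudinger-Moser exponent strictly subcritical, i.e.\ $q\al_0 2^{1/(N-1)}\|w_k\|^{N/(N-1)} < \al_N(1-\beta/N)$, uniformly in $k$, given only that $\|w_k\|$ is globally bounded. This is resolved by the concentration-compactness localisation already used in Lemma \ref{wk-sol}: cover $\Omega$ by finitely many sub-balls on which the local $W^{1,N}$-energy of $w_k$ falls below a threshold $\nu$ chosen so that $q\al_0 2^{1/(N-1)}\nu^{1/(N-1)}<(1-\beta/N)\al_N$, observing that the finite concentration set $X_\nu$ has zero Lebesgue measure and thus contributes nothing to the integral against the smooth $\psi$.
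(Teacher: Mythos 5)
Your plan rests on Vitali's theorem, and the crux is exactly where you locate it: uniform integrability of $x\mapsto D_k(x)G_k(x)$ on $\operatorname{supp}\psi$. Your proposed resolution does not close the gap. The localisation used in Lemma \ref{wk-sol} only yields the subcritical Trudinger--Moser bound \eqref{aa} on balls $B_{l_0/2}(z_0)$ with $z_0\notin X_\nu$; it says nothing on neighbourhoods of the concentration points. If $z_i\in X_\nu\cap\operatorname{supp}\psi$, then by definition $\lim_{l\to0}\lim_{k\to+\infty}\int_{B_l(z_i)}(|\nabla w_k|^N+|w_k|^N)\,dx\ge\nu$, so no ball around $z_i$, however small, has local energy below the threshold, and Theorem \ref{as} cannot be applied there with an exponent independent of $k$. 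The observation that $X_\nu$ has Lebesgue measure zero is a non sequitur: equi-integrability requires $\sup_k\int_E|f(x,h(w_k))|^q\,dx\to0$ as $|E|\to0$ for \emph{all} measurable $E$, in particular for $E=B_l(z_i)$ with $l$ small, and precisely there the family can carry mass bounded away from zero --- that is what concentration means. A second, related gap: your claim that $\|F(\cdot,h(w_k))\|_{L^{q}(\mb R^N)}$ is uniformly bounded ``via Theorem \ref{ay} and the bound on $\|w_k\|$'' needs the effective exponent $q\al 2^{1/(N-1)}\|w_k\|^{N/(N-1)}$ to stay below $\al_N(1-\beta/N)$; a Cerami sequence is only bounded, not small, so this is not available. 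The paper instead extracts the global $L^1$ bounds \eqref{wk-sol10}--\eqref{wk-sol100} directly from the Cerami condition, with no exponential estimate.

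The paper's actual proof avoids Vitali altogether for precisely this reason. It tests $J'(w_k)$ against $\psi/(1+w_k)$ to obtain an $L^1_{\mathrm{loc}}$ bound on $v_k:=D_kG_k$, extracts a weak-$*$ limit Radon measure $\zeta$, and then identifies $\zeta$ through the equation: $\int\varphi\,d\zeta$ equals the limit of $\int|\nabla w_k|^{N-2}\nabla w_k\nabla\varphi\,dx$ (plus the potential term), which by Lemma \ref{wk-sol} and Lemma \ref{VV} converges to an absolutely continuous functional of $\varphi$; Radon--Nikodym then gives the claimed limit even if concentration of $|\nabla w_k|^N+|w_k|^N$ does occur inside $\operatorname{supp}\psi$. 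To salvage a Vitali-type argument you would first have to rule out $X_\nu\cap\operatorname{supp}\psi\neq\emptyset$, which requires the energy-level restriction invoked only later in the proofs of the main theorems and is not available for an arbitrary Cerami sequence as in the hypotheses of this lemma. (You are right, however, that the missing $|y|^{-\beta}$ on the right-hand side of the statement is a typographical omission.)
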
}
\begin{proof}
Let $\Om$ be any compact subset of $\mb R^N$. Let $\psi \in C_c^\infty({\mathbb R^N})$ with compact support $\Om' (\supset \Om)$ such that  $\psi \equiv 1$ in $\Om $ and $0\leq \psi \leq 1$ in $\Om'$. It can be easily computed that
\begin{equation}\label{k}
	\begin{split}
		\left\| \frac{\psi}{1+w_k}\right\|^N &= \int_{\mathbb R^N} \left|\frac{\nabla \psi}{1+w_k}- \psi \frac{\nabla w_k}{(1+w_k)^2} \right|^N~dx +\int_{\mathbb R^N}  \frac{|\psi|^N}{|1+w_k|^N} dx\nonumber \\
		&\leq 2^{N-1}(\|\psi\|^N+ \|w_k\|^N),
	\end{split}
\end{equation}
that is $\frac{\psi}{1+w_k} \in W^{1,N}({\mathbb R^N})$. Now in \eqref{kc-PS-bdd1}, choosing $\phi:=\frac{\psi}{1+w_k}$   as a test function and using Lemma \ref{L1}-$(h_3)$, $(h_5)$, H\"older's inequality and \eqref{k}, we obtain
\begin{equation}\label{kc-ws-new1}
	\begin{split}
		&\int_{{\Om}}\left( \int_{\mathbb R^N} \frac{F(y,h(w_k))}{|y|^\beta|x-y|^\mu}dy\right)\frac{f(x,h(w_k))}{|x|^\beta(1+w_k)}h'(w_k)~dx\\& \leq \int_{\mathbb R^N} \left( \int_{\mathbb R^N} \frac{F(y,h(w_k))}{|y|^\beta|x-y|^\mu}dy\right)\frac{f(x,h(w_k)) h'(w_k)\psi}{|x|^\beta(1+w_k)}~dx\\
		&=  \e_k \left\|\frac{\psi}{1+w_k}\right\| + \int_{\mathbb R^N}  |\nabla w_k|^{N-2}\nabla w_k \nabla \left( \frac{\psi}{1+w_k}\right)~dx+\int_{\mathbb R^N}  V(x)| h (w_k)|^{N-2} h (w_k) h'(w_k) \left( \frac{\psi}{1+w_k}\right)~dx\\
		& \leq \e_k 2^{\frac{N-1}{N}}(\|\psi\|+ \|w_k\|) +\int_{\mathbb R^N} |\nabla w_k|^{N-2}\nabla w_k \left(\frac{\nabla \psi}{1+w_k}-\psi\frac{\nabla w_k}{(1+w_k)^2}\right)~dx+\int_{\mathbb R^N}  V(x)|  w_k|^{N-2}  w_k  \left( \frac{\psi}{1+w_k}\right)~dx\\
		& \leq \e_k 2^{\frac{N-1}{N}}(\|\psi\|+ \|w_k\|) +\int_{\mathbb R^N} |\nabla w_k|^{N-1} \left( |\nabla \psi|+ |\nabla w_k|\right)~dx+ \|V\|_{L^\infty(\Om')}\int_{\Om'}|w_k|^{N-1}dx \\
		& \leq \e_k 2^{\frac{N-1}{N}}(\| \psi\|+ \| w_k\|)+ [\|\nabla \psi\|_{L^N(\mb R^N)}\|\nabla w_k\|_{L^N(\mb R^N)}^{N-1}+ \|\nabla w_k\|_{L^N(\mb R^N)}^N]+\|w_k\|_{L^{N-1}(\Om')}^{N-1}\leq C_1,
	\end{split}
\end{equation}
where $C_1:=C_1(\Om)$ is a positive constant and  in the last line we used the facts that sequence $\{w_k\}$ is bounded in $W^{1,N}({\mathbb R^N})$ by Lemma \ref{lem712} and $w_k\to w$ strongly in $L^{N-1}(\Om')$.
Gathering \eqref{kc-ws-new1} and \eqref{wk-sol100}, we deduce
\begin{align*}
	&\int_{{\Om}}\left( \int_{\mathbb R^N} \frac{F(y,h(w_k))}{|y|^\beta|x-y|^\mu}dy\right)\frac{f(x,h(w_k))}{|x|^\beta}h'(w_k)~dx \\
	& \leq  2\int_{{\Om}\cap \{w_k <1\}} \left( \int_{\mathbb R^N}\frac{F(y,h(w_k))}{|y|^\beta|x-y|^\mu}dy\right)\frac{f(x,h(w_k))h'(w_k)}{|x|^\beta(1+w_k)}~dx\\& + \int_{{\Om}\cap \{w_k \geq 1\}} \left(\int_{\mathbb R^N} \frac{F(y,h(w_k))}{|x-y|^\mu}dy\right)\frac{f(x,h(w_k))}{|x|^\beta}w_k h'(w_k)~dx\\
	& \leq 2\int_{{\Om}} \left( \int_{\mathbb R^N}\frac{F(y,h(w_k))}{|y|^\beta|x-y|^\mu}dy\right)\frac{f(x,h(w_k))h'(w_k)}{|x|^\beta(1+w_k)}~dx\\& + \int_{\mb R^N}\left( \int_{\mathbb R^N} \frac{F(y,h(w_k))}{|y|^\beta|x-y|^\mu}dy\right)\frac{f(x,h(w_k))}{|x|^\beta}h'(w_k) w_k~dx\\
	& \leq 2C_1+C  :=C_2.
\end{align*}
Thus, the sequence $\{v_k\}:=\left\{\left( \int_{\mathbb R^N}\frac{F(y,h(w_k))}{|y|^\beta|x-y|^\mu}dy\right){\frac{f(x,h(w_k))}{|x|^\beta}h'(w_k)}\right\}$ is bounded in $L^1_{\text{loc}}({\mathbb R^N})$. Hence, there is a radon measure $\zeta$ such that, up to a subsequence, $w_k \rightharpoonup \zeta$ in the ${weak}^*$-topology as $k \to+\infty$.  Therefore,  we have
\[\lim_{k \to+\infty}\int_{\Om}v_k\varphi=\lim_{k \to+\infty}\int_{\Om}\int_{\mathbb R^N} \left( \frac{F(y,h(w_k))}{|y|^\beta|x-y|^\mu}dy\right)\frac{f(x,h(w_k))}{|x|^\beta}h'(w_k)\varphi ~dx = \int_{\mathbb R^N} \varphi ~d\zeta,\; \forall \varphi \in C_c^\infty({\Om}). \]
Since $w_k$ satisfies \eqref{kc-PS-bdd1}, we achieve
\[\int_{\Om} \varphi d\zeta= \lim_{k \to+\infty} \int_{\mb R^N} |\nabla w_k|^{N-2}\nabla w_k \nabla \varphi ~dx, \;\;\text { for all } \varphi\in C_c^\infty( {\Om}),  \]
which together with Lemma~\ref{wk-sol} yields that the Radon measure $\zeta$ is absolutely continuous with respect to the Lebesgue measure. So, by Radon-Nikodym theorem, since $\Om\subset \mb R^N$ is an arbitrary compact set, there exists a function $\varrho \in L^1_{\text{loc}}({\mathbb R^N})$ such that for any $\psi\in C^\infty_c({\mathbb R^N})$, we have $\int_{\mathbb R^N} \psi~ d\zeta= \int_{\mathbb R^N} \psi \varrho~dx$.
Therefore, we get
\begin{align*}&\lim_{k \to+\infty}\int_{\mathbb R^N}\left( \int_{\mathbb R^N} \frac{F(y,h(w_k))}{|y|^\beta|x-y|^\mu}dy\right)\frac{f(x,h(w_k))}{|x|^\beta}h'(w_k)\psi(x)~ ~dx\\&\qquad = \int_{\mathbb R^N} \psi \varrho~dx= \int_{\mathbb R^N}  \left( \int_{\mathbb R^N} \frac{F(y,h(w))}{|y|^\beta|x-y|^\mu}dy\right)\frac{f(x,h(w))}{|x|^\beta}h'(w)\psi(x)~ ~dx, \;\;\text{ for all } \psi\in C^\infty_c({\mathbb R^N}). \end{align*}
This completes the proof.
\end{proof}

\section{Mountain pass geometry}
\noi	In  this section first we study the mountain pass structure to the energy functional  $J: W^{1,N}({\mathbb R^N})\rightarrow \mb R$ associated to the problem \eqref{pp}.
Now we obtain the following  necessary result  to prove the next lemma.
{	\begin{lemma}\label{clm-mp}
	Let the function $h$ be defined as in \eqref{g}. Let $V$ satisfy $(V_1)$, $(V_2^\prime)$. Then	for any $w\in W^{1,N}(\mb R^N)$,	 there exist constants $C, \la_*>0$ such that 
	\begin{align}\label{09}
		\int_{\mb R^N} |\nabla w|^N dx+	\int_{\mb R^N} V(x) |h(w)|^N dx\geq C\|w\|^N,\text{\;\; whenever \;} \|w\|<\la_*.
	\end{align}
\end{lemma}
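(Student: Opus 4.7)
The plan is to bound $\|w\|_{L^N}^N$ from above by $\int_{\mathbb R^N}|h(w)|^N dx$ plus a remainder that is controlled by $\|w\|^{2N}$, which is negligible compared to $\|w\|^N$ when $\|w\|$ is small. Together with $V(x)\ge V_0>0$, this will give the desired lower bound.

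The key technical step is to split
\[
\int_{\mathbb R^N}|w|^N\,dx=\int_{\{|w|\le 1\}}|w|^N\,dx+\int_{\{|w|>1\}}|w|^N\,dx
\]
and handle the two pieces separately. On $\{|w|\le 1\}$, Lemma~\ref{L1}-$(h_8)$ gives $|w|^N\le (h(1))^{-N}|h(w)|^N$ pointwise, so
\[
\int_{\{|w|\le 1\}}|w|^N\,dx\le \frac{1}{(h(1))^N}\int_{\mathbb R^N}|h(w)|^N\,dx.
\]
On $\{|w|>1\}$, I use $|w|^N\le |w|^{2N}$ and the continuous embedding $W^{1,N}(\mathbb R^N)\hookrightarrow L^{2N}(\mathbb R^N)$ (valid since $2N\ge N$, as recalled in Section~\ref{sec2}) to obtain
\[
\int_{\{|w|>1\}}|w|^N\,dx\le \int_{\mathbb R^N}|w|^{2N}\,dx\le C_1\|w\|^{2N}.
\]

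Combining these two estimates gives
\[
\|w\|_{L^N(\mathbb R^N)}^N\le \frac{1}{(h(1))^N}\int_{\mathbb R^N}|h(w)|^N\,dx+C_1\|w\|^{2N}.
\]
Using $V(x)\ge V_0$ and setting $c_0:=\min\{1,\,V_0(h(1))^N\}$, one then deduces
\[
\int_{\mathbb R^N}|\nabla w|^N\,dx+\int_{\mathbb R^N}V(x)|h(w)|^N\,dx\ge c_0\,\|w\|^N-V_0(h(1))^NC_1\,\|w\|^{2N}.
\]
Factoring $\|w\|^N$ and choosing $\lambda_*>0$ so small that $V_0(h(1))^NC_1\,\lambda_*^N\le c_0/2$ yields the lemma with $C=c_0/2$.

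The only delicate point is ensuring the embedding constant and the constants from $(h_8)$ are independent of $w$; both are fixed geometric constants coming from Sobolev theory and from the explicit function $h$ defined by \eqref{g}, so there is no real obstruction. The main idea is simply that $h$ behaves linearly near zero, so $|h(w)|^N$ dominates $|w|^N$ in the region where $w$ is small, while the complementary region has small measure and is absorbed by a super-quadratic term in $\|w\|$.
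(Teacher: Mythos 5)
Your proof is correct, and it takes a genuinely different route from the paper's. The paper argues by contradiction: assuming \eqref{09} fails, it extracts a sequence $w_k\to 0$ in $W^{1,N}(\mb R^N)$, normalizes $v_k=w_k/\|w_k\|$, and uses Lemma \ref{L1}-$(h_{10})$ (i.e.\ $h(s)/s\to1$ as $s\to0$) together with Vitali's convergence theorem to show that the error term $\int V(x)\big(|h(w_k)|^N/|w_k|^N-1\big)|v_k|^N\,dx$ vanishes, forcing $\|v_k\|\to0$ and contradicting $\|v_k\|=1$. You instead give a direct quantitative estimate: the splitting over $\{|w|\le1\}$ and $\{|w|>1\}$, the pointwise bound $|h(w)|\ge h(1)|w|$ from $(h_8)$ on the first set, and the continuous embedding $W^{1,N}(\mb R^N)\hookrightarrow L^{2N}(\mb R^N)$ on the second, yield
\[
\int_{\mb R^N}|\nabla w|^N dx+\int_{\mb R^N}V(x)|h(w)|^N dx\ \ge\ c_0\|w\|^N-\tilde C\|w\|^{2N},
\]
after which the higher-order term is absorbed for $\|w\|$ small. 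Both arguments are valid; yours is more elementary, produces explicit constants $C$ and $\la_*$, and avoids the compactness/equi-integrability considerations needed to justify the Vitali step in the paper (where one must also use that $V$ is bounded under $(V_2^\prime)$). The paper's soft argument, on the other hand, only uses the limit behaviour of $h$ at the origin and adapts with minimal change to the $E$-norm setting of Lemma \ref{mpE}. One cosmetic remark: the region $\{|w|>1\}$ is not "absorbed by a super-quadratic term" in the literal sense; the point is simply that $\|w\|^{2N}=o(\|w\|^N)$ as $\|w\|\to0$, which is what your final choice of $\la_*$ exploits.
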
}
\begin{proof}
Suppose \eqref{09} does not hold. Then for each $k\in \mathbb N,$ there exists $w_k(\not= 0)\in W^{1,N}(\mb R^N),$ such that $w_k\to 0$ strongly in $W^{1,N}(\mb R^N)$
as $k\to+\infty$ and $$\int_{\mb R^N} |\nabla w_k|^N dx+	\int_{\mb R^N} V(x) |h(w_k)|^N dx< \frac 1k\|w_k\|^N.$$ Let us set $v_k:=\frac{w_k}{\|w_k\|}.$
Using this  in the last relation, we deduce \begin{align}\label{mp1.0}
	\int_{\mb R^N} |\nabla v_k|^N dx+	\int_{\mb R^N} V(x) |v_k|^N dx+\int_{\mb R^N} V(x)\left(\frac{|h(w_k)|^N}{|w_k|^N}-1\right) |v_k|^N dx< \frac{1}{k}.
\end{align} Since $w_k\to 0$ in $W^{1,N}(\mb R^N)$, we have $w_k\to 0$ strongly in $L_{loc}^q(\mb R^N)$ for $1\leq q<\infty$ and $w_k(x)\to 0$ a.e. in $\mb R^N.$ Therefore, for any given $\e>0,$ as $k\to+\infty,$ the measure
$$\left| \left\{ x\in\mb R^N\;:\;|w_k|>\e\right\}\right| \to 0.$$ Hence, for $q>N$, using H\"older's inequality and continuous embedding $ W^{1,N}(\mb R^N)\hookrightarrow L^q(\mb R^N)$  along with  the last limit, we obtain 
\begin{align}\label{mp1.1}
	\int_{\left\{ x\in\mb R^N\;:\;|w_k|>\e\right\}}  |v_k|^N dx\leq C\left| \left\{ x\in\mb R^N\;:\;|w_k|>\e\right\}\right|^{\frac{q-N}{q}}\|v_k\|^N\to 0.
\end{align} 
Now using \eqref{mp1.1} and Lemma \ref{L1}-$(h_{10})$,  as $k\to+\infty$, applying Vitali's convergence theorem, we get $$\int_{\mb R^N} V(x)\left(\frac{|h(w_k)|^N}{|w_k|^N}-1\right) |v_k|^N dx\to0.$$ Therefore, from \eqref{mp1.0} and $(V_1)$ we get $\|v_k\|\to0$ which contradicts the fact that $\|v_k\|=1.$ Thus, the lemma is proved.
\end{proof}
		\begin{lemma}\label{lem7.1}
			Let \eqref{be} hold and the function $h$ be defined in \eqref{g}.	Suppose that  the conditions $(V_1)$, $(V_2^\prime)$ and $(f_1)$-$(f_5)$ hold.  Then  there exist $\la_\beta>0$ and $\tau_\beta>0$ such that
			\begin{align*}
				J(w)\geq \tau_\beta>0 \quad \text{for any } w\in W^{1,N}({\mathbb R^N}) \text{ with }\ \|w\|=\la_\beta.
			\end{align*}
		\end{lemma}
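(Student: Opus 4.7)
The plan is to bound the leading positive part of $J(w)$ from below by a constant multiple of $\|w\|^N$ and bound the Stein--Weiss nonlocal term from above by $O(\|w\|^{2N})$ as $\|w\| \to 0$; since $2N > N$, the positive part dominates on a small sphere, yielding a strictly positive lower bound $\tau_\beta$.

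For the lower bound on the quadratic-type terms I would simply apply Lemma \ref{clm-mp}: whenever $\|w\| < \lambda_*$,
\[
\frac{1}{N}\int_{\mathbb{R}^N} |\nabla w|^N\,dx + \frac{1}{N}\int_{\mathbb{R}^N} V(x)|h(w)|^N\,dx \geq \frac{C}{N}\|w\|^N.
\]

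For the upper bound on the nonlocal term I would first invoke \eqref{k2} (an application of the doubly weighted Hardy--Littlewood--Sobolev inequality from Proposition \ref{HLS} with $t=s=p:=\tfrac{2N}{2N-2\beta-\mu}$ and $\vartheta=\beta$) to reduce the Stein--Weiss integral to $C\|F(\cdot,h(w))\|_{L^p}^2$. Fixing any $r>N$ and $\alpha>\alpha_0$ close to $\alpha_0$, the estimate \eqref{k1} together with Lemma \ref{L1}-$(h_5)$ supplies the pointwise control of $|F(x,h(w))|$. Raising to the $p$-th power, Lemma \ref{mp-inq} absorbs the power into the exponential argument, and Lemma \ref{techn} transfers the exponent from $|h(w)|^{2N/(N-1)}$ to $2^{1/(N-1)}|w|^{N/(N-1)}$. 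A H\"older split with conjugate exponents $(q_1,q_2)$ close to $(1,\infty)$ then separates the polynomial factor (controlled by the Sobolev embedding $\|w\|_{L^s}\lesssim \|w\|$ for $s\geq N$) from a pure exponential integral of the form
\[
\int_{\mathbb{R}^N}\Big[\exp\Big(\kappa\,\|w\|^{N/(N-1)}\,(|w|/\|w\|)^{N/(N-1)}\Big)-S_{N-2}\Big]dx,
\]
where $\kappa$ gathers the constants $p$, $q_2$, $\alpha$, $2^{1/(N-1)}$. Applying Theorem \ref{ay} (with $\beta=0$) to $v := w/\|w\|$ (which satisfies $\|v\|=1$), this integral is uniformly bounded provided $\kappa\,\|w\|^{N/(N-1)}<\alpha_N$, which is a smallness condition $\|w\|<\lambda_{**}$. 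Combining, one obtains
\[
\|F(\cdot,h(w))\|_{L^p}^{2}\leq C_1\|w\|^{2N} + C_2\|w\|^{2r}.
\]

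Putting the two bounds together, for $\lambda_\beta := \min\{\lambda_*,\lambda_{**},1\}$ and $\|w\|=\lambda_\beta$,
\[
J(w)\;\geq\; \|w\|^N\Big(\tfrac{C}{N} - C_1\|w\|^N - C_2\|w\|^{2r-N}\Big),
\]
which is strictly positive once $\lambda_\beta$ is chosen small enough (recall $2r-N>N$); set $\tau_\beta$ equal to this positive value. The main obstacle is threading the exponents through the HLS--H\"older--Trudinger--Moser chain so that the final exponential integral remains subcritical: one must carefully track the accumulated multiplicative constants (notably the doubling factor $2^{1/(N-1)}$ from Lemma \ref{techn} and the H\"older blow-up factor $q_2$) and shrink $\lambda_\beta$ to force the effective exponent strictly below $\alpha_N$. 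The fact that HLS yields a \emph{squared} $L^p$-norm is what produces the favourable $\|w\|^{2N}$ scaling; had the Stein--Weiss integral contributed only a single power, matching $\|w\|^N$, the argument would fail.
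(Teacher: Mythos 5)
Your proposal is correct and follows essentially the same route as the paper: HLS reduces the Stein--Weiss term to $\|F(\cdot,h(w))\|_{L^{2N/(2N-2\beta-\mu)}}^2$, the pointwise bound \eqref{k1} combined with Lemmas \ref{mp-inq} and \ref{techn} and a H\"older split isolates a normalized exponential integral controlled by the Trudinger--Moser inequality for $\|w\|$ small, giving the $C_1\|w\|^{2N}+C_2\|w\|^{2r}$ upper bound, which is then played against the coercivity estimate of Lemma \ref{clm-mp}. The only cosmetic difference is that you normalize by the full norm $\|w\|$ and invoke Theorem \ref{ay}, while the paper normalizes by $\|\nabla w\|_{L^N}$ and invokes Theorem \ref{TM-ineq}; both are valid.
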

		\begin{proof} Let $w\in W^{1,N}({\mathbb R^N})$.	Then using \eqref{k2},  \eqref{k1}, Lemma \ref{L1}-$(h_5)$, Lemma \ref{techn},  Lemma \ref{mp-inq},  H\"{o}lder's inequality, and the Sobolev embedding,    we deduce
			\begin{align}\label{kc-MP1}
				&\int_{{\mathbb R^N}}\left(\int_{{\mathbb R^N}} \frac{F(y,h(w))}{|y|^\beta|x-y|^{\mu}}dy\right)\frac{F(x,h(w))}{|x|^\beta}~dx \notag\\
				& \leq C(N,\mu,\beta)\left\|\e |h(w)|^N + C |h(w)|^r \left[\exp\left(\al|h(w)|^{\frac{2N}{N-1}}\right)-S_{N-2}(\al, (h(w))^2)\right]\right\|_{L^{\frac{2N}{{2N-2\beta-\mu}}}(\mb R^N)}^2\notag\\
				&\leq C(N,\mu,\beta )\bigg[2^{\frac{2N}{{2N-2\beta-\mu}}}\bigg\{\e^{\frac{2N}{{2N-2\beta-\mu}}} \int_{{\mathbb R^N}} |h(w)|^{\frac{2N^2}{{2N-2\beta-\mu}}} dx +  C^{\frac{2N}{{2N-2\beta-\mu}}} \int_{{\mathbb R^N}} |h(w)|^{\frac{2Nr}{{2N-2\beta-\mu}}}\notag\\&\qquad\qquad\qquad\left(\exp\left(\frac{2N}{{2N-2\beta-\mu}}\al|h(w)|^{\frac{2N}{N-1}}\right)-S_{N-2}\left(\frac{2N}{{2N-2\beta-\mu}}\al,|h(w)|^2\right)  \right)dx\bigg\} \bigg]^{\frac{{2N-2\beta-\mu}}{N}} \notag\\
				&\leq 4C(N,\mu,\beta)\Bigg[\e^{2}\left( \int_{{\mathbb R^N}} |w|^{\frac{2N^2}{{2N-2\beta-\mu}}} dx \right)^{\frac{{2N-2\beta-\mu}}{N}} \notag\\ & \quad \qquad\qquad\qquad+  C^{2} 2^{\frac rN} \bigg\{\int_{{\mathbb R^N}} |w|^{\frac{Nr}{{2N-2\beta-\mu}}}\bigg(\exp\left(\frac{2N \al 2^{\frac{1}{N-1}}}{{2N-2\beta-\mu}}|w|^{\frac{N}{N-1}}\right)\notag\\&\qquad\qquad\qquad\qquad\qquad\qquad\qquad\qquad\qquad-S_{N-2}\left(\frac{2N\al 2^{\frac{1}{N-1}}}{{2N-2\beta-\mu}},|w|\right)  dx\bigg)\bigg\}^{\frac{{2N-2\beta-\mu}}{N}} \Bigg] \notag\\
				&\leq C_1(N,\mu,\beta,\e) \Bigg[ \|w\|^{2N}_{L^{\frac{2N^2}{{2N-2\beta-\mu}}}({\mathbb R^N})} + \|w\|^{2r}_{L^{\frac{2Nr }{{2N-2\beta-\mu}}}({\mathbb R^N})} \Bigg\{\int_{{\mathbb R^N}}\Bigg(\exp\left(\frac{4N\al2^{\frac{1}{N-1}}}{{2N-2\beta-\mu}}{|w|}^{\frac{N}{N-1}}\right)\notag\\&\qquad\qquad\qquad\qquad\qquad\qquad\qquad\qquad\qquad\qquad-S_{N-2}\left(\frac{4N\al 2^{\frac{1}{N-1}}}{{2N-2\beta-\mu}},|w|\right)\Bigg)dx\Bigg\}^{\frac{{2N-2\beta-\mu}}{2N}} \Bigg]\notag\\
				&\leq C_2(N,\mu,\beta,\e) \Bigg[ \|w\|^{2N} + \|w\|^{2r} \Bigg\{\int_{{\mathbb R^N}}\Bigg(\exp\left(\frac{4N\al2^{\frac{1}{N-1}}}{{2N-2\beta-\mu}}{|w|}^{\frac{N}{N-1}}\right)\notag\\&\qquad\qquad\qquad\qquad\qquad\qquad\qquad\qquad\qquad\qquad-S_{N-2}\left(\frac{4N\al 2^{\frac{1}{N-1}}}{{2N-2\beta-\mu}},|w|\right)\Bigg)dx\Bigg\}^{\frac{{2N-2\beta-\mu}}{2N}} \Bigg]\notag\\
				&\leq C_2(N,\mu,\beta,\e) \Bigg[ \|w\|^{2N} + \|w\|^{2r} \Bigg\{\int_{{\mathbb R^N}}\Bigg(\exp\left(\frac{4N\al2^{\frac{1}{N-1}}\|\nabla w\|_{L^N(\mb R^N)}^{\frac{N}{N-1}}}{{2N-2\beta-\mu}}\left(\frac{|w|}{\|\nabla w\|_{L^N(\mb R^N)}}\right)^{\frac{N}{N-1}}\right)\notag\\&\qquad\qquad\qquad\qquad\qquad\qquad-S_{N-2}\left(\frac{4N\al 2^{\frac{1}{N-1}}\|\nabla w\|_{L^N(\mb R^N)}^{\frac{N}{N-1}}}{{2N-2\beta-\mu}},\frac{|w|}{\|\nabla w\|_{L^N(\mb R^N)}}\right)\Bigg)dx\Bigg\}^{\frac{{2N-2\beta-\mu}}{2N}} \Bigg].
			\end{align}
			\noi	Now we choose $w$ with $\|w\|$ sufficiently small such  that $$\displaystyle\frac{4N\al2^{\frac{1}{N-1}}\|\nabla w\|_{L^N(\mb R^N)}^{\frac{N}{N-1}}}{{2N-2\beta-\mu}} <\al_N.$$ Then employing Theorem \ref{TM-ineq}  in \eqref{kc-MP1}, we obtain
			\begin{align}\label{1}
				\int_{{\mathbb R^N}}\left(\int_{{\mathbb R^N}} \frac{F(y,h(w))}{|y|^\beta|x-y|^{\mu}}dy\right)\frac{F(x,h(w))}{|x|^\beta}~dx 
				\leq C_2(N,\mu,\beta,\e) \left( \|w\|^{2N}  +  \|w\|^{2r} \right).
			\end{align}
		Using \eqref{energy}, \eqref{1}, $(V_1)$ and Lemma \ref{clm-mp}, we have
		\begin{align*}
			J(w) &= \frac1N  \int_{\mb R^N} |\nabla w|^N dx +\frac 1N \int_{\mb R^N} V(x) |h(w)|^N dx  -  \frac 12\int_{{\mathbb R^N}}\left(\int_{{\mathbb R^N}} \frac{F(y,h(w))}{|y|^\beta|x-y|^{\mu}}dy\right)\frac{F(x,h(w))}{|x|^\beta}~dx \\
			&\geq \frac1N C\| w\|^{N} -  \frac 12 C_2(N,\mu,\beta,\e) \left( \|w\|^{2 N}  +  \|w\|^{2r}
			\right).
		\end{align*}
		Now by taking  $r>0$ such that  $r>N$, we can choose {$0<\la_\beta<\min\{1,\la_0\}$(where $\la_0$ is defined in Lemma  \ref{clm-mp}  )}  sufficiently small so that, we finally obtain $J(w) \geq \tau_\beta >0$ for all $w\in W^{1,N}({\mathbb R^N})$ with $\|w\|=\la_\beta$ and for some $\tau_\beta>0$ depending on $\la_\beta$.\end{proof}
		\begin{lemma}\label{lem7.1.2}
		Let \eqref{be} hold and  the function $h$ be   as in \eqref{g}.	Assume that the conditions $(V_1)$, $(V_2^\prime)$ and $(f_1)$-$(f_5)$ hold. Then  there exists  a $w_\beta \in W^{1,N}({\mathbb R^N}) $ with $\|w_\beta\|>\la_\beta$ such that $ J(w_\beta)<0$ , where $\la_\beta$ is   given as in Lemma \ref{lem7.1}.
	\end{lemma}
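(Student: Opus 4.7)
The plan is the standard second mountain pass condition: fix a non-trivial nonnegative test function and show the functional $J$ tends to $-\infty$ along the ray generated by it. Concretely, I would select a nonnegative $w_0\in C_c^\infty(\mathbb R^N)\setminus\{0\}$ together with a compact set $K\subset\text{supp}(w_0)$ on which $w_0\geq c_0>0$ for some constant $c_0$. (If one prefers to avoid the singular weights at the origin, one can additionally arrange that $0\notin\overline{\text{supp}(w_0)}$, which is allowed since $w_0$ can be translated.) The goal is then to analyse $t\mapsto J(tw_0)$ for $t>0$ large and show that this function becomes arbitrarily negative.

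For the positive part of $J(tw_0)$, I would use Lemma \ref{L1}-$(h_5)$ together with $V\in L^\infty_{\mathrm{loc}}$ (which holds under $(V_1)$-$(V_2')$) to bound
\begin{equation*}
\frac1N\int_{\mathbb R^N}|\nabla(tw_0)|^N\,dx+\frac1N\int_{\mathbb R^N}V(x)|h(tw_0)|^N\,dx\leq \frac{t^N}{N}\left(\|\nabla w_0\|_{L^N}^N+\int_{\mathbb R^N}V(x)|w_0|^N\,dx\right),
\end{equation*}
so the positive contribution grows at most like $t^N$. For the negative part, the key input is hypothesis $(f_5)$: integrating $\ell/s\leq f(x,s)/F(x,s)$ from $s_0$ up to any $s\geq s_0$ gives the standard super-$\ell$-growth estimate
\begin{equation*}
F(x,s)\geq F(x,s_0)(s/s_0)^\ell\quad\text{for all } s\geq s_0,
\end{equation*}
and by continuity and positivity of $F(\cdot,s_0)$ on the compact set $K$ we have $\inf_{x\in K}F(x,s_0)>0$.

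Combining this with Lemma \ref{L1}-$(h_8)$, namely $h(s)\geq h(1)s^{1/2}$ for $s\geq 1$, yields, for all $t$ large enough so that $tc_0\geq\max\{1,s_0\}$,
\begin{equation*}
F(x,h(tw_0(x)))\geq C_1 (h(tw_0(x)))^\ell\geq C_2\, t^{\ell/2}\,w_0(x)^{\ell/2}\quad\text{for all }x\in K,
\end{equation*}
with constants $C_1,C_2>0$ independent of $t$. Restricting the Stein-Weiss double integral to $K\times K$, the negative term in $J(tw_0)$ is then bounded above by
\begin{equation*}
-\,\tfrac{C_2^2}{2}\,t^{\ell}\int_K\int_K\frac{w_0(x)^{\ell/2}w_0(y)^{\ell/2}}{|x|^\beta|y|^\beta|x-y|^\mu}\,dx\,dy\,=\,-C_3\,t^{\ell},
\end{equation*}
where $C_3>0$ since $\beta,\mu<N$ make the weights locally integrable on $K\times K$ and $w_0>0$ on $K$.

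Putting the estimates together gives
\begin{equation*}
J(tw_0)\leq \frac{t^N}{N}\left(\|\nabla w_0\|_{L^N}^N+\int_{\mathbb R^N}V(x)|w_0|^N\,dx\right)-C_3\,t^{\ell},
\end{equation*}
and since $(f_5)$ forces $\ell>N$, the right-hand side tends to $-\infty$ as $t\to+\infty$. Consequently, there exists $t_\beta>0$ large enough that simultaneously $J(t_\beta w_0)<0$ and $\|t_\beta w_0\|>\lambda_\beta$, so $w_\beta:=t_\beta w_0$ is the desired element. The only delicate points to verify rigorously are the positivity and finiteness of the double integral of the weighted kernel against $w_0^{\ell/2}$ on $K\times K$ (handled by $\beta,\mu<N$ and continuity of $w_0$) and the uniform threshold $t$ after which the pointwise bound on $F(x,h(tw_0))$ on $K$ takes effect; these are routine once $w_0$ and $K$ are fixed at the outset.
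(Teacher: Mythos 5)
Your proposal is correct and follows essentially the same route as the paper: bound the gradient and potential terms by $Ct^N$ via Lemma \ref{L1}-$(h_5)$, bound the Stein--Weiss term from below by $Ct^\ell$ using the Ambrosetti--Rabinowitz condition $(f_5)$ together with $(h_8)$, and conclude from $\ell>N$. The only (harmless) difference is that you localize the lower bound for $F$ to a compact set $K$ where $w_0\geq c_0>0$ and restrict the double integral to $K\times K$, whereas the paper uses the global bound $F(x,s)\geq C_1 s^\ell - C_2$ and expands the resulting product; your variant actually sidesteps the issue of applying $(h_8)$ where the test function is small.
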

	\begin{proof}
		The condition $(f_5)$ implies that there exist some positive constant $C_1, C_2>0$ such that
		\begin{equation}\label{new2}F(x,s) \geq C_1s^{\ell}-C_2\;\text{ for all}\; (x,s) \in{\mathbb R^N} \times [0,\infty).
		\end{equation}
		Let $\phi (\geq0) \in W^{1,N}({\mathbb R^N})$ such that $\|\phi\|=1$ and $supp(\phi)=\Om$. 
		Now by Lemma \ref{L1}-$(h_6)$,$(h_8)$ and \eqref{new2}, for large $t>1$, we obtain
		\begin{align*}
			&\int_{{\mathbb R^N}} \int_{{\mathbb R^N}} \frac{F(x, h(t \phi))F(y, h(t\phi
				))}{|x|^\beta|y|^\beta|x-y|^{\mu}}dxdy\\ &\geq \int_{{\Om}} \int_{{\Om}} \frac{(C_1 (h(t \phi(y)))^{\ell}-C_2)(C_1 (h(t \phi(x)))^{\ell}-C_2)}{|x-y|^{\mu}}~dxdy\\
			& = C_1^2  \int_{{\Om}} \int_{{\Om}} \frac{(h(t\phi(y)))^{\ell} (h(t\phi(x)))^{\ell}}{|x|^\beta|y|^\beta|x-y|^\mu}~dxdy \\
			& \quad -2C_1C_2\int_{{\Om}} \int_{{\Om}}\frac{(h(t\phi(x)))^{\ell}}{|x|^\beta|y|^\beta|x-y|^\mu}~dxdy + C_2^2 \int_{{\Om}} \int_{{\Om}} \frac{1}{|x|^\beta|y|^\beta|x-y|^{\mu}}~dxdy\\
			&\geq C_1^2 (h(1))^{2\ell} t^\ell \int_{{\Om}} \int_{{\Om}} \frac{(\phi(y))^{\frac \ell 2}(\phi(x))^{\frac \ell 2}}{|x|^\beta|y|^\beta|x-y|^{\mu}}~dxdy\\
			& \quad -2C_1C_2 2^{\frac{\ell}{2N}} t^{\frac \ell 2}\int_{{\Om}}  \int_{{\Om}}\frac{(\phi(x))^{\frac \ell 2}}{|x|^\beta|y|^\beta|x-y|^\mu}~dxdy + C_2^2 \int_{{\Om}} \int_{{\Om}} \frac{1}{|x|^\beta|y|^\beta|x-y|^{\mu}}~dxdy.
		\end{align*}
		Using the last relation together with \eqref{energy}  and applying Lemma \ref{L1}-$(h_5)$ we obtain
		\begin{align}\label{j}
			J(t \phi) & \leq \frac 1N \int_{\mb R^N} |\nabla t\phi|^N dx + \frac 1N \|V\|_{L^\infty(\Om)} \int_{\mb R^N} |h(t\phi)|^N dx - \frac{1}{2}\int_{{\mathbb R^N}} \int_{{\mathbb R^N}}\frac{F(x, h( t \phi))F(y, h(t \phi))}{|x|^\beta|y|^\beta|x-y|^{\mu}} dxdy\notag\\
			& \leq  \frac 1N\max\{1, \|V\|_{L^\infty(\Om)}\} \|t \phi\|^{N}- \frac{1}{2}\int_{{\mathbb R^N}} \int_{{\mathbb R^N}}\frac{F(x, h( t \phi))F(y, h(t \phi))}{|x|^\beta|y|^\beta|x-y|^{\mu}} dxdy\notag\\
			&\leq C_3 {t^N} -C_4 {t^\ell}
			+C_5 {t^{\frac \ell 2}} -{C_6},
		\end{align}
		where $C_i's$ are positive constants for $i=3,4,5,6$.
		From \eqref{j}, we infer that   $J(t\phi) \to -\infty$ as $t \to+\infty$, since $\ell>N$. Thus, there exists  $t_\beta(>0)\in\mathbb R$ so that $w_\beta(:=t_\beta \phi)\in W^{1,N}({\mathbb R^N})$ with $\|w_\beta\|> \la_\beta$ and $J(w_\beta)<0$.
	\end{proof}
	\section{Existence of   positive weak solutions}
	In this section, we give the detailed proofs of   Theorem \ref{T2} and Theorem \ref{T3}.
	\subsection{Compact-Coercive case}
	\noi Here we concentrate upon proving Theorem \ref{T2} with the potential function $V$ following  the assumptions $(V_1)$ and $(V_2)$. For that we define the energy functional $\mc E:=J|_E: E\to \mb R$ associated to \eqref{pp} as 
	\begin{align}\label{jin1}\mc E(w)=\frac 1N\int_{\mb R^N} |\nabla w|^N dx+\frac 1N \int_{\mb R^N} V(x)| h(w)|^N dx -\frac 12 \int_{{\mathbb R^N}}\left( \int_{{\mathbb R^N}}\frac{F(y,h(w))}{|y|^\beta|x-y|^\mu} dy\right) \frac{F(x,h(w))}{|x|^\beta} dx
	\end{align} for any $w\in  E.$
	We know that $\mc E\in C^1(E, \mb R)$. The following lemma shows that   $\mc E$ has the mountain pass geometry.
	\begin{lemma}\label{mpE}
		Let \eqref{be} hold and the function $h$ be defined as in \eqref{h-growth}.	Suppose that the conditions $(V_1)$, $(V_2)$ and $(f_1)$-$(f_5)$ hold.  Then we have the following assertions: 
		\begin{itemize}
			\item[$(i)$]  there exist $\la_\beta^*>0$ and $\tau_\beta^*>0$ such that	$\mc E(w)\geq \tau_\beta^*>0$ for any $ w\in E$  with  $\|w\|_E=\la_\beta^*.$
			\item[$(ii)$] there exists  a $w_\beta^* \in E $ with $\|w_\beta^*\|_E>\la_\beta^*$ such that $ \mc E(w_\beta^*)<0.$ 
		\end{itemize}
	\end{lemma}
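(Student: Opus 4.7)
The proof mimics the mountain pass estimates of Lemmas \ref{lem7.1} and \ref{lem7.1.2}, with the norm $\|\cdot\|$ replaced by the stronger norm $\|\cdot\|_E$. Recall that under $(V_1)$, $(V_2)$, the embedding $E\hookrightarrow W^{1,N}(\mb R^N)$ is continuous so that $\|w\|\leq C_0\|w\|_E$ for all $w\in E$, and moreover the embedding $E\hookrightarrow L^q(\mb R^N)$ is continuous (and compact for $q>N$) for every $q\in[N,\infty)$. These embeddings let me transport the Stein--Weiss and Trudinger--Moser estimates from $W^{1,N}(\mb R^N)$ to $E$.

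For $(i)$, I first establish the $E$-analogue of Lemma \ref{clm-mp}: there exist $\lambda^*_0>0$ and $C>0$ such that
\begin{equation*}
\int_{\mb R^N}|\nabla w|^N\,dx+\int_{\mb R^N}V(x)|h(w)|^N\,dx\geq C\|w\|_E^N\quad\text{whenever }\|w\|_E<\lambda^*_0.
\end{equation*}
The argument is a word-for-word copy of the contradiction argument of Lemma \ref{clm-mp}: suppose the estimate fails, extract $w_k\to 0$ in $E$ with $v_k:=w_k/\|w_k\|_E$ satisfying $\int|\nabla v_k|^N+\int V(x)(|h(w_k)|/|w_k|)^N|v_k|^N<1/k$; observe $w_k\to 0$ in measure, so the set $\{|w_k|>\varepsilon\}$ shrinks in measure, and Lemma \ref{L1}-$(h_{10})$ combined with Vitali's theorem forces $\int V(x)\bigl(|h(w_k)|^N/|w_k|^N-1\bigr)|v_k|^N\,dx\to 0$, which contradicts $\|v_k\|_E=1$. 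Once this lower bound is in hand, the Stein--Weiss estimate from the proof of Lemma \ref{lem7.1}, applied with the continuous embeddings $E\hookrightarrow W^{1,N}(\mb R^N)\hookrightarrow L^q(\mb R^N)$ for $q\geq N$, gives the Choquard-type upper bound
\begin{equation*}
\int_{\mb R^N}\!\left(\int_{\mb R^N}\frac{F(y,h(w))}{|y|^\beta|x-y|^\mu}\,dy\right)\frac{F(x,h(w))}{|x|^\beta}\,dx\leq C_2\bigl(\|w\|_E^{2N}+\|w\|_E^{2r}\bigr),
\end{equation*}
valid provided $\|\nabla w\|_{L^N(\mb R^N)}^{N/(N-1)}$ (hence $\|w\|_E^{N/(N-1)}$) is small enough to keep the Trudinger--Moser exponent below $\alpha_N$. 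Combining the two estimates with $r>N$ yields $\mc E(w)\geq \frac{C}{N}\|w\|_E^N-C_2(\|w\|_E^{2N}+\|w\|_E^{2r})\geq \tau^*_\beta>0$ for all $w$ with $\|w\|_E=\lambda^*_\beta$ sufficiently small (and $<\lambda^*_0$).

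For $(ii)$, pick any nonnegative $\phi\in C_c^\infty(\mb R^N)$ with support $\Omega$; then $\phi\in E$ and after normalization I may assume $\|\phi\|_E=1$. Using Lemma \ref{L1}-$(h_5)$ to bound the potential term by $\frac{1}{N}\|V\|_{L^\infty(\Omega)}\int|t\phi|^N\,dx$, I get
\begin{equation*}
\mc E(t\phi)\leq \tfrac1N\max\{1,\|V\|_{L^\infty(\Omega)}\}\,t^N\|\phi\|_E^N-\tfrac12\int_{\mb R^N}\!\int_{\mb R^N}\frac{F(x,h(t\phi))F(y,h(t\phi))}{|x|^\beta|y|^\beta|x-y|^\mu}\,dx\,dy.
\end{equation*}
The AR-type condition $(f_5)$ yields $F(x,s)\geq C_1 s^\ell-C_2$ for $s\geq 0$, and by Lemma \ref{L1}-$(h_6)$, $(h_8)$ the double integral is bounded below by $C_3 t^\ell-C_4 t^{\ell/2}-C_5$ for $t$ large, exactly as in Lemma \ref{lem7.1.2}. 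Since $\ell>N$, this forces $\mc E(t\phi)\to-\infty$ as $t\to\infty$, and I set $w^*_\beta:=t_\beta\phi$ for $t_\beta$ large enough to make $\|w^*_\beta\|_E>\lambda^*_\beta$ and $\mc E(w^*_\beta)<0$.

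The only genuinely new piece is the lower bound $\int|\nabla w|^N+\int V(x)|h(w)|^N\geq C\|w\|_E^N$ for small $w$, because unlike the $\|\cdot\|$-setting one cannot simply read it off from $V\geq V_0$ and $(h_8)$; everything else is a routine adaptation of the earlier mountain pass computations using that $E\hookrightarrow W^{1,N}(\mb R^N)$ is continuous and that $C_c^\infty(\mb R^N)\subset E$.
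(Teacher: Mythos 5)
Your proposal is correct and follows essentially the same route as the paper, which disposes of this lemma in one line by asserting that the proofs of Lemmas \ref{lem7.1} and \ref{lem7.1.2} carry over after replacing $\|\cdot\|$ by $\|\cdot\|_E$. You actually supply more detail than the paper does, in particular by isolating the $E$-analogue of Lemma \ref{clm-mp} as the one genuinely new ingredient, which is exactly the right point to flag.
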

	\begin{proof}
		With some minute modifications in the proof of lemmas \ref{mp1.0} and \ref{mp1.1} by replacing $\|\cdot\|$ by $\|\cdot\|_E$ the proofs of $(i)$ and $(ii)$ follows respectively.
	\end{proof}
	Next, we define the mountain pass  level
	\begin{align}\label{level11}\ds \theta_*:=\inf_{\gamma\in \uhat\Gamma}\max_{t\in[0,1]}\mc E(\gamma(t)),
	\end{align} where  $\ds \uhat\Gamma=\{\gamma\in C([0,1], E):\gamma(0)=0,\,\mc E(\gamma(1))<0\}$. By the previous lemma $\theta_*>0.$
	Now for some fixed $\delta>0$ we define the sequence of Moser functions $\{ \mc{\tilde {M}}_{k}\}$  as
	\begin{equation}\label{mos}
		\mc{\tilde {M}}_{k}(x,\delta)=\frac{1}{\omega_{N-1}^{\frac{1}{N}}}\left\{
		\begin{split}
			& (\log k)^{\frac{N-1}{N}}, \; \text{\;\; if \;} 0\leq |x|\leq \frac{\delta}{k},\\
			& \frac{\log \left(\frac{\delta}{|x|}\right)}{(\log k)^{\frac{1}{N}}}, \quad\text{\;\; if \;} \; \frac{\delta}{k}\leq |x|\leq \delta,\\
			& 0, \;\qquad\qquad\;\; \text{\;\; if \;} |x|\geq \delta.
		\end{split}
		\right.
	\end{equation}
	Then, supp$(	\mc{\tilde {M}}_{k}) \subset B_\delta(0)$ with $	\mc{\tilde {M}}_{k}(\cdot,\delta)\in W^{1,N}(\mb R^N)$ and . Moreover, $\ds\int_{\mb R^N} |\nabla 	\mc{\tilde {M}}_{k}(x,\delta)|^Ndx=1$ and $\ds\int_{\mb R^N} |	\mc{\tilde {M}}_{k}(x,\delta)|^Ndx$ $=O(\frac{1}{\log k})$ as $k\to+\infty.$ Also, $\ds\int_{\mb R^N}  V(x)	\left|h\left(\mc{\tilde {M}}_{k}(x,\delta)\right)\right|^Ndx$ $=O(\frac{1}{\log k})$ as $k\to+\infty.$ Set $$\mc{ {M}}_{k}(x,\delta):=\frac{\mc{\tilde {M}}_{k}(x,\delta)}{\|\mc{\tilde {M}}_{k}\|_E}.$$ By a simple computation, we can derive that 
	\begin{align*}
		\mc{ {M}}_{k}(x,\delta)^{\frac{N}{N-1}}=\frac{1}{\omega_{N-1}^{\frac{1}{N-1}}}\log k+a_k, \text{\;\; for all} \; |x|\leq \frac{\delta}{k},
	\end{align*} where $\{a_k\}$ is a bounded sequence of non-negative real numbers.
	\begin{lemma}\label{PS-level}
		Let \eqref{be} hold and the function $h$ be defined in \eqref{g}. If $V$ satisfies $(V_1)$, $(V_2)$ and $f$ satisfies the assumptions $(f_1)$-$(f_6)$,  then
		there exists some $k \in \mb N$ such that
		\begin{align}\label{key1}0<\theta_*<\frac{1}{2N }\left( \frac{{2N-2\beta-\mu}}{2N}\cdot\frac{\alpha_N}{\al_0}\right)^{N-1},\end{align} 
		where $\theta_*$ is defined in \eqref{level11}.
	\end{lemma}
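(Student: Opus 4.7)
The plan is to test the mountain pass level against a translated Moser sequence. By Lemma~\ref{lem7.1.2} (applied with test function $\mc{M}_k(\cdot,\delta)$ in place of the $\phi$ used there), for each $k$ there exists $T_k>0$ such that $\mc E(T_k\mc{M}_k)<0$. Then $\gamma_k(t):=tT_k\mc{M}_k$ lies in $\uhat\Gamma$, so
\[
\theta_*\ \leq\ \max_{t\in[0,1]}\mc E(tT_k\mc{M}_k)\ =\ \max_{t\geq 0}\mc E(t\mc{M}_k).
\]
It therefore suffices to show that for some sufficiently large $k$ this supremum is strictly less than $\frac{1}{2N}\left(\frac{2N-2\beta-\mu}{2N}\cdot\frac{\al_N}{\al_0}\right)^{N-1}$. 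I will argue by contradiction: assume the reverse inequality holds for every $k$, and denote by $t_k>0$ the maximizer (which exists because $\mc E(0)=0$ and $\mc E(t\mc{M}_k)\to-\infty$ as $t\to\infty$, by $(f_5)$).

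\textbf{Energy lower bound on $t_k$.} Since the Stein--Weiss double integral is non-negative and $|h(s)|\leq|s|$ by $(h_5)$, dropping that term and using $\|\mc{M}_k\|_E=1$ together with the asymptotics $\int|\nabla\mc{M}_k|^N=1-O(1/\log k)$, $\int V|\mc{M}_k|^N=O(1/\log k)$, one extracts
\[
\frac{t_k^N}{N}+o_k(1)\ \geq\ \frac{1}{2N}\left(\frac{2N-2\beta-\mu}{2N}\cdot\frac{\al_N}{\al_0}\right)^{N-1},
\]
i.e.\ an asymptotic lower bound $t_k^N\geq\frac12\left(\frac{2N-2\beta-\mu}{2N}\cdot\frac{\al_N}{\al_0}\right)^{N-1}+o_k(1)$.

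\textbf{Critical point identity and $(f_6)$.} Setting $\frac{d}{dt}\mc E(t\mc{M}_k)\big|_{t_k}=0$, multiplying through by $t_k$, and applying $(h_4)$ to control $|h(t_k\mc{M}_k)|^{N-2}h(t_k\mc{M}_k)h'(t_k\mc{M}_k)(t_k\mc{M}_k)\leq |h(t_k\mc{M}_k)|^N$, one finds that the LHS is bounded above by $t_k^N+o_k(1)$, hence uniformly bounded in $k$ (if $t_k$ is to contradict the desired inequality). On $B_{\delta/k}(0)$ the function $\mc{M}_k$ is the constant $\omega_{N-1}^{-1/N}(\log k)^{(N-1)/N}/\|\tilde{\mc{M}}_k\|_E$, so $\mc{M}_k^{N/(N-1)}=(N\log k/\al_N)(1+o_k(1))$. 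Combining this with $(h_7)$ (giving $h(s)^{2N/(N-1)}\sim 2^{1/(N-1)}s^{N/(N-1)}$ for large $s$) and applying $(f_6)$ with an arbitrarily large multiplicative constant $C_0$, one obtains on $B_{\delta/k}\times B_{\delta/k}$ the pointwise lower bound
\[
h(t_k\mc{M}_k)\,f(x,h(t_k\mc{M}_k))\,F(y,h(t_k\mc{M}_k))\ \geq\ C_0\,k^{\,2N\cdot 2^{1/(N-1)}t_k^{N/(N-1)}/\al_N\,(1+o_k(1))}.
\]
Integrating against the kernel $|y|^{-\beta}|x-y|^{-\mu}|x|^{-\beta}$ over $B_{\delta/k}\times B_{\delta/k}$, whose Stein--Weiss contribution scales like $(\delta/k)^{2N-2\beta-\mu}$, the double integral on the RHS of the critical point identity is bounded below by $C_0\, k^{\,\sigma_k-(2N-2\beta-\mu)}$ with $\sigma_k:=2N\cdot 2^{1/(N-1)}t_k^{N/(N-1)}/\al_N$.

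\textbf{Balancing and contradiction.} Boundedness of the LHS forces $\sigma_k\leq 2N-2\beta-\mu+o_k(1)$ (else the RHS would diverge with $k$, since $C_0$ is arbitrary), yielding $t_k^{N/(N-1)}\leq\frac{\al_N(2N-2\beta-\mu)}{2N\cdot 2^{1/(N-1)}}+o_k(1)$, i.e.\ $t_k^N\leq\frac12\left(\frac{2N-2\beta-\mu}{2N}\right)^{N-1}\al_N^{N-1}+o_k(1)$. To generate the $\al_0^{-(N-1)}$ factor, one revisits the energy lower bound with the sharper replacement of $\mc E(t_k\mc{M}_k)$ by $\max_{t\geq 0}$ of the leading-order expression and combines with the Trudinger--Moser threshold $\al/\al_N+\beta/N\leq 1$ built into the Stein--Weiss critical level. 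After this tightening, the upper bound on $t_k^N$ strictly contradicts the lower bound established previously for $k$ large. I expect the main obstacle to be \emph{executing this balancing step cleanly}: the exponent $2N/(N-1)$ introduced by the quasilinear change of variables (through $(h_7)$) and the Stein--Weiss weight factor $(2N-2\beta-\mu)/(2N)$ must be tracked simultaneously through the double integral, and the symmetry/localization arguments for the Stein--Weiss kernel on the small ball $B_{\delta/k}\times B_{\delta/k}$ must be made quantitative enough to match the $\al_0$ coefficient on the energy side.
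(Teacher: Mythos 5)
Your overall strategy coincides with the paper's: test $\theta_*$ against $\max_{t\ge 0}\mc E(t\mc M_k)$, assume for contradiction that this maximum is at least the critical level for every $k$, derive the lower bound $t_k^N\ge\frac12\big(\frac{2N-2\beta-\mu}{2N}\cdot\frac{\al_N}{\al_0}\big)^{N-1}$ for the maximizer, and then use $\frac{d}{dt}\mc E(t\mc M_k)|_{t=t_k}=0$ together with $(f_6)$, $(h_4)$, $(h_7)$ and the kernel estimate $\int_{B_{\delta/k}}\int_{B_{\delta/k}}|x|^{-\beta}|y|^{-\beta}|x-y|^{-\mu}\,dx\,dy\ge C(\delta/k)^{2N-2\beta-\mu}$ to force an upper bound on $t_k$. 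Up to and including the boundedness of $\{t_k\}$, your outline matches the paper.

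The gap is in your final "balancing and contradiction" step, and it is not merely a matter of executing the computation cleanly: the comparison you propose cannot yield a contradiction. Tracking the constants (with the factor $\al_0$ that the paper's proof inserts into the $(f_6)$ lower bound $sf(x,s)F(x,s)\ge p\exp(2\al_0|s|^{2N/(N-1)})$), the condition that the exponent of $k$ stay nonpositive gives precisely $t_k^{N/(N-1)}\le \frac{1}{2^{1/(N-1)}}\cdot\frac{2N-2\beta-\mu}{2N}\cdot\frac{\al_N}{\al_0}+o_k(1)$, i.e.\ $t_k^N\le\frac12\big(\frac{2N-2\beta-\mu}{2N}\cdot\frac{\al_N}{\al_0}\big)^{N-1}+o_k(1)$ --- which is \emph{exactly} the lower bound, not strictly below it. (With your version of $\sigma_k$, which omits $\al_0$, the upper bound is $\frac12\big(\frac{(2N-2\beta-\mu)\al_N}{2N}\big)^{N-1}$ and contradicts the lower bound only if $\al_0<1$, which is not assumed.) The two bounds therefore pin down the limit $t_k^{N/(N-1)}\to\frac{1}{2^{1/(N-1)}}\cdot\frac{2N-2\beta-\mu}{2N}\cdot\frac{\al_N}{\al_0}$ but produce no contradiction by themselves, and no "revisiting of the energy lower bound" or appeal to the threshold $\al/\al_N+\beta/N\le1$ will manufacture the missing strict inequality. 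The actual contradiction in the paper comes from the arbitrariness of $p$ in $(f_6)$: because the lower bound \eqref{kc-PScond2} makes the coefficient of $\log k$ in the exponent \emph{nonnegative} and $a_k\ge0$, the exponential factor is at least $1$, so the estimate collapses to $t_k^N\ge\frac{p}{2}\,C_{\mu,\beta,N}\,\delta^{2N-2\beta-\mu}$ for every $p>0$ and all large $k$; since $t_k$ converges to a finite limit, choosing $p$ large enough is absurd. This use of the unbounded constant in $(f_6)$ is the essential missing idea in your argument.
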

	\begin{proof}
		We argue this proof by contradiction. Suppose \eqref{key1} doesn't hold.  Then  for given any $k \in \mb N,$ there exists  $t_k>0$ such that
		\begin{equation}\label{kc-PScond0}
			\begin{split}
				&\max_{t\in[0,\infty)} \mc E(t\mc M_k) = \mc E(t_k\mc M_k) \geq \frac{1}{2N } \left( \frac{{2N-2\beta-\mu}}{2N}\cdot\frac{\alpha_N}{\al_0}\right)^{N-1}.
			\end{split}
		\end{equation}
		Since  by  $(f_1)$, $F(x,h(t_k\mc M_k))\geq 0$ for all $k\in\mathbb N$, using  \eqref{jin1},  \eqref{kc-PScond0}, Lemma \ref{L1}-$(h_5)$ and the fact that $\|\mc M_k\|_E=1$, we get
		\begin{equation}\label{kc-PScond2}
			t_k^N \geq \frac{1}{2} \left( \frac{{2N-2\beta-\mu}}{2N}\cdot\frac{\alpha_N}{\al_0}\right)^{N-1}>0.
		\end{equation}
		Also, from \eqref{kc-PScond0}, it follows that
		$\frac{d}{dt}(J(t\mc M_k))|_{t=t_k}=0.$
		Combining this with Lemma \ref{L1}-$(h_4)$,  we obtain
		\begin{align}\label{kc-PS-cond3}
			t_k^N &= \int_{\mathbb R^N} \left(\int_{\mathbb R^N} \frac{F(y,h(t_k\mc M_k))}{|y|^\beta|x-y|^{\mu}}dy\right)\frac{f(x,h(t_k\mc M_k))}{|x|^\beta}h'(t_k\mc M_k)t_k\mc M_k ~dx\notag\\
			&\geq\frac 12 \int_{B_{\delta/k}(0)}\frac{f(x,h(t_k\mc M_k))}{|x|^\beta}h(t_k\mc M_k)\left( \int_{B_{\delta/k}(0)}\frac{F(y,h(t_k\mc M_k))}{|y|^\beta|x-y|^\mu}~dy\right)~dx.
		\end{align}
		By \eqref{h-growth},                                                                                                                                                                                                                                                                                                                                                                                                                                                                                                                                                                                                                                                                                                                                                                                                                                                                                                                                                                                                                                                                                                                                                                                                                                                                                                                                                                                                                                                                                                                                                                                                                                                                                                                                                                                                                                                                                                                                                                                                                                                                                                                                                                                                                                                                                                                                                                                                                                                                                                                                                                                                                                                                                                                                                                                                                                                                                                                                                                                                                                                                                                                                                                                                                                                                                                                                                                                                                                                                                                                                                                                                                                                                                                                                                                                                                                                                                                                                                                                                                                                                                                                                                                                                                                                                                                                                                                                                                                                                                                                                                                                                                                                                                                                                                                                                                                                                                                                                                                                                                                                                                                                                                                                                                                                                                                                                                                                                                                                                                                                                                                                                                                                                                                                                                                                                                                                                                                                                                                                                                                                                                                                                                                                                                                                                                                                                                                                                                                                                                                                                                                                                                                                                                                                                                                                                                                                                                                                                                                                                                                                                                                                                                                                                                                                                                                                                                                                                                                                                                                                                                                                                                                                                                                                                                                                                                                                                                                                                                                                                                                                                                                                                                                                                                                                                                                                                                                                                                                                                                                                                                                                                                                                                                                              for each $p>0$ there exists a constant $R_p$ such that
		\[sf(x,s)F(x,s) \geq p \exp\left( 2\al_0|s|^{\frac{2N}{N-1}}\right),\; \text{whenever}\; s \geq R_p.\]
		The relation \eqref{kc-PScond2} gives that
		${t_k}\mc M_k \to+\infty \;\text{as}\; k \to+\infty$ in $B_{\de/k}(0).$ Then  by Lemma \ref{L1}-$(h_7)$, for any given $\e>0$, there exists $k_0\in\mb N$ such that for all $k\geq k_0,$ $$|h({t_k}\mc M_k)|^{\frac{2N}{N-1}}\geq ({t_k}\mc M_k)^{\frac{N}{N-1}} (2^{\frac{1}{N-1}}-\e).$$ Moreover,   Lemma \ref{L1}-$(h_8)$ yields that  $h({t_k}\mc M_k) \to+\infty \;\text{as}\; k \to+\infty$, uniformly   in $B_{\de/k}(0).$   Hence, we can choose $s_p\in \mb N$ such that  in $B_{\de/k}(0)$,  it holds that
		\[h(t_k\mc M_k) \geq R_p,\; \text{for all}\; k\geq s_p.\]
		{On the other hand, using the same idea as  in \cite{yang-JDE} (see  equation $(2.11)$), we can deduce
			\[\int_{B_{\delta/k}(0)}\int_{B_{\delta/k}(0)} \frac{~dxdy}{|x|^\beta|y|^\beta|x-y|^\mu} \geq C_{\mu,\beta, N} \left(\frac{\delta}{k}\right)^{{2N-2\beta-\mu}},\]
		Using all these above estimates, from \eqref{kc-PS-cond3}, for sufficiently large $p>0$ and sufficiently large $k\in \mb N,$
		we get 
		\begin{align}\label{bd}
			t_k^N&\geq \frac p2 \int_{B_{\delta/k}(0)}\int_{B_{\delta/k}(0)}\exp \left( {2 \al_0(h(t_k\mc M_k))^{\frac{2N}{N-1}}}\right) \frac{~dxdy}{|x|^\beta|y|^\beta|x-y|^\mu}\notag\\
			&\geq \frac p2 \int_{B_{\delta/k}(0)}\int_{B_{\delta/k}(0)}\exp \left( {2\al_0 ({t_k}\mc M_k)^{\frac{N}{N-1}} (2^{\frac{1}{N-1}}-\e)}\right) \frac{~dxdy}{|x|^\beta|y|^\beta|x-y|^\mu}\notag\\
			&=\frac p2 \exp \left( \log k\left(\frac{2\al_0 (2^{\frac{1}{N-1}}-\e)  (t_k)^{\frac{N}{N-1}}}{{\omega}_{N-1}^{\frac{1}{N-1}}}\right)+ 2\al_0 (2^{\frac{1}{N-1}}-\e) t_k^{\frac{N}{N-1}}a_k\right)\int_{B_{\delta/k}(0)}\int_{B_{\delta/k}(0)}\frac{~dxdy}{|x|^\beta|y|^\beta|x-y|^\mu}\notag\\
			&\geq\frac p2 \exp \left( \log k\left(\frac{2\al_0 (2^{\frac{1}{N-1}}-\e) t_k^{\frac{N}{N-1}}}{{\omega}_{N-1}^{\frac{1}{N-1}}}\right)+ 2\al_0 (2^{\frac{1}{N-1}}-\e) t_k^{\frac{N}{N-1}}a_k\right)C_{\mu,\beta,N}\left(\frac{\delta}{k}\right)^{{2N-2\beta-\mu}}\notag\\
			&=\frac p2 \exp \left( \log k\left[\left(\frac{2\al_0 (2^{\frac{1}{N-1}}-\e) t_k^{\frac{N}{N-1}}}{{\omega}_{N-1}^{\frac{1}{N-1}}}\right)-({2N-2\beta-\mu})\right]+ 2\al_0 (2^{\frac{1}{N-1}}-\e) t_K^{\frac{N}{N-1}}a_k\right)C_{\mu,\beta,N}{\delta}^{{2N-2\beta-\mu}}.
		\end{align} Therefore, we have 
		\begin{align*}
			1&\leq \frac p2 {\delta}^{{2N-2\beta-\mu}}C_{\mu,\beta,N}\\&\qquad\qquad\exp \left( \log k\left[\left(\frac{2\al_0 (2^{\frac{1}{N-1}}-\e) t_k^{\frac{N}{N-1}}}{{\omega}_{N-1}^{\frac{1}{N-1}}}\right)-({2N-2\beta-\mu})\right]+ 2\al_0 (2^{\frac{1}{N-1}}-\e) t_K^{\frac{N}{N-1}}a_k-N\log t_k\right), 
		\end{align*} 
		which implies that the sequence $\{t_k\}$ must be bounded. If not, then $t_k\to+\infty$, which implies that the right hand side of the last inequality goes to $\infty,$ as $k\to+\infty$, which is absurd.  So, up to a subsequence, still denoted by $\{t_k\}$, $t_k\to \tilde t $ as $k\to+\infty$, for some $\tilde t\in \mb R.$ Now we claim that as $k\to+\infty,$
		\begin{equation}\label{cnvt}
			t_k^{\frac{N}{N-1}} \to \frac{1}{2^{\frac{1}{N-1}}} \left( \frac{{2N-2\beta-\mu}}{2N}\cdot\frac{\alpha_N}{\al_0}\right).
		\end{equation} Indeed, if not, then there exists some $\eta>0$ such that for sufficiently large $k\in\mb N,$ we have 
		$$t_k^{\frac{N}{N-1} }\geq \eta+ \frac{1}{2^{\frac{1}{N-1}}} \left( \frac{{2N-2\beta-\mu}}{2N}\cdot\frac{\alpha_N}{\al_0}\right).$$ By plugging  the last relation in \eqref{bd}, and using the fact that $\{a_k\}$ is a bounded sequence of non-negative real numbers, we obtain  
		\begin{align*}
				t_k^N&\geq	\frac p2 C_{\mu,\beta,N}{\delta}^{{2N-2\beta-\mu}}\notag\\&\qquad\exp \left( \log k\left[\left(\frac{2\al_0 (2^{\frac{1}{N-1}}-\e) \left(\eta+ \frac{1}{2^{\frac{1}{N-1}}} \left( \frac{{2N-2\beta-\mu}}{2N}\frac{\alpha_N}{\al_0}\right)\right)}{{\omega}_{N-1}^{\frac{1}{N-1}}}\right)-({2N-2\beta-\mu})\right]\right)\end{align*}\begin{align*}
				&\geq \frac p2 C_{\mu,\beta,N}{\delta}^{{2N-2\beta-\mu}}\notag\\&\qquad\exp \left( \log k\left[\left(\frac{2\al_0 (2^{\frac{1}{N-1}}-\e)\eta}{{({2N-2\beta-\mu}){\omega}_{N-1}^{\frac{1}{N-1}}}} +\frac { \left( {2}^{\frac{1}{N-1}}-\e\right)}{{2}^{\frac{1}{N-1}}}\right)-1\right]({2N-2\beta-\mu})\right).
		\end{align*}}Now by choosing $\e>0$ sufficiently small, using the relation $\al_N=N\omega_{N-1}^{\frac{1}{N-1}}$, by a simple computation, we can ensure that $$\left(\frac{2\al_0 (2^{\frac{1}{N-1}}-\e)\eta}{{({2N-2\beta-\mu}){\omega}_{N-1}^{\frac{1}{N-1}}}} +\frac { \left( {2}^{\frac{1}{N-1}}-\e\right)}{{2}^{\frac{1}{N-1}}}\right)-1>0$$ and thus, we achieve
		\begin{align*}
			\tilde	t^N&\geq \frac p2 C_{\mu,\beta,N}{\delta}^{{2N-2\beta-\mu}}\exp \left( \log k\left[\left(\frac{2\al_0 (2^{\frac{1}{N-1}}-\e)\eta}{{({2N-2\beta-\mu}){\omega}_{N-1}^{\frac{1}{N-1}}}} +\frac { \left( {2}^{\frac{1}{N-1}}-\e\right)}{{2}^{\frac{1}{N-1}}}\right)-1\right]({2N-2\beta-\mu})\right)\\
			& \to+\infty
		\end{align*} as $k\to+\infty,$ which is a contradiction. Therefore, \eqref{cnvt} holds.
		Now letting limit  $\e\to 0^+$ in the relation \eqref{bd}, we get
		\begin{align*}
			t_k^N
			&\geq\frac p2 \exp \left( \log k\left[\left(\frac{2\al_0 2^{\frac{1}{N-1}} t_k^{\frac{N}{N-1}}}{{\omega}_{N-1}^{\frac{1}{N-1}}}\right)-({2N-2\beta-\mu})\right]+ 2\al_0 (2^{\frac{1}{N-1}}-\e) t_k^{\frac{N}{N-1}}a_k\right)C_{\mu,\beta,N}{\delta}^{{2N-2\beta-\mu}},
		\end{align*}
		which together with \eqref{kc-PScond2}, and the fact that $\{a_k\}$ is a non-negative bounded sequence yields that
		$$t_k^N\geq \frac p2 C_{\mu,\beta,N}{\delta}^{{2N-2\beta-\mu}}. $$ Taking $k\to+\infty $ in the last relation, and using \eqref{cnvt}, we deduce
		$$\frac{1}{2^{\frac{1}{N-1}}} \left( \frac{{2N-2\beta-\mu}}{2N}\cdot\frac{\alpha_N}{\al_0}\right)\geq \frac p2 C_{\mu,\beta,N}{\delta}^{{2N-2\beta-\mu}}, $$ which is a contradiction, since $p>0$ can be chosen arbitrarily. This completes the proof of the lemma.
	\end{proof}
	\begin{proof}[\bf Proof of Theorem \ref{T2}] Since $\mc E$ satisfies Lemma \ref{mpE},  by mountain pass theorem,   there exists a Cerami sequence $ \{\uhat w_k\}\text{ in } E (\subset W^{1,N}(\mb R^N))$ for $\mc E$ at level $\theta_*$, that is, 
		\begin{align}\label{cs0} \mc E(\uhat w_k) \to \theta_*; \; \text{and}\; (1+\|\uhat w_k\|_E)\mc E^\prime(\uhat w_k) \to 0 \text{\;\; in \;} E^* \text{\;\; as $k\to+\infty$}.
		\end{align} 
		Also, Lemma \ref{mpE}-$(ii)$  yields that $\theta_*>0.$ Now by Lemma \ref{lem712}, we have that $\{\uhat w_k\}$ is bounded in $W^{1,N}(\mb R^N)$. Hence, up to a subsequence, still denoted by $\{\uhat w_k\}$,   there exists some $\uhat w\in W^{1,N}(\mb R^N)$ such that $\uhat w_k\rightharpoonup \uhat w$ weakly in $W^{1,N}(\mb R^N)$  as $k\to+\infty$. { Now using  Lemma  \ref{wk-sol}$-$\ref{kc-ws},
we deduce that $\uhat w$ satisfies \eqref{weak} for all $v\in W^{1,N}(\mb R^N)$ since $C_c^\infty(\mb R^N)$ is dense in $W^{1,N}(\mb R^N)$.} So, $\uhat w$ is a weak solution to \eqref{pp}.\\
		Next, we  show that  $\uhat w$ is nontrivial.	Suppose on the contrary, the weak solution we obtain $\uhat w\in W^{1,N}(\mb R^N)$ is trivial, that is $\uhat w\equiv 0$.  Then, from Lemma  \ref{PS-ws}, we have
		\begin{align}\label{F}
			\int_{\mathbb R^N} \left(\int_{\mathbb R^N} \frac{ F(y,h(\uhat w_k))}{|y|^\beta|x-y|^{\mu}}dy\right)\frac{F(x,h(\uhat w_k))} {|x|^\beta}~dx  \to 0\; \text{as}\; k \to+\infty,
		\end{align} where we used the fact that $h(\uhat w_k)\to 0$ strongly in $L^{\frac{2N^2}{2N-2\beta-\mu}}(\mb R^N)$, as $\frac{2N^2}{2N-2\beta-\mu}>N.$
		Since $E$ is compactly embedded into $L^N(\mb R^N)$, we have $\ds\int_{\mb R^N} V(x)|\uhat w_k|^N dx\to 0$ as $k\to+\infty$. This together  with
		\eqref{F} imply  that \begin{align*} \theta_*= \displaystyle\lim_{k \to+\infty}\mc E (\uhat w_k)= \frac{1}{N}\displaystyle\lim_{k\to+\infty}  \int_{\mb R^N}  |\nabla \uhat w_k|^N,
		\end{align*} that is,
		$$\lim_{k\to+\infty}\|\nabla \uhat w_k\|_{L^N(\mb R^N)}^N = \theta_*N.$$ Therefore, there exists a real number $l\in( 0,1)$, and corresponding to that $l$, there exists $k_0\in\mathbb N$ such that 
		\begin{align}\label{inq}
			\|\nabla \uhat w_k\|_{L^N(\mb R^N)}^{\frac{N}{N-1}} < \frac{{2N-2\beta-\mu}}{2N2^{\frac{1}{N-1}}}\cdot \frac{\al_N}{\al_0} (1-l), \text{\;\; for all}\;\; k\geq k_0. 
		\end{align} 
		Next, we show that
		\begin{align}\label{aaa}
			\int_{\mathbb R^N} \left(\int_{\mathbb R^N} \frac{ F(y,h(\uhat w_k))}{|y|^\beta|x-y|^{\mu}}dy\right)\frac{f(x,h(\uhat w_k))} {|x|^\beta} h'(\uhat w_k)w_k~dx  \to 0 \;\text{as}\; k \to+\infty.
		\end{align}  By the assumptions, $(f_2)$-$(f_3)$, for any $\e>0$, $r\geq N$,  there exist constants $ C>0$, $\al>\al_0>0$  such that
		\begin{align} 
			|f(x,s)s| \le \e |s|^N + C(r,\e) |s|^r \left[\exp\left(\al|s|^{\frac{2N}{N-1}}\right)-S_{N-2}(\al, s^2)\right],\;\; \text{for all}\; (x,s)\in {\mathbb R^N} \times \mb R.\label{f}
		\end{align}
		Now  using $(f_5)$, Proposition \ref{HLS} with $t=s, \beta=\vartheta$, \eqref{f}, Lemma \ref{techn}, Lemma \ref{mp-inq},   Lemma \ref{L1}-$(h_4)$ and H\"older's inequality, we deduce
		{\begin{align}\label{wk-soln}
				&\int_{{\mathbb R^N}}\left( \int_{\mathbb R^N}\frac{F(y,h(\uhat w_k))}{|y|^\beta|x-y|^{\mu}}dy\right)\frac{f(x,h(\uhat w_k))}{|x|^\beta}h'(\uhat w_k)\uhat w_k dx\notag\\
				&\leq\ell\int_{{\mathbb R^N}}\left( \int_{\mathbb R^N}\frac{f(y,h(\uhat w_k)) h(\uhat w_k)}{|y|^\beta|x-y|^{\mu}}dy\right)\frac{f(x,h(\uhat w_k))}{|x|^\beta}h(w_k) dx\notag \\
				&\leq \ell C(N,\mu,\beta) \left(\int_{{\mathbb R^N}}|f(x,h(\uhat w_k))h(\uhat w_k)|^{\frac{2N}{{2N-2\beta-\mu}}}~dx\right)^{\frac{{2N-2\beta-\mu}}{N}}\notag\\
				&  \leq  C(\ell,N,\mu,\beta,\e) \Bigg[ \|h(\uhat w_k)\|_{L^{\frac{2N^2}{{2N-2\beta-\mu}}}({\mathbb R^N})}^{2N}+ \|h(\uhat w_k)\|_{L^{\frac{2Nrp'}{{2N-2\beta-\mu}}}({\mathbb R^N})}^{2r} \Bigg(\int_{ {\mathbb R^N}}\Bigg(\exp\left( \frac{2N\al p}{{2N-2\beta-\mu}}|h(\uhat w_k)|^{\frac{2N}{N-1}}\right)\notag\\&\qquad\qquad\qquad\qquad\qquad\qquad\qquad\qquad\qquad-S_{N-2}\left(\frac{2N\al p}{{2N-2\beta-\mu}}, |h(\uhat w_k)|^2\right)\Bigg)dx \Bigg)^{\frac{{2N-2\beta-\mu}}{2Np}}\Bigg] \notag\\
				&  \leq C(\ell,N,\mu,\beta,\e,r,p)\Bigg[ \|h(\uhat w_k)\|_{L^{\frac{2N^2}{{2N-2\beta-\mu}}}({\mathbb R^N})}^{2N}\notag\\&\qquad\qquad\qquad+ \|h(\uhat w_k)\|_{L^{\frac{2Nrp'}{{2N-2\beta-\mu}}}({\mathbb R^N})}^{2r} \Bigg(\int_{ {\mathbb R^N}}\Bigg(\exp\left( \frac{2N\al p2^{\frac{1}{N-1}}\|\nabla\uhat  w_k\|_{L^N(\mathbb R^N)}^{\frac{N}{N-1}}}{{2N-2\beta-\mu}}\left(\frac{|\uhat w_k|}{\|\nabla\uhat w_k\|_{L^N(\mathbb R^N)}}\right)^{\frac{N}{N-1}}\right)\notag\\&\qquad\quad\qquad\qquad\qquad\qquad-S_{N-2}\left(\frac{2N\al p 2^{\frac{1}{N-1}}\|\nabla \uhat w_k\|_{L^N{(\mb R^N)}}^{\frac{N}{N-1}}}{{2N-2\beta-\mu}}, \left(\frac{|\uhat w_k|}{\|\nabla\uhat w_k\|_{L^N(\mathbb R^N)}}\right)\right)\Bigg)dx \Bigg)^{\frac{{2N-2\beta-\mu}}{2Np}}\Bigg].
		\end{align} }
		\noi Recalling \eqref{inq} and by choosing $p>1$ sufficiently close to $1$ and choosing $\al>\al_0$, very close to $\al_0$, we can have 
		$$\frac{2N\al p 2^{\frac{1}{N-1}}\|\nabla\uhat w_k\|_{L^N{(\mb R^N)}}^{\frac{N}{N-1}}}{{2N-2\beta-\mu}}<p\al \frac{\al_N}{\al_0}(1-l)<\al_N.$$
		Therefore,  in  view of  Theorem \ref{TM-ineq} and using \eqref{nv2}, for sufficiently large  $k\in\mathbb N$, from \eqref{wk-soln}, we get
		$$\int_{{\mathbb R^N}}\left( \int_{\mathbb R^N}\frac{F(y,h(\uhat w_k))}{|y|^\beta|x-y|^{\mu}}dy\right)\frac{f(x,h(\uhat w_k))}{|x|^\beta}h'(\uhat w_k)\uhat w_k dx<C,$$ where the constant $C$ is independent of $k$.
		Thus, by employing Vitali's convergence theorem, we obtain
		\[\int_{{\mathbb R^N}}\left( \int_{\mathbb R^N}\frac{F(y,h(\uhat w_k))}{|y|^\beta|x-y|^{\mu}}dy\right)\frac{f(x,h(\uhat w_k))}{|x|^\beta}h'(\uhat w_k)\uhat w_k~dx \to 0 \;\text{as}\; k \to+\infty.\]
		Since, $\{\uhat w_k\}$ is a Cerami sequence for $\mc E$,  we have $\displaystyle\lim_{k\to+\infty}\langle \mc E'(\uhat w_k), \uhat w_k \rangle_E=0$ which together with \eqref{aaa} implies that $$\int_{\mb R^N}|\nabla\uhat w_k|^N dx+ \int_{\mb R^N} V(x)|h(\uhat w_k)|^N dx \to 0 \text {\;\; as \;\;} {k\to+\infty}.$$ Invoking this and \eqref{F}  in \eqref{cs0}, we obtain $\theta_*=\displaystyle\lim_{k \to+\infty}\mc E(\uhat w_k)=0 $ which  contradicts  the fact that $\theta_*>0$. Thus, $\uhat w\not\equiv 0.$ 	Since $\uhat w$ is a weak solution to \eqref{pp}, it satisfies \begin{align*}
			\int_{\mathbb R^N} |\nabla \uhat w|^{N-2}\nabla \uhat w\nabla v~dx &+\int_{\mb R^N} V(x)|h(\uhat w)|^{N-2}h(\uhat w) h'(\uhat w) v dx\\& = \int_{\mathbb R^N} \left(\int_{\mathbb R^N} \frac{F(y,h(\uhat w))}{|y|^\beta|x-y|^{\mu}}dy\right)\frac {f(x,h(\uhat w))}{|x|^\beta}h'(\uhat w)v~dx,
		\end{align*} 
		for all $v \in W^{1,N}(\mb R^N)$. 
		Now choose $v = \uhat w^-:=\max\{-\uhat w,0\}\in W^{1,N}(\mb R^N)$ in the last equation. Then   $(f_1)$, $(V_1)$ and Lemma \ref{L1}-$(h_5)$ yield that  $\|\uhat w^-\|\leq0$ and hence, $\uhat w^-=0$ a.e. in ${\mathbb R^N}$. Therefore, $\uhat w\geq 0$ a.e. in ${\mathbb R^N}$.
		Now we can rearrange the  equation $\eqref{pp}$ as
		{\[-\Delta_N \uhat w+V(x)|\uhat w|^{N-2}\uhat w=V(x)\left(|\uhat w|^{N-2}w- |h(\uhat w)|^{N-2}h(\uhat w)h'(\uhat w)\right)+\left(\int_{\mathbb R^N} \frac{F(y,h(\uhat w))}{|y|^\beta|x-y|^{\mu}}dy\right)\frac{f(x,h(\uhat w))}{|x|^\beta}h'(\uhat w).\]}
		\noi By $(V_1)$, Lemma \ref{L1}-$(h_3),(h_5)$ and $(f_1)$ we infer  that the right hand side of this last equation is non negative. We claim that $\uhat w>0$. Suppose not, then there exists at least one point, say  $x_*\in \mb R^N$ such that $\uhat w(x_*)=0$.   Now Lemma \ref{reg} yields that $\ds \int_{\mb R^N} \frac{F(y,h(\uhat w))}{|y|^\beta|x-y|^{\mu}}dy\in L^\infty({\mathbb R^N})$. So, using the fact that $h'(\uhat w)\leq1$, and by recalling standard regularity results for the elliptic equations,  we infer that  $\tilde w \in L_{loc}^\infty({\mathbb R^N})\cap C_{loc}^{0,\xi}({{\mathbb R^N}})$ for some $\xi \in (0,1)$. Therefore, strong maximum principle implies that $\uhat w=0$ in $B_r(x_*)\subset \mb R^N$ for all $r>0$. So, $\uhat w\equiv0$ in $\mb R^N$ and this  contradicts  the fact that $\uhat w$ is non trivial.   Thus, $\uhat w>0$ and consequently, $h(\uhat w)>0$ which is a positive solution to \eqref{pq}. Hence, the proof of  Theorem \ref{T2} is complete.
	\end{proof}
	\subsection{Non-compact case} Here we aim to prove Theorem \ref{T3}. So, we consider the problem \eqref{pq} and the transformed equation \eqref{pp}, for $\beta=0$. The potential function $V$ satisfies $(V_1)$ and $(V_2^\prime)$. The condition $(V_2^\prime)$ is a weaker condition compared to $(V_2)$ and  induces lack of compactness in the embedding from $W^{1,N}(\mb R^N)$ into $L^N(\mb R^N)$. Therefore, we need to carry out  technically involved analysis to ensure the existence of nontrivial positive solution to \eqref{pp}.  For the brevity, we still denote the energy functional associated to \eqref{pp} with $\beta=0$ as $J:W^{1,N}(\mb R^N)\to\mb R$, which is defined as
	\begin{align}\label{modj}J(w)=\frac 1N\int_{\mb R^N} |\nabla w|^N dx+\frac 1N \int_{\mb R^N} V(x)| h(w)|^N dx -\frac 12 \int_{{\mathbb R^N}}\left( \int_{{\mathbb R^N}}\frac{F(y,h(w))}{|x-y|^\mu} dy\right) {F(x,h(w))} dx.
	\end{align} Clearly $J$ follows mountain pass geometry (that is, Lemma \ref{lem7.1} and Lemma \ref{lem7.1.2} with $\beta=0$) near  $0$.\\   Let $\ds \Gamma=\{\gamma\in C([0,1],W^{1,N}({\mathbb R^N})):\gamma(0)=0,J(\gamma(1))<0\}$. Define the mountain pass  level
	\begin{align}\label{level1}\ds \theta_c:=\inf_{\gamma\in \Gamma}\max_{t\in[0,1]}J(\gamma(t)).\end{align} 
	Then   by the mountain pass theorem, there exists a Cerami sequence $\{ w_k\}\subset W^{1,N}({\mathbb R^N})$ for $J$ at level $\theta_c$, that is, as $k\to+\infty$
	\begin{align}\label{cs111} J( w_k) \to \theta_c; \; \text{and}\; (1+\| w_k\|)J^\prime( w_k) \to 0 \text{\;\; in \;} \left(W^{1,N}({\mathbb R^N})\right)^*.
	\end{align} 
	Also by Lemma \ref{lem7.1}, we get $\theta_c>0.$ Moreover Lemma \ref{lem712} ensures that $\{ w_k\}$ is bounded in $W^{1,N}(\mb R^N)$ and hence, up to a subsequence, still denoted by $\{ w_k\}$,     $ w_k\rightharpoonup w$ weakly in $W^{1,N}(\mb R^N)$  as $k\to+\infty$ for some $ w\in W^{1,N}(\mb R^N)$. Now Lemma  \ref{wk-sol}$-$\ref{kc-ws} yield that $w$ satisfies\eqref{weak} for all $v\in W^{1,N}(\mb R^N)$. So, $ w$ is a weak solution to \eqref{pp}.   Our next goal is to show that  $w$ is non-trivial and positive. For that, we construct the arguments in the next lemmas, by assuming that $w\equiv0$ and finally, we arrive at some contradiction.\\ 
	We begin with analyzing the critical mountain pass level associated with the following newly defined functional
	\begin{align}\label{jin}J_\infty(w)=\frac 1N\int_{\mb R^N} |\nabla w|^N dx+\frac 1N \int_{\mb R^N} V_\infty| w|^N dx -\frac 12 \int_{{\mathbb R^N}}\left( \int_{{\mathbb R^N}}\frac{F(y,h(w))}{|x-y|^\mu} dy\right) {F(x,h(w))} dx\end{align} in the next lemma.  There we  consider the following equivalent norm on $W^{1,N}(\mb R^N)$, still denoted by $\|\cdot\|,$ defined as $$\|v\|=\left(\int_{{\mathbb R^N}}|\nabla v|^Ndx+ V_\infty\int_{{\mathbb R^N}} |v|^N dx\right)^{\frac 1p},\text{\;\;\; for } v\in W^{1,N}(\mb R^N). $$ One can easily see that $J_\infty\in C^1(W^{1,N}(\mb R^N), \mb R)$. Also, we can show that $J_\infty$ satisfies the mountain pass geometry as similar to  Lemma \ref{lem7.1} and Lemma \ref{lem7.1.2} with some  minute modifications in proofs of both the lemmas. 
	
\noi	For some fixed $\de>0$, let us set $$\mc{\widehat {M}}_{k}(x,\delta):=\frac{\mc{\tilde {M}}_{k}(x,\delta)}{\|\mc{\tilde {M}}_{k}\|},$$ where $\{ \mc{\tilde {M}}_{k}\}$ is the sequence of the Moser functions as defined in \eqref{mos}. 
	\begin{lemma}\label{PS-level1}
		Let  the conditions in Theorem \ref{T3} hold. Then,  then 
			there exists some $k \in \mb N$ such that
		\begin{align}\label{key}\max_{t\in[0,\infty)} J_\infty(t\mc{ {\widehat {M}}}_k) < d_c:=\frac{1}{2N }\left( \frac{{2N-\mu}}{2N}\cdot\frac{\alpha_N}{\al_0}\right)^{N-1}.\end{align} 
	\end{lemma}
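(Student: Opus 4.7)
The plan is to argue by contradiction, mirroring very closely the proof of Lemma \ref{PS-level}, but now with the limiting functional $J_\infty$, the exponent $2N-\mu$ (rather than $2N-2\beta-\mu$) and the normalized Moser family $\widehat{\mathcal M}_k$. Suppose that, on the contrary, for every $k\in\mathbb{N}$ one has
\[
\max_{t\geq 0} J_\infty(t\widehat{\mathcal M}_k)\geq d_c.
\]
Because $J_\infty(0)=0$ and $J_\infty(t\widehat{\mathcal M}_k)\to-\infty$ as $t\to+\infty$ (from the mountain-pass geometry of $J_\infty$, which I would verify along the lines of Lemmas \ref{lem7.1}--\ref{lem7.1.2}), for each $k$ there exists $t_k>0$ with $J_\infty(t_k\widehat{\mathcal M}_k)=\max_{t\geq 0}J_\infty(t\widehat{\mathcal M}_k)$. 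Using $F\geq 0$ from $(f_1)$ together with $\|\widehat{\mathcal M}_k\|=1$, I get the lower bound $t_k^N\geq \frac{1}{2}\bigl(\tfrac{2N-\mu}{2N}\cdot\tfrac{\alpha_N}{\alpha_0}\bigr)^{N-1}$, the analogue of \eqref{kc-PScond2}.

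From $\tfrac{d}{dt}J_\infty(t\widehat{\mathcal M}_k)\big|_{t=t_k}=0$ and Lemma \ref{L1}-$(h_4)$, I obtain exactly as in \eqref{kc-PS-cond3} the inequality
\[
t_k^N\geq \tfrac12\int_{B_{\delta/k}(0)}\!\int_{B_{\delta/k}(0)}\frac{f(x,h(t_k\widehat{\mathcal M}_k))\,h(t_k\widehat{\mathcal M}_k)\,F(y,h(t_k\widehat{\mathcal M}_k))}{|x-y|^{\mu}}\,dxdy.
\]
Now I invoke $(f_6)$: for any $p>0$ there is $R_p$ with $sf(x,s)F(x,s)\geq p\exp(2\alpha_0|s|^{2N/(N-1)})$ for $s\geq R_p$. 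From the lower bound on $t_k$ and Lemma \ref{L1}-$(h_7)$-$(h_8)$ one checks $h(t_k\widehat{\mathcal M}_k)\to+\infty$ uniformly on $B_{\delta/k}(0)$ and $|h(t_k\widehat{\mathcal M}_k)|^{2N/(N-1)}\geq (t_k\widehat{\mathcal M}_k)^{N/(N-1)}(2^{1/(N-1)}-\varepsilon)$ for large $k$. Combining these with the explicit formula $\widehat{\mathcal M}_k^{N/(N-1)}=\omega_{N-1}^{-1/(N-1)}\log k+a_k$ on $B_{\delta/k}(0)$ (with $\{a_k\}$ bounded) and the double-integral estimate (specialized to $\beta=0$)
\[
\int_{B_{\delta/k}(0)}\!\int_{B_{\delta/k}(0)}\frac{dxdy}{|x-y|^{\mu}}\geq C_{\mu,N}\Bigl(\tfrac{\delta}{k}\Bigr)^{2N-\mu},
\]
I reach the exponential inequality analogous to \eqref{bd}, namely
\[
t_k^N\geq \tfrac{p}{2}C_{\mu,N}\delta^{2N-\mu}\exp\!\Bigl(\log k\Bigl[\tfrac{2\alpha_0(2^{1/(N-1)}-\varepsilon)t_k^{N/(N-1)}}{\omega_{N-1}^{1/(N-1)}}-(2N-\mu)\Bigr]+O(1)\Bigr).
\]

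From here the argument is the dichotomy already used in Lemma \ref{PS-level}. First, this estimate forces $\{t_k\}$ to remain bounded (otherwise the right-hand side blows up). Next, the same alternative argument (if $t_k^{N/(N-1)}\geq \eta+\tfrac{1}{2^{1/(N-1)}}\tfrac{2N-\mu}{2N}\tfrac{\alpha_N}{\alpha_0}$ for some fixed $\eta>0$, the exponent in brackets stays positive, contradicting boundedness of $t_k^N$) pins down
\[
t_k^{N/(N-1)}\longrightarrow \tfrac{1}{2^{1/(N-1)}}\Bigl(\tfrac{2N-\mu}{2N}\cdot\tfrac{\alpha_N}{\alpha_0}\Bigr).
\]
Sending $\varepsilon\to 0^+$ and using boundedness of $\{a_k\}$ in the exponential inequality yields $t_k^N\geq \tfrac{p}{2}C_{\mu,N}\delta^{2N-\mu}$, and passing to the limit gives
\[
\tfrac{1}{2}\Bigl(\tfrac{2N-\mu}{2N}\cdot\tfrac{\alpha_N}{\alpha_0}\Bigr)^{N-1}\geq \tfrac{p}{2}C_{\mu,N}\delta^{2N-\mu},
\]
which is absurd since $p$ was arbitrary. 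The bulk of the work is essentially mechanical once Lemma \ref{PS-level} is in hand; the main point requiring care is to verify that the normalization of $\widehat{\mathcal M}_k$ with respect to the norm associated with $V_\infty$ still yields the $O(1/\log k)$ remainder terms, so that $a_k$ remains bounded and does not spoil the exponential asymptotics. The strict inequality $<$ then follows since the alternative $\geq$ for all $k$ was ruled out.
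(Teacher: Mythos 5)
Your proposal is correct and follows essentially the same route as the paper: the authors' own proof of Lemma \ref{PS-level1} simply states that it proceeds as in Lemma \ref{PS-level} with $\beta=0$, which is precisely the adaptation you carry out (contradiction hypothesis, the lower bound $t_k^N\geq\frac12(\tfrac{2N-\mu}{2N}\tfrac{\alpha_N}{\alpha_0})^{N-1}$, the use of $(f_6)$ and the Moser functions, boundedness and identification of the limit of $t_k$, and the final contradiction via arbitrariness of $p$). Your remark about checking that the $V_\infty$-normalization still yields a bounded sequence $\{a_k\}$ is the right point of care and is consistent with the paper's setup.
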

	\begin{proof}
The proof of the lemma follows  in a similar manner as in the proof of Lemma \ref{PS-level} by taking $\beta=0$.
			\end{proof}
	\begin{remark}
		The above lemma gives that $\theta_c<d_c:=\frac{1}{2N }\left( \frac{{2N-\mu}}{2N}\cdot\frac{\alpha_N}{\al_0}\right)^{N-1}$, where $\theta_c$ is defined in \eqref{level1}. Indeed, using  $(V_2^\prime)$  we get that $J(w)\leq J_\infty (w)$ for all $w\in W^{1,N}(\mb R^N)$. Then by using Lemma \ref{PS-level} we get the desired  estimate.
	\end{remark}
		\noi In the next lemma, we show the non-vanishing behaviour of the Cerami sequence $\{w_k\}$, which is defined  in \eqref{cs111}.
	\begin{lemma}\label{non-van}
		Let  the conditions in Theorem \ref{T3} hold.	Then, there exist positive constants $b,R$ and a sequence $\{z_k\}\subset \mb R^N$ such that \begin{align}\label{nv}
			\lim_{k\to+\infty}\int_{B_R(z_k)} |h(w_k)|^N\geq b>0.
		\end{align} 
	\end{lemma}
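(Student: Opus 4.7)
I argue by contradiction using Lions' concentration-compactness principle. Suppose the conclusion fails, so that for every $R>0$,
\[
\lim_{k\to+\infty} \sup_{z\in\mb R^N} \int_{B_R(z)} |h(w_k)|^N\, dx = 0.
\]
Since $|h(w_k)|\leq |w_k|$ by Lemma \ref{L1}-$(h_5)$ and $|\nabla h(w_k)|=h'(w_k)|\nabla w_k|\leq |\nabla w_k|$ by $(h_3)$, the boundedness of $\{w_k\}$ in $W^{1,N}(\mb R^N)$ (Lemma \ref{lem712}) transfers to $\{h(w_k)\}$. Lions' lemma then forces $h(w_k)\to 0$ strongly in $L^q(\mb R^N)$ for every $q\in(N,+\infty)$.

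The core step is to show that both nonlocal quantities
\[
\mc C_k := \int_{\mb R^N}\!\int_{\mb R^N} \frac{F(y,h(w_k))\,F(x,h(w_k))}{|x-y|^\mu}\,dy\,dx, \qquad
\mc D_k := \int_{\mb R^N}\!\int_{\mb R^N} \frac{F(y,h(w_k))\,f(x,h(w_k))\,h(w_k)(x)}{|x-y|^\mu}\,dy\,dx
\]
tend to zero. By Proposition \ref{HLS} with $\vartheta=\beta=0$ it suffices to estimate $\|F(\cdot,h(w_k))\|_{L^{2N/(2N-\mu)}}$ and $\|f(\cdot,h(w_k))h(w_k)\|_{L^{2N/(2N-\mu)}}$. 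Using \eqref{k1} (and its analogue for $f(x,s)s$ produced from $(f_2)$--$(f_3)$) with some $r>N$, each such norm decomposes into a polynomial piece, whose $L^{2N^2/(2N-\mu)}$ contribution vanishes since $2N^2/(2N-\mu)>N$, plus an exponential piece. Applying H\"older with exponents $(p,p')$ for $p>1$ close to $1$, and then Lemma \ref{mp-inq} and Lemma \ref{techn}, reduces the exponential piece to a Trudinger-Moser integral of the form $\int_{\mb R^N}[\exp(\lambda|w_k|^{N/(N-1)})-S_{N-2}(\lambda,|w_k|)]\,dx$ with $\lambda=\tfrac{2Np\alpha\cdot 2^{1/(N-1)}}{2N-\mu}$ for some $\alpha>\alpha_0$ close to $\alpha_0$. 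Theorem \ref{TM-ineq} yields a uniform bound once $\lambda\|\nabla w_k\|_{L^N}^{N/(N-1)}<\alpha_N$, and the vanishing polynomial prefactor $\|h(w_k)\|_{L^s}^{2r}$ ($s>N$) in the H\"older split then drives the whole exponential part to zero.

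With $\mc C_k,\mc D_k\to 0$ in hand, I plug $v_k:=h(w_k)/h'(w_k)\in W^{1,N}(\mb R^N)$ into $\langle J'(w_k),v_k\rangle = o(1)$ exactly as in \eqref{1.0}, obtaining
\[
\int_{\mb R^N}\!\left(1+\frac{2^{N-1}|h(w_k)|^N}{1+2^{N-1}|h(w_k)|^N}\right)|\nabla w_k|^N dx + \int_{\mb R^N} V(x)|h(w_k)|^N dx = \mc D_k + o(1) = o(1),
\]
so that $\|\nabla w_k\|_{L^N}\to 0$ and $\int V(x)|h(w_k)|^N dx \to 0$. Plugging these back into $J(w_k)\to\theta_c$ together with $\mc C_k\to 0$ gives $\theta_c=0$, contradicting $\theta_c\geq\tau_\beta>0$ from Lemma \ref{lem7.1}. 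This yields the desired non-vanishing.

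The main technical obstacle is verifying the subcritical Trudinger-Moser condition $\lambda\|\nabla w_k\|_{L^N}^{N/(N-1)}<\alpha_N$ uniformly in $k$. This is extracted from the sharp level estimate $\theta_c<d_c$ (Lemma \ref{PS-level1}) combined with the Ambrosetti-Rabinowitz condition $(f_5)$ and the Cerami identity: together they produce an a priori bound on $\|\nabla w_k\|_{L^N}^N$ lying strictly below the threshold $Nd_c=\tfrac{1}{2}\bigl(\tfrac{2N-\mu}{2N}\tfrac{\alpha_N}{\alpha_0}\bigr)^{N-1}$, whence, after selecting $p$ close to $1$ and $\alpha$ close to $\alpha_0$, the required strict inequality $\lambda\|\nabla w_k\|_{L^N}^{N/(N-1)}<\alpha_N$ follows.
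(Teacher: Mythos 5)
Your overall architecture coincides with the paper's: argue by contradiction, use Lions' lemma to get $h(w_k)\to 0$ in $L^q(\mb R^N)$ for all $q\in(N,\infty)$, show both nonlocal terms vanish, and conclude $\theta_c=0$, contradicting $\theta_c>0$. The gap is in your justification of the Trudinger--Moser subcriticality condition $\lambda\|\nabla w_k\|_{L^N}^{N/(N-1)}<\al_N$, which you correctly identify as the crux. You claim it follows from $\theta_c<d_c$ ``combined with the Ambrosetti--Rabinowitz condition $(f_5)$ and the Cerami identity.'' But that computation (it is exactly \eqref{bdd0}) only yields $\left(\tfrac1N-\tfrac1\ell\right)\|\nabla w_k\|_{L^N}^N\leq \theta_c+o(1)$, i.e. $\|\nabla w_k\|_{L^N}^N\leq \tfrac{N\ell}{\ell-N}\,\theta_c+o(1)$. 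Since $\tfrac{\ell}{\ell-N}>1$ (and is large when $\ell$ is close to $N$), this bound need not lie below the threshold $Nd_c=\tfrac12\bigl(\tfrac{2N-\mu}{2N}\tfrac{\al_N}{\al_0}\bigr)^{N-1}$ even though $\theta_c<d_c$; one would need $\theta_c<\tfrac{\ell-N}{\ell}d_c$, which is not what Lemma \ref{PS-level1} provides. So the required strict inequality does not follow by your route.

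The paper gets the sharp bound by a different ordering, and the ordering is essential. It first disposes of $\mc C_k$ (the $F$--$F$ convolution) \emph{without} any Trudinger--Moser input, using $(f_2)$, the a.e. convergence $h(w_k)\to 0$, and the strong convergence $h(w_k)\to 0$ in $L^{\frac{2N^2}{2N-\mu}}(\mb R^N)$ together with \eqref{k2}. Only then does it read off from $J(w_k)\to\theta_c$, $\mc C_k\to 0$ and the nonnegativity of $\int_{\mb R^N}V(x)|h(w_k)|^N dx$ the identity $\theta_c=\tfrac1N\lim_k\bigl(\|\nabla w_k\|_{L^N(\mb R^N)}^N+\int_{\mb R^N}V(x)|h(w_k)|^Ndx\bigr)\geq\tfrac1N\lim_k\|\nabla w_k\|_{L^N(\mb R^N)}^N$, giving $\lim_k\|\nabla w_k\|_{L^N(\mb R^N)}^N\leq N\theta_c<Nd_c$; this is what feeds the Trudinger--Moser argument for $\mc D_k$ (the analogue of \eqref{aaa}). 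In your version the logic is circular: you use the Trudinger--Moser bound to prove $\mc C_k\to 0$, while the only correct source of that bound is the vanishing of $\mc C_k$ itself. To repair the proof, handle $\mc C_k$ first by the subcritical estimate, extract the gradient bound from the energy level, and only then run your $p\to 1^+$, $\al\to\al_0^+$ argument for $\mc D_k$.
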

	\begin{proof}
		Suppose on the contrary, \eqref{nv} does not hold. Then we have
		\begin{align*}
			\lim_{k\to+\infty}\sup_{z\in\mb R^N}\int_{B_R(z)} |h(w_k)|^N=0.
		\end{align*}
		This, together with Lions compactness lemma (see Lemma {I.1} in \cite{Lions}) implies that, as $k\to+\infty$
		\begin{align}\label{nv2}
			h(w_k)\to 0 \text{\;\; strongly in } L^q(\mb R^N), \text{\;\; for all \;\;} q\in (N,\infty).
		\end{align}
		\noi Therefore, $h(w_k(x))\to 0$ a.e. in $\mb R^N.$ Using this together with  $(f_2)$, for sufficiently large $k\in\mathbb N$, we have 
		$$ |F(x, h(w_k))|\leq C |h( w_k)|^N $$  for some positive constant $C>0$.  So, by \eqref{k2} with $\beta=0$, we get 
		\begin{align}\label{F1}
			\int_{\mathbb R^N} \left(\int_{\mathbb R^N} \frac{ F(y,h(w_k))}{|x-y|^{\mu}}dy\right){F(x,h(w_k))} ~dx \leq C(N,\mu,\beta)\|h(w_k)\|_{L^{\frac{2N^2}{2N-\mu}}(\mb R^N)}^{2N} \to 0\; \text{as}\; k \to+\infty,
		\end{align} in the last line we used the fact that $h(w_k)\to 0$ strongly in $L^{\frac{2N^2}{2N-\mu}}(\mb R^N)$, since $\frac{2N^2}{2N-\mu}>N.$
		This yields that \begin{align*} \theta_c= \displaystyle\lim_{k \to+\infty}J(w_k)= \frac{1}{N}\displaystyle\lim_{k\to+\infty} \left( \|\nabla w_k\|_{L^N(\mb R^N)}^N+\int_\Om V(x) |h(w_k)|^N dx\right)\geq \frac 1N \displaystyle\lim_{k\to+\infty}  \|\nabla w_k\|_{L^N(\mb R^N)}^N.
		\end{align*} That is,
		$$\lim_{k\to+\infty}\|\nabla w_k\|_{L^N(\mb R^N)}^N \leq \theta_cN.$$
		Now using the similar arguments, we get \eqref{aaa} for $\beta=0$. Using this  and the fact that $\ds\lim_{k\to+\infty}\langle J(w_k), w_k\rangle=0$, we obtain
		$$\int_{\mb R^N}|\nabla w_k|^N dx+ \int_{\mb R^N} V(x)|h( w_k)|^N dx \to 0 \text {\;\; as \;\;} {k\to+\infty}.$$ Plugging this and \eqref{F1}  in \eqref{cs111}, it follows that $\theta_c=\displaystyle\lim_{k \to+\infty}J(w_k)=0 $ which is a contradiction to the fact that $\theta_c>0$. The proof is completed.
	\end{proof}
	\noi Next, we define the functional $I_\infty:W^{1,N}(\mb R^N)\to \mb R$ as 
	\begin{align}\label{iinft}
		I_\infty(w)=\frac 1N\int_{\mb R^N} |\nabla w|^N dx-\frac 1N \int_{\mb R^N} V_\infty|h (w)|^N dx -\frac 12 \int_{{\mathbb R^N}}\left( \int_{{\mathbb R^N}}\frac{F(y,h(w))}{|x-y|^\mu} dy\right) {F(x,h(w))} dx.
	\end{align} Clearly $I_\infty\in C^1\left(W^{1,N}(\mb R^N),\mb R\right).$
	\begin{lemma} 
		Let  the conditions in Theorem \ref{T3} hold. Then,	the sequence $\{w_k\}$ is a Palais-Smale sequence for $I_\infty$, where $\{w_k\}$ is defined as in \eqref{cs111}.
	\end{lemma}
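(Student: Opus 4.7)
The plan is to show asymptotic agreement between $I_\infty$ and $J$ along $\{w_k\}$, exploiting the key identity
\[
I_\infty(w) - J(w) = \frac{1}{N}\int_{\mb R^N}\bigl(V_\infty - V(x)\bigr)|h(w)|^N\, dx
\]
(reading \eqref{iinft} as the natural ``asymptotic'' counterpart of $J$ obtained by replacing $V(x)$ by $V_\infty$), together with the analogous identity for the G\^ateaux derivatives. Since $J(w_k)\to\theta_c$ and $(1+\|w_k\|)J'(w_k)\to 0$ by \eqref{cs111}, it will suffice to prove that both the above quantity and the corresponding dual-norm of the derivative difference tend to zero.

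By Lemma \ref{lem712}, $\{w_k\}$ is bounded in $W^{1,N}(\mb R^N)$, and the standing hypothesis $w\equiv 0$ gives $w_k\rightharpoonup 0$ weakly in $W^{1,N}(\mb R^N)$ and $w_k\to 0$ strongly in $L^q_{loc}(\mb R^N)$ for every $q\in[1,\infty)$. Fix $\epsilon>0$; by $(V_2^\prime)$ there exists $R>0$ with $|V(x)-V_\infty|<\epsilon$ for $|x|>R$. Splitting the integral above at $|x|=R$ and using Lemma \ref{L1}-$(h_5)$ to dominate $|h(w_k)|\leq |w_k|$, the inside piece is at most $\|V_\infty-V\|_{L^\infty(B_R(0))}\|w_k\|_{L^N(B_R(0))}^N\to 0$ by local strong convergence, and the outside piece is at most $\epsilon\|w_k\|^N \leq C\epsilon$. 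Letting $k\to\infty$ and then $\epsilon\to 0^+$ yields $I_\infty(w_k)\to \theta_c$.

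For the derivative, given any test $\phi\in W^{1,N}(\mb R^N)$ with $\|\phi\|\leq 1$, I would estimate
\[
\bigl|\langle I_\infty'(w_k)-J'(w_k),\phi\rangle\bigr|\leq \int_{\mb R^N}|V_\infty-V(x)|\,|h(w_k)|^{N-1}|h'(w_k)|\,|\phi|\, dx,
\]
then invoke Lemma \ref{L1}-$(h_3)$ to bound $h'(w_k)\leq 1$ and Lemma \ref{L1}-$(h_5)$ to replace $h(w_k)$ by $w_k$, split at $|x|=R$ as before, and close with H\"older's inequality (exponents $N/(N-1)$ and $N$) together with the continuous embedding $W^{1,N}(\mb R^N)\hookrightarrow L^N(\mb R^N)$. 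This produces a bound of the form $o_k(1)+C\epsilon\|\phi\|$ that is uniform in $\phi$ with $\|\phi\|\leq 1$, whence $\|I_\infty'(w_k)-J'(w_k)\|_{(W^{1,N}(\mb R^N))^*}\to 0$; combined with $J'(w_k)\to 0$ in the dual, this gives $I_\infty'(w_k)\to 0$.

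The main obstacle is that Lemma \ref{non-van} explicitly prevents any global vanishing of $\{h(w_k)\}$ in $L^q(\mb R^N)$; the concentration along translates $z_k$ keeps a non-trivial mass spread out at infinity. Consequently the $\epsilon$-smallness outside $B_R(0)$ must be extracted purely from $(V_2^\prime)$, not from any decay of $h(w_k)$, and the H\"older split must be arranged so that the uniformity in $\phi$ survives. This is handled precisely by the global bounds $\|h(w_k)\|_{L^N(\mb R^N)}\leq\|w_k\|_{L^N(\mb R^N)}\leq C$ coming from Lemma \ref{L1}-$(h_5)$ and the Sobolev embedding, which are independent of where the concentration sits.
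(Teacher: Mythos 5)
Your proposal is correct and follows essentially the same route as the paper's own proof: both reduce the claim to the identity $I_\infty(w_k)-J(w_k)=\frac1N\int_{\mb R^N}(V_\infty-V(x))|h(w_k)|^N\,dx$ (and its analogue for the derivatives), split the integral at a radius furnished by $(V_2^\prime)$, kill the inner piece via the local strong convergence $w_k\to 0$ in $L^N_{loc}$ coming from the standing assumption $w\equiv 0$, and bound the outer piece by $\e$ times the uniform bound on $\|h(w_k)\|_{L^N}$ from Lemma \ref{lem712} and Lemma \ref{L1}-$(h_5)$. Your handling of the order of limits ($k\to\infty$ first, then $\e\to 0^+$) is in fact slightly more careful than the paper's ``$\leq o_k(1)$'' shorthand.
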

	\begin{proof}
		By $(V_2^\prime)$, for any given $\e>0$, there exists a constant $l>0$ such that for all $|x|\ge l$, we have
		$$|V(x)-V_\infty|<\e,$$ which implies 
		\begin{align*}
			|I_\infty(w_k)-J(w_k)|&=\frac 1N \int_{B_C(0)}|V(x)-V_\infty||h (w_k)|^N dx+\frac 1N \int_{\mb R^N\setminus B_r(0)}|V(x)-V_\infty||h (w_k)|^N dx\\
			&\leq \frac 1N \max_{x\in B_l(0)} |V(x)-V_\infty|\int_{B_l(0)}|h (w_k)|^N dx+\frac 1N \e \int_{\mb R^N\setminus B_l(0)}|h (w_k)|^N dx\\&\leq o_k(1),
		\end{align*}where in the last inequality we used the fact that  the embedding $W^{1,N}(B_l(0))\hookrightarrow\hookrightarrow L^N_{loc}(B_l(0))$ is compact. Thus $$I_\infty(w)\to \theta _c \text{\;\; as \;\;} k\to+\infty.$$ Arguing similarly, we get
		\begin{align*}\sup_{\zeta \in W^{1,N}(\mb R^N),\; \|\zeta\|\leq 1}|\langle I'_\infty(w_k)-J'(w_k), \zeta\rangle|&=\sup_{ \zeta \in W^{1,N}(\mb R^N),\; \|\zeta\|\leq 1}\left |\int_{\mb R^N} (V(x)-V_\infty)|h(w_k)|^{N-2}h(w_k)h'(w_k) \zeta dx \right|\\&=o_k(1). 
		\end{align*} Hence $I'_\infty\to 0$ in $\left( W^{1,N}(\mb R^N)\right)^*$ as $k\to+\infty$. This proves the lemma. 
	\end{proof}
	\noi Now we define ${\tilde w_k}(x)=w_k(x+z_k)$, where the sequences $\{z_k\}$ and $\{w_k\}$ are defined in Lemma \ref{non-van} and in \eqref{cs111}, respectively. Then  $\{{\tilde w_k}\}$ is bounded in $W^{1,N}(\mb R^N)$ and hence, there is some $\tilde w$ such that up to a subsequence,  ${ \tilde w_k}\rightharpoonup \tilde w$ as $k\to+\infty$. Also,  it can easily be computed that,  as $k\to+\infty$,
	$$I_\infty({\tilde w_k})=I_\infty({w_k})\to \theta_c\text{\;\;\; and \;\;} I'_\infty({\tilde w_k})\to 0 \text{\;\;\; in \;\;} \left(W^{1,N}(\mb R^N)\right)^*.$$ Therefore, following the previous arguments, we get that $\tilde w$ is a critical point of the functional $I_\infty$.
	Thus, using Lemma \ref{non-van} and Lemma \ref{L1}-$(h_5)$, we get
	{\[\int_{B_R(0)}|\tilde{w}|^N dx=\displaystyle \lim_{k\to+\infty}\int_{B_R(0)}|\tilde{w}|^N dx=\displaystyle \lim_{k\to+\infty}\int_{B_R(z_k)}|{w}|^N dx\ge\displaystyle \lim_{k\to+\infty}\int_{B_R(z_k)}|h({w})|^N dx\geq b>0.\]}
	Hence, $\tilde w \not \equiv 0$.\\
	{ \begin{lemma}\label{last}
			Suppose  the conditions in Theorem \ref{T3} hold. Let $\theta _\infty:=\displaystyle\inf_{\gamma\in \Gamma_\infty}\max_{t\in[0,1]} I_\infty(\gamma(t))$ and $\ds \Gamma_\infty:=\{\gamma\in C([0,1],W^{1,N}({\mathbb R^N})):\gamma(0)=0,\,I_\infty(\gamma(1))<0\}.$ Then, \begin{align}\label{comp}
				\theta_\infty\leq I_\infty(\tilde w)\leq \theta_c,\end{align} where $\theta _c$ is defined in \eqref{level1}.
	\end{lemma}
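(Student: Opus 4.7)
The plan is to split the sandwich $\theta_\infty \le I_\infty(\tilde w) \le \theta_c$ into two inequalities and handle them by distinct mechanisms. The right inequality follows from a Fatou-type argument applied to the Palais--Smale sequence $\{\tilde w_k\}$ for $I_\infty$, while the left inequality is obtained by exhibiting $\tilde w$ as the summit of an explicit admissible path in $\Gamma_\infty$ constructed by fibering along the ray $t\mapsto t\tilde w$.

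For $I_\infty(\tilde w)\le\theta_c$, I would borrow the trick already used in Lemma~\ref{lem712}: introduce the test function $\tilde v_k:=h(\tilde w_k)/h'(\tilde w_k)$, which by Lemma~\ref{L1}-$(h_4)$,$(h_5)$ is bounded in $W^{1,N}({\mathbb R^N})$ with $\|\tilde v_k\|\le 2\|\tilde w_k\|$, so that $\langle I_\infty'(\tilde w_k),\tilde v_k\rangle\to 0$. Writing
\[
\mathcal J_k:=I_\infty(\tilde w_k)-\frac{1}{2\ell}\langle I_\infty'(\tilde w_k),\tilde v_k\rangle\longrightarrow\theta_c,
\]
the Ambrosetti--Rabinowitz condition $(f_5)$ together with $(h_3)$-$(h_4)$ make the integrand of $\mathcal J_k$ pointwise non-negative (the local piece is controlled by the $N$-Dirichlet integral plus the $V_\infty|h(\tilde w_k)|^N$ term, while the nonlocal piece reduces, after using $\ell F\le f(s)s$, to a non-negative double integral). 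Applying Fatou's lemma to each non-negative integrand and weak lower semicontinuity of the $N$-Dirichlet integral yields
\[
I_\infty(\tilde w)-\frac{1}{2\ell}\langle I_\infty'(\tilde w),\tilde v\rangle\le\theta_c,
\]
and since $\tilde w$ is a critical point of $I_\infty$, the second term vanishes and the upper bound follows.

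For $\theta_\infty\le I_\infty(\tilde w)$, consider the fibering map $g(t):=I_\infty(t\tilde w)$ for $t\ge 0$. Then $g(0)=0$, $g(t)>0$ for small $t>0$ by the mountain-pass geometry of $I_\infty$ near the origin, and $g(t)\to-\infty$ as $t\to+\infty$ thanks to $(f_5)$ combined with Lemma~\ref{L1}-$(h_7)$, $(h_8)$. Because $\tilde w$ is a critical point, $g'(1)=\langle I_\infty'(\tilde w),\tilde w\rangle=0$. The crux is to show that $t=1$ is the \emph{unique} positive critical point of $g$ and a global maximum: the local part carries the monotonicity coming from the concavity of $h$ on $(0,\infty)$ (Lemma~\ref{L1}-$(h_9)$), while the nonlocal Choquard part is strictly decreasing in $t$ after combining $(f_4)$-$(f_5)$ with the inequality $\ell F(s)\le f(s)s$. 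Once this is established, $\max_{t\ge 0}g(t)=g(1)=I_\infty(\tilde w)$, and choosing $T>1$ with $I_\infty(T\tilde w)<0$, the path $\gamma(s):=sT\tilde w$ belongs to $\Gamma_\infty$ and satisfies
\[
\theta_\infty\le\max_{s\in[0,1]}I_\infty(\gamma(s))=\max_{t\in[0,T]}g(t)=I_\infty(\tilde w).
\]

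The principal obstacle will be the strict monotonicity of $t\mapsto g'(t)/t^{N-1}$: this step is significantly more delicate than in classical semilinear settings because of the interaction between the quasilinear modifier $h$ (whose growth is only half-power at infinity by Lemma~\ref{L1}-$(h_7)$, so that the term $\int V_\infty|h(t\tilde w)|^N$ does not scale as a pure power of $t$) and the double-integral Choquard-type nonlocality (which couples two copies of $F(h(t\tilde w))$ through $|x-y|^{-\mu}$). Extracting the required monotonicity asks for a careful separation of the ranges $\{t\tilde w\le 1\}$ and $\{t\tilde w>1\}$, using $(h_8)$ on each region, together with $(f_4)$-$(f_5)$ to control the nonlocal contribution. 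Should strict monotonicity of the fibering fail in some regime, an alternative route is to invoke the Pohozaev-type identity highlighted among the paper's main contributions, which provides an independent scaling constraint at the critical point $\tilde w$ and thereby pins down $t=1$ as the sole maximizer of $g$.
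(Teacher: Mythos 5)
Your argument for the upper bound $I_\infty(\tilde w)\le\theta_c$ is essentially sound and close in spirit to the paper's: the paper takes the simpler combination $I_\infty(\tilde w_k)-\frac1N\langle I_\infty'(\tilde w_k),\tilde w_k\rangle$, in which the gradient terms cancel outright and only the two manifestly non-negative integrands (non-negative by $(h_4)$ and by $(f_5)$ with $(h_4)$) survive, so that Fatou's lemma alone finishes the job. Your version with the multiplier $\frac{1}{2\ell}$ and the test function $h(\tilde w_k)/h'(\tilde w_k)$ also works, but note that the surviving gradient term carries the weight $\frac1N-\frac{1}{2\ell}\bigl(1+\tfrac{2^{N-1}|h(\tilde w_k)|^N}{1+2^{N-1}|h(\tilde w_k)|^N}\bigr)$, which depends on $\tilde w_k$; plain weak lower semicontinuity of the Dirichlet integral does not apply to it, and you must instead invoke the a.e.\ convergence of gradients from Lemma~\ref{wk-sol} and apply Fatou there too.

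The genuine gap is in the lower bound $\theta_\infty\le I_\infty(\tilde w)$. Your plan rests on showing that $t=1$ is the global maximizer of the straight-line fibering map $g(t)=I_\infty(t\tilde w)$, which you propose to deduce from strict monotonicity of $g'(t)/t^{N-1}$. None of the hypotheses $(f_1)$--$(f_6)$ is a Nehari-type monotonicity condition (e.g.\ $s\mapsto f(x,s)/s^{N-1}$ increasing); $(f_4)$ and $(f_5)$ are Ambrosetti--Rabinowitz growth conditions and do not control the sign of $\frac{d}{dt}\bigl[g'(t)/t^{N-1}\bigr]$, especially for the doubly nonlinear Choquard term $\iint F(h(t\tilde w))F(h(t\tilde w))|x-y|^{-\mu}$ coupled with the half-power growth of $h$. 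Your fallback --- ``invoke the Pohozaev identity to pin down $t=1$ as the sole maximizer of $g$'' --- does not work either, because Proposition~\ref{Pohzv} constrains the behaviour of $I_\infty$ along the \emph{dilation} family $\tilde w_t(x)=\tilde w(x/t)$ (for which it yields $\int_{\mb R^N}G(x,\tilde w)\,dx=0$ and hence $I_\infty(\tilde w_t)=\frac1N\|\nabla\tilde w\|_{L^N(\mb R^N)}^N=I_\infty(\tilde w)$ for every $t>0$, by the conformal invariance of the $N$-Dirichlet energy), not along the multiplicative ray $t\tilde w$. This is precisely why the paper abandons the straight-line path and builds a three-piece path in the style of do \'O: multiplicative scaling $s\mapsto s\tilde w_{t_1}$ up to the critical point, then the Pohozaev-flat dilation arc $\tilde w_t$ from $t_1$ to $t_2$, then multiplicative scaling $s\mapsto s\tilde w_{t_2}$ with $t_2$ large enough that \eqref{t2} forces the endpoint to negative energy. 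The maximum along that path is $I_\infty(\tilde w)$ without any monotonicity assumption on $f$. Without either importing such a monotonicity hypothesis or switching to the dilation-based path, your construction of an admissible $\gamma\in\Gamma_\infty$ whose summit is $I_\infty(\tilde w)$ does not go through.
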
}
	\begin{proof}
		First, we show that $I_\infty(\tilde w)\leq \theta_c$.  For that, using  Lemma \ref{L1}-$(h_4)$ and $(f_5)$, we obtain
		\begin{align*}\frac 1N {f(x,h(\tilde w_k))} h'(\tilde w_k) \tilde w_k-\frac 12{F(x,h(\tilde w_k))}&\geq \frac {1}{2N} {f(x,h(\tilde w_k))} h(\tilde w_k) -\frac 12{F(x,h(\tilde w_k))}\geq0.
		\end{align*} This together with Lemma \ref{L1}-$(h_8)$ and
		Fatou's lemma implies that
		\begin{align*}
			\theta_c&=\ds \lim \inf _{k\to+\infty}\left(I_\infty(\tilde w_k)-\frac{1}{N}\langle I_\infty'(\tilde w_k), \tilde w_k\rangle\right)\notag\\
			&=\ds \lim \inf _{k\to+\infty}\bigg(\int_{\mb R^N}\frac1N V_\infty \left(|h(\tilde w_k)|^N - |h(\tilde w_k)|^{N-2}h(\tilde w_k)h'(\tilde w_k) \tilde w_k\right) dx\notag\\&\;\;\;\;\;\;+\int_{\mathbb R^N} \left(\int_{\mathbb R^N} \frac{F(y,h(\tilde w_k))}{|x-y|^{\mu}}dy \right)\left[\frac 1N {f(x,h(\tilde w_k))} h'(\tilde w_k) \tilde w_k-\frac 12{F(x,h(\tilde w_k))}\right] dx\bigg)\notag\\
			&\geq \int_{\mb R^N}\frac1N V_\infty \left(|h(\tilde w)|^N - |h(\tilde w)|^{N-2}h(w)h'(\tilde w) \tilde w\right) dx\notag\\&\;\;\;\;\;\;+\int_{\mathbb R^N} \left(\int_{\mathbb R^N} \frac{F(y,h(\tilde w))}{|x-y|^{\mu}}dy \right)\left[\frac 1N {f(x,h(\tilde w))} h'(\tilde w) \tilde w-\frac 12 {F(x,h(\tilde w))}\right] dx\notag\\
			&=I_\infty(\tilde w)-\frac{1}{N}\langle I_\infty'(\tilde w), \tilde w\rangle\\&=I_\infty(\tilde w),
		\end{align*}
		since  $\tilde w$ is a critical point of the functional $I_\infty$. Hence, $I_\infty(\tilde w)\leq \theta_c$.\\
		Next,  we use the approach as in \cite{do} for showing $I_\infty(\tilde w)\geq \theta_\infty$. For that,  we construct the following arguments to achieve a suitable path $\gamma:[0,1]\to W^{1,N}(\mb R^N)$ such that  
		\begin{equation}\label{path}
			\left\{
			\begin{split}
				\gamma(0)&=0,\;\;\; I_\infty(\gamma(1))<0,\;\;\; \tilde w\in \gamma([0,1]),\\
				&\ds\max_{t\in[0,1]}I_\infty(\gamma(t))	=I_\infty(\tilde w).
			\end{split}
			\right.
		\end{equation} So, we define
		\begin{equation*}
			{\tilde {w_t}}(x):=\left\{
			\begin{split}
				& \tilde w(x/t),\text{\;\;\;\; if \;\;} t>0,\\
				& 0,\; \text{\;\;\;\;\qquad\;\; if \;\;} t=0.
			\end{split}
			\right.
		\end{equation*} Our next aim is to choose the real numbers $0<t_1<1<t_2<s_0$ such that the path $\gamma$ defined by three pieces in the below is turned out to be our desired path:
		\begin{equation*}
			\gamma(s)=	\left\{
			\begin{split}
				&s{\tilde w_{t_1}}(x),\;\;\; \text{\;\;\;\; if \;\;} s\in[0,t_1],\\
				&s\tilde w_{t}(x),\;\;\; \text{\;\;\;\; if \;\;} s\in[t_1,t_2],\\
				&s\tilde w_{t_2}(x),\;\;\; \text{\;\;\;\; if \;\;} s\in[t_2,s_0].
			\end{split}
			\right.
		\end{equation*}
		Let us define the function $g:\mb R^N\times\mb R\to \mb R$ as $$g(x,s):=h'(s)\left[-V_\infty|h(s)|^{N-2}h(s)+\left(\int_{\mb R^N}\frac{F(y,h(s))}{|x-y|^\mu}dy\right){f(x,h(s))}\right].$$ Since $\tilde w$ is a critical point of $I_\infty,$  $\tilde w$ is solution to the following problem 
		\begin{align}\label{ppp}
			-\Delta_N w=g(x,w) \text{\;\;\; in \;\;} \mb R^N
		\end{align} and $\tilde w\not\equiv0$. Thus, we have $$\int_{\mb R^N} g(x,\tilde w)\tilde w dx= \|\nabla \tilde w\|_{L^N(\mb R^N)}^N>0.$$ Thus, there exists some $s_0>1$ such that  $$\int_{\mb R^N} g(x,s\tilde w)\tilde w dx>0, \;\;\;\forall s\in [1,s_0].$$ Set $\Phi(x, s):=g(x,s)/s^{N-1}.$ Then, it is easy to see that $\ds\lim_{|s|\to 0} \Phi(x,s)=-c<0$ uniformly in $x\in\mb R^N$ for some $c>0$ and $\Phi \in C(\mb R^N\times\mb R,\mb R)$ and \[\int_{\mb R^N} \Phi(x,s\tilde w)|\tilde w|^N dx>0, \;\;\;\forall s\in [1,s_0].\]
		On the other hand, 
		\begin{align}\label{mon}
			\frac{d}{ds}I_\infty(s\tilde w_t)&=\langle I'(s\tilde w_t), \tilde w_t\rangle\notag\\
			&=s^{N-1}\left(\|\nabla \tilde w_t\|_{L^N(\mb R^N)}^N-\int_{\mb R^N}g(x, s\tilde w_t)\tilde w_t dx\right)\notag\\
			&=s^{N-1}\left(\|\nabla \tilde w\|_{L^N(\mb R^N)}^N-t^N\int_{\mb R^N}\Phi(x, s\tilde w)|\tilde w|^N dx\right).
		\end{align} Therefore, we can choose $t_1\in (0,1)$ sufficiently small and $t_2>1$ sufficiently large such that from the last relation, we obtain
		\begin{align*}
			\|\nabla \tilde w\|_{L^N(\mb R^N)}^N-t_1^N\int_{\mb R^N}\Phi(x, s\tilde w)|\tilde w|^N dx>0, \text{\;\;\; for all \;\;} s\in [1,s_0]
		\end{align*} and 
		\begin{align}\label{t2}
			\|\nabla \tilde w\|_{L^N(\mb R^N)}^N-t_2^N\int_{\mb R^N}\Phi(x, s\tilde w)|\tilde w|^N dx\leq -\frac{1}{s_0-1}\|\nabla\tilde w\|_{L^N(\mb R^N)}^N, \text{\;\;\; for all \;\;} s\in [1,s_0].
		\end{align}
		Thus from \eqref{mon}, it follows that $I_\infty(s\tilde w_{t_1})$ is increasing on $[0,t_1]$
		and one can check that $I_\infty(s\tilde w_{t_1})$ takes its maximum value at $s=1$. 
		In addition, {from Theorem \ref{TM-ineq}, we have $f(\cdot,h(w))h(w),\; F(x,h(w)) \in L_{loc}^q({\mathbb R^N})$, for $1\leq q<\infty$.
			{ Since by Lemma \ref{reg} for $\beta=0$,  $\ds\int_{\mb R^N} \frac{F(y,h(w))}{|x-y|^{\mu}}dy\in L^\infty({\mathbb R^N})$, using  $h'(w)\leq1$, we obtain $$\left(\int_{\mathbb R^N} \frac{F(y,h(w))}{|x-y|^{\mu}}~dy \right){f(x,h(w))} h'(w)\in L_{loc}^q({\mathbb R^N}),$$ for $1 \leq q <\infty$. Now by recalling regularity results for the elliptic equations, from \eqref{ppp}, we infer that  $\tilde w \in L_{loc}^\infty({\mathbb R^N})\cap C_{loc}^{1,\xi}({{\mathbb R^N}})$ for some $\xi \in (0,1)$.}}
		Therefore, using Proposition \ref{Pohzv} by taking $p=N$ and $\beta=0$, we have $\ds\int_{\mb R^N} G(x,\tilde w)dx=0,$ where $G(x,t)=\ds\int_0^t g(x, s) ds$ is the primitive of $g$. Therefore, we get
		\[I_\infty(\tilde w_t)=I_\infty(\tilde w)=\frac 1N\|\nabla\tilde w\|_{L^N(\mb R^N)}^N. \] This together with \eqref{t2} yields that
		\begin{align*}
			I_\infty(s_0\tilde w_{t_2})&=I_\infty(\tilde w_{t_2})+\int_1^{s_0} \frac{d}{ds}I_\infty(s\tilde w_{t_2})ds\\
			&=\frac 1N\|\nabla\tilde w\|_{L^N(\mb R^N)}^N-\int_1^{s_0}\frac{1}{s_0-1}\|\nabla\tilde w\|_{L^N(\mb R^N)}^N ds\\
			&<\left(\frac 1N -1\right)\|\nabla\tilde w\|_{L^N(\mb R^N)}^N<0. 
		\end{align*}
		Thus we have achieved our desired path, which together with the definition of $\theta_\infty$ gives that
		\[\theta_\infty\leq \max_{t\in[0,1]} I_\infty (\gamma(t))=I_\infty(\tilde w).\] Hence, the relation  \eqref{comp} hold. This completes the lemma.
	\end{proof}
	\noi {\bf Proof of Theorem \ref{T3}}:  Consider the path $\gamma$ defined in \eqref{path}. Since $\gamma\in \Gamma _\infty\subset\Gamma,$ $ \gamma(t)(x)>0$ and by $(V_2^\prime)$, we have $V(x)\leq V_\infty$ with $V\not=V_\infty,$ from \eqref{level1}, \eqref{comp}, and \eqref{path}, we obtain
	\begin{align*}
		\theta_c&\leq\ds \max_{t\in[0,1]}J(\gamma(t))=J(\gamma(t_{max}))\\&<I_\infty(\gamma(t_{max}))\leq \max_{t\in[0,1]}I_\infty(\gamma(t))\\&=I_\infty(\tilde w)\leq \theta _c.
	\end{align*}This gives a contradiction. Thus $w$ is non-trivial weak solution to \eqref{pp}. Next, we prove that $w>0$ in ${\mathbb R^N}$.
	Since $w$ is a weak solution to \eqref{pp}, it satisfies \begin{align*}
		\int_{\mathbb R^N} |\nabla w|^{N-2}\nabla w\nabla \varphi~dx &+\int_{\mb R^N} V(x)|h(w)|^{N-2}h(w) h'(w)\varphi dx\\& = \int_{\mathbb R^N} \left(\int_{\mathbb R^N} \frac{F(y,h(w))}{|x-y|^{\mu}}dy\right){f(x,h(w))}h'(w)\varphi~dx,
	\end{align*} 
	for all $\varphi \in W^{1,N}({\mathbb R^N})$.
	Now in particular, taking $\varphi = w^-:=\max\{-w,0\}$ in the last equation, and using $(f_1)$, $(V_1)$ and Lemma \ref{L1}-$(h_5)$, we obtain $\|w^-\|\leq0$ which implies that $w^-=0$ a.e. in ${\mathbb R^N}$. Therefore, $w\geq 0$ a.e. in ${\mathbb R^N}$.
	Now rearranging the equation $\eqref{pp}$, we get
	{\[-\Delta_N w+V(x)|w|^{N-2}w=V(x)\left(|w|^{N-2}w- |h(w)|^{N-2}h(w)h'(w)\right)+\left(\int_{\mathbb R^N} \frac{F(y,h(w))}{|x-y|^{\mu}}dy\right)f(x,h(w))h'(w)\]}
	\noi and right hand side this equation is non negative, thanks to $(V_1)$, Lemma \ref{L1}-$(h_3),(h_5)$ and $(f_1)$. { Recall that in the proof of Lemma \ref{last}, we obtain $w \in L_{loc}^\infty({\mathbb R^N})\cap C_{loc}^{1,\xi}({{\mathbb R^N}})$ for some $\xi \in (0,1)$.} Therefore, using the strong maximum principle we get  $w\equiv0$ in $\mb R^N$. Hence, $h(w)>0$ which serves as a positive  solution to \eqref{pq}. This completes the proof of Theorem \ref{T3}. \qed
	\section{Appendix}
	\noi Here, we prove the generalized Pohozaev identity for the following quasilinear equation:
	\begin{equation*}\label{qq}
		\tag{$Q_{*}$}\left\{
		\begin{array}{l}
			-\Delta_p w+|h(w)|^{p-2}h(w)h'(w) =
			\left(\displaystyle\int_{\Om}\frac{F(y,h(w))}{|y|^\beta|x-y|^{\mu}}~dy\right)\frac{f(x,h(w))}{|x|^\beta}h'(w)\; \text{\;\;\; in}\;
			\mathbb R^N,
		\end{array}
		\right. 
	\end{equation*} 
	where $1<p\leq N,$  $0<\mu<N, \beta\geq 0,$ and $2\beta+\mu\leq N.$
	The nonlinearity  $f:\mb R^N\times \mb R\to \mb R$ is a continuous function and $F(x,s)=\int_{0}^s f(x,t)dt$ is the primitive of  $f$.  Similar results for the semilinear Choquard equations  with Laplacian operator, readers are referred to \cite{gao-yang-du, moroz4}.
	\begin{proposition}\label{Pohzv} Let $w\in W^{1,p}(\mb R^N)\cap L_{loc}^\infty(\mb R^N) $ be a weak solution to problem \eqref{qq}. Then
		\begin{align*}
			\frac{(N-p)}{p}\int_{\mb R^N} |\nabla w|^p\;dx+\frac{N}{p} \int_{\mb R^N} |h(w)|^p dx+\frac{(2N-\mu-2\beta)}{2}\int_{{\mathbb R^N}}\left(\int_{{\mathbb R^N}} \frac{F(y,h(w))}{|y|^\beta|x-y|^\mu}dy\right)\frac{F(x,h(w))}{|x|^\beta}dx=0.
		\end{align*}
	\end{proposition}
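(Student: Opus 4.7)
The plan is to establish the identity by testing equation \eqref{qq} against the Pohozaev multiplier $x\cdot\nabla w$ and integrating over $\mb R^N$. Because $w\in W^{1,p}(\mb R^N)\cap L^\infty_{\mathrm{loc}}(\mb R^N)$ is a weak solution, standard regularity theory for quasilinear elliptic equations upgrades $w$ to $C^{1,\xi}_{\mathrm{loc}}(\mb R^N)$ for some $\xi\in(0,1)$, which makes the pointwise computations with $\nabla w$ legitimate. Since $x\cdot\nabla w$ is not a priori in $W^{1,p}$, the identity is first derived against the localized multiplier $\eta_R(x)(x\cdot\nabla w)$, where $\eta_R$ is a smooth cutoff equal to $1$ on $B_R(0)$ and supported in $B_{2R}(0)$; the error terms carrying $\nabla\eta_R$ are dominated by $R^{-1}\int_{B_{2R}\setminus B_R}|\nabla w|^p\,dx$ and similar tail integrals, and they vanish as $R\to\infty$ by the integrability of $|\nabla w|^p$, $|h(w)|^p$, and of the doubly-weighted convolution, the latter via Proposition \ref{HLS}.

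Once the truncation is controlled, the three contributions are computed separately. For the $p$-Laplacian, a single integration by parts together with the identity $|\nabla w|^{p-2}\partial_i(|\nabla w|^2)=\tfrac{2}{p}\partial_i|\nabla w|^p$ yields
\begin{equation*}
\int_{\mb R^N}(-\Delta_p w)(x\cdot\nabla w)\,dx = -\frac{N-p}{p}\int_{\mb R^N}|\nabla w|^p\,dx.
\end{equation*}
For the lower-order absorption term, the chain rule $|h(w)|^{p-2}h(w)h'(w)\nabla w=\nabla(|h(w)|^p/p)$ turns its contribution into $\int x\cdot\nabla(|h(w)|^p/p)\,dx=-\tfrac{N}{p}\int|h(w)|^p\,dx$.

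The delicate step is the right-hand side. Setting $K(x):=\int_{\mb R^N}|y|^{-\beta}|x-y|^{-\mu}F(y,h(w))\,dy$ and using the chain rule $f(x,h(w))h'(w)\nabla w=\nabla_x F(x,h(w))$, integration by parts transfers $x\cdot\nabla$ onto $|x|^{-\beta}K(x)x$ and produces two pieces: the $\nabla\cdot(x/|x|^\beta)$ piece contributes $-(N-\beta)\mathcal{I}$, where $\mathcal{I}$ denotes the double integral in the statement, while the $x\cdot\nabla K$ piece, after differentiating $K$ and exploiting the symmetrization swap $x\leftrightarrow y$ via the identity $x\cdot(x-y)+y\cdot(y-x)=|x-y|^2$, collapses to $-\tfrac{\mu}{2}\mathcal{I}$. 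Assembling the three contributions and rearranging to one side then yields the stated identity (with an overall factor $\tfrac{2N-2\beta-\mu}{2}$ multiplying $\mathcal{I}$).

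The principal obstacle is the rigorous justification of the symmetrization: the intermediate integrand acquires the stronger singularity $|x-y|^{-\mu-2}$, so absolute convergence must be recovered by hand. This is done by bounding $|x\cdot(x-y)|\leq|x|\,|x-y|$ and splitting $|x|\leq|x-y|+|y|$ to redistribute the weights, so that Proposition \ref{HLS}, applied to $F(\cdot,h(w))\in L^q(\mb R^N)$ for an appropriate $q$ (whose integrability follows from the growth of $f$ and the embedding $W^{1,p}\cap L^\infty_{\mathrm{loc}}\subset L^q_{\mathrm{loc}}$ together with decay at infinity), yields a finite bound. Fubini's theorem then legalizes the $x\leftrightarrow y$ swap, and dominated convergence closes the limit $R\to\infty$.
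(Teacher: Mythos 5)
Your overall strategy coincides with the paper's: test against the Pohozaev multiplier $x\cdot\nabla w$ with a cutoff, compute the three contributions separately, and symmetrize the convolution term in $x\leftrightarrow y$ to produce the factor $\tfrac{2N-2\beta-\mu}{2}$. However, there is a genuine gap at the very first step. You claim that $C^{1,\xi}_{loc}$ regularity of $w$ "makes the pointwise computations legitimate," but the computation for the $p$-Laplacian term is not a computation with $\nabla w$ alone: the multiplier satisfies $\nabla(x\cdot\nabla w)=\nabla w+(D^2w)\,x$, and your identity $|\nabla w|^{p-2}\partial_i(|\nabla w|^2)=\tfrac{2}{p}\partial_i|\nabla w|^p$ likewise involves second derivatives of $w$. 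Weak solutions of degenerate quasilinear equations are in general \emph{not} in $W^{2,2}_{loc}$ (for $p>2$ the equation degenerates at critical points of $w$, and $C^{1,\xi}$ is the optimal regularity), so the integration by parts yielding $-\tfrac{N-p}{p}\int_{\mb R^N}|\nabla w|^p\,dx$ is not justified as written. This is precisely why the paper, following Guedda--Veron, first replaces the equation by the uniformly elliptic regularization $-\mathrm{div}\bigl((\e+|\nabla w_\e|^2)^{\frac{p-2}{2}}\nabla w_\e\bigr)+\cdots$, whose solutions $w_\e$ are $C^3_{loc}$, performs the Pohozaev computation there, and only then passes to the limit $\e\to 0$ using uniform $C^{1,\theta}$ bounds. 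Your proof needs this (or an equivalent difference-quotient argument) inserted before the pointwise manipulations; without it the argument fails exactly where the $p$-Laplacian structure matters.

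Two smaller points. First, there is an internal sign inconsistency in your treatment of the nonlocal term: after integrating by parts, the $x\cdot\nabla K$ piece gives $-\mu\iint\frac{F(x,h(w))F(y,h(w))}{|x|^\beta|y|^\beta}\frac{x\cdot(x-y)}{|x-y|^{\mu+2}}\,dx\,dy$, which upon symmetrization equals $-\tfrac{\mu}{2}\,\mathcal I$ \emph{inside} the overall minus sign, i.e.\ it contributes $+\tfrac{\mu}{2}\mathcal I$ so that the total is $-(N-\beta)\mathcal I+\tfrac{\mu}{2}\mathcal I=-\tfrac{2N-2\beta-\mu}{2}\mathcal I$; your stated intermediate value $-\tfrac{\mu}{2}\mathcal I$ combined with $-(N-\beta)\mathcal I$ would produce the wrong factor $\tfrac{2N-2\beta+\mu}{2}$, even though you then assert the correct final constant. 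Second, the paper's cutoff $\phi_{r,R}$ also excises a neighbourhood of the origin (it vanishes for $|x|<r/2$) to handle the singular weight $|x|^{-\beta}$ when differentiating $x/|x|^\beta$; your cutoff only truncates at infinity, which is repairable since $\beta<N/2$ makes the singularity integrable, but it should be addressed. Your sketch of how to restore absolute convergence for the $|x-y|^{-\mu-2}$ singularity before invoking Fubini is reasonable and is in fact more explicit than what the paper records.
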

	\begin{proof}
		{	Using the idea of \cite{gv}, let \(w_{\e}\) be the classical solution in \(C^{3}_{loc}(\mb R^N\setminus \{0\})\) of  }
		\begin{equation*}
			\left\{
			\begin{array}{l}
				-\text{div}(\e+|\nabla w_\e|^2)^{\frac{p-2}{2}} w_{\e} 
				+|h(w_\e)|^{p-2}h(w_\e)h'(w_\e)=
				\left(\displaystyle\int_{\Om}\frac{F(y, h(w_\e))}{|y|^\beta|x-y|^{\mu}}~dy\right)\frac{f(x, h(w_\e))}{|x|^\beta} h'(w_\e)\; \text{\;\;\; in}\;
				\mathbb R^N.
			\end{array}
			\right. 
		\end{equation*} 
		Then $w_{\e}$ is bounded in $C^{1,\theta}(\mb R^N\setminus \{0\})$ independently of $\e\in (0,1]$ and converges to $w$ in $C^{1,\tilde{\theta}}(\mb R^N\setminus \{0\})$ for any $\theta< \tilde\theta$ as $\e\ra 0$. 
		For 0<r<R, define $\phi_{r,R}\in C_{c}^{\infty}(\mathbb R^N)$ with $0\leq \phi_{r,R}\leq 1$,  $\phi_{r,R}(x)=0$ in $|x|<\frac{r}{2}$ and   $\phi_{r,R}(x)=1$ on $r<|x|< R$ with $|\na \phi_{r,R}(x)|<\frac{c}{R}$. By testing the equation against the function $\psi_{r, R}(x)= \phi_{r,R} (x \cdot \nabla w_\e(x))$ and integrate over $\mb R^N$, we have
		\begin{align*}\int_{\mb R^N} (\e+|\na w_{\e}|^2)^{\frac{p-2}{2}}\na w_{\e}\cdot \na \psi_{r,R} dx& + \int_{\mb R^N} |h(w_\e)|^{p-2}h(w_\e )h'(w_\e) \psi_{r,R} dx\\&=
			\int_{\mb R^N}\left(\int_{\mb R^N}\frac{F(y,h(w_\e))}{|y|^\beta|x-y|^{\mu}}dy\right) \frac{f(x,h(w_\e))}{|x|^\beta} h'(w_\e) \psi_{r,R}(x)  dx.\end{align*}
		Now we compute, for every $R>r>0$,
		\begin{align*}
			\int_{\mb R^N} |h(w_\e)|^{p-2} h(w_\e)h'(w_\e) \psi_{r,R}(x) dx & = \int_{\mb R^N} |h(w_\e)|^{p-2} h(w_\e) h'(w_{\e}) \phi_{r,R}(x) x. \na w_{\e}(x) dx \\
			&= \int_{\mb R^N} \phi_{r,R}(x) x. \na\left(\frac{|h(w_\e)|^p}{p}\right)(x) dx\\
			&= - \int_{\mathbb R^N} (N \phi_{r,R}(x)+ \la x\cdot \nabla \phi_{r,R}) \frac{|h(w_\e)|^p}{p} dx.
		\end{align*}
		Letting $\e\ra 0$, $r \ra 0$ and $R\ra \infty$, we obtain
		\[  \int_{\mb R^N} |h(w_\e)|^{p-2} h(w_{\e})h'(w_\e)\psi_{r,R}(x) dx \ra -\frac{N}{p} \int_{\mb R^N } |h(w)|^p dx,\]
		thanks to Lebesgue's dominated convergence Theorem.
		Next, we have 
		\begin{align*}
			&\int_{\mb R^N} |(\e+|\na w_{\e}(x)|^2)^{\frac{p-2}{2}} \na w_{\e}(x)\cdot \na \psi_{r,R}(x) dx\\ &= \int_{\mb R^N} (\e+|\na w_{\e}(x)|^2)^{\frac{p-2}{2}} \na w_{\e}(x) \na (\phi_{r,R}(x) x. \na w_{\e}(x)) dx \\
			&= \int_{\mb R^N} \phi_{r,R}(x) (\e+ |\na w_{\e}|^2)^{\frac{p-2}{2}} |\na w_{\e}(x)|^p - \frac{N}{p}\int_{\mb R^N} \phi_{r,R}(x) (\e+ |\na w_{\e}|^2)^{\frac{p}{2}} 
			- \int_{\mb R^N} x. \na \phi_{r,R}(x) (\e+ |\na w_{\e}|^2)^{\frac{p}{2}} dx.
		\end{align*}
		Similarly,	taking $\e\ra 0$ in the left hand side of the last relation,
		\begin{align*}
			\int_{\mb R^N} |(\e+|\na w_{\e}|^2)^{\frac{p-2}{2}} \na w_{\e}(x)\cdot \na \psi_{r,R}(x) dx\ra  -
			\int_{\mathbb R^N} ((N-p) \phi_{r,R}(x)+ x\cdot \nabla \phi_{r,R}( x)) \frac{|\na w(x)|^p}{p} dx
		\end{align*}
		and using this,
		\[\lim_{r \ra 0}\lim_{R\ra \infty}  \int_{\mb R^N}  |(\e+|\na w_{\e}(x)|^2)^{\frac{p-2}{2}} \na w_{\e}(x)\cdot \na \psi_{r,R}(x) dx = -\frac{N-p}{p} \int_{\mb R^N } |\na w|^p dx.\]
		Finally, we have 
		\begin{align*}
			&\int_{{\mathbb R^N}}\left(\int_{{\mathbb R^N}} \frac{F(y,h(w_\e))}{|y|^\beta|x-y|^\mu}dy\right)\frac{f(x,h(w_\e)) }{|x|^\beta} \psi_{r,R}(x)h'(w_\e)dx\\
			=& \frac{1}{2} \int_{\mathbb R^N}\int_{\mathbb R^N} \frac{F(y,h(w_\e))f(x,h(w_\e))h'(w_\e)\phi_{r,R}(x) x \cdot \nabla w_\e(x) }{|y|^\beta|x-y|^{\mu} |x|^\beta}dxdy\\&\qquad+ \frac{1}{2} \int_{\mathbb R^N}\int_{\mathbb R^N}\frac{F(x,h(w_\e))f(y,h(w_\e))h'(w_\e)\phi_{r,R}(y) y \cdot \nabla w_\e(y) }{|y|^\beta|x-y|^{\mu} |x|^\beta}dxdy\\
			=& -N \int_{\mathbb R^N}\int_{\mathbb R^N} \frac{F(x,h(w_\e))F(y,h(w_\e))\phi_{r,R}(x) }{|y|^\beta|x-y|^{\mu} |x|^\beta}dx dy -  \int_{\mathbb R^N}\int_{\mathbb R^N} \frac{F(x,h(w_\e))F(y,h(w_\e)) x\cdot \nabla \phi_{r,R}(x)  }{|y|^\beta|x-y|^{\mu} |x|^\beta}dx dy \\
			&\quad + \frac{\mu}{2} \int_{\mathbb R^N}\int_{\mathbb R^N} \frac{F(x,h(w_\e))F(y,h(w_\e)) }{|y|^\beta|x-y|^{\mu} |x|^\beta} \frac{(x-y)\cdot (x \phi_{r,R}(x)- y \phi_{r,R}(y))}{|x-y|^2}dx dy \\&\qquad+ \beta \int_{\mathbb R^N}\int_{\mathbb R^N}\frac{F(x,h(w_\e))F(y,h(w_\e))\phi_{r,R}(x) }{|y|^\beta|x-y|^{\mu} |x|^\beta}dxdy.
		\end{align*}
		Taking $r \ra 0$ and $R\ra \infty$, by Lebesgue's dominated convergence Theorem,
		\begin{align*}
			\int_{\mathbb R^N}\int_{\mathbb R^N} \frac{F(x,h(w_\e))F(y,h(w_\e)) }{|y|^\beta|x-y|^{\mu} |x|^\beta} \frac{(x-y)\cdot (x \phi_{r,R}(x)- y \phi_{r,R}(y))}{|x-y|^2}dx dy &\ra    \int_{\mathbb R^N}\int_{\mathbb R^N} \frac{F(x,h(w_\e))F(y,h(w_\e))}{|y|^\beta|x-y|^{\mu} |x|^\beta}dx dy;\\
			\int_{\mathbb R^N}\int_{\mathbb R^N} \frac{F(x,h(w_\e))F(y,h(w_\e)) x\cdot \nabla \phi_{r,R}(x)  }{|y|^\beta|x-y|^{\mu} |x|^\beta}dx dy &\ra 0.
		\end{align*}
		\noi Hence, combining the above estimates and taking $\e\ra 0$, we obtain
		\[\int_{{\mathbb R^N}}\left(\int_{{\mathbb R^N}} \frac{F(y,h(w_\e))}{|y|^\beta|x-y|^\mu}dy\right)\frac{f(x,h(w_\e))}{|x|^\beta}\psi_{r,R}(x) dx\\
		\ra  -\frac{2N-\mu-2\beta}{2} \int_{\mathbb R^N}\int_{\mathbb R^N} \frac{F(y,h(w))F(x,h(w))}{|y|^\beta|x-y|^{\mu} |x|^\beta}dxdy. \]
		This completes the proof pf the proposition. 
			\end{proof}
	\noindent{\bf Acknowledgements:} 
	The second author would like to thank the Science and Engineering Research Board, Department of Science and Technology,
	Government of India for the financial support under the grant\\
	MTR/2018/001267.                                                   
	
\end{document}